\title{The $\K$-theory of perfectoid rings}
\author{Benjamin Antieau, Akhil Mathew, and Matthew Morrow}
\date{\today}
\renewcommand{\phi}{\varphi}
\newcommand{\modfg}{\mathrm{Mod}^{\mathrm{fp}}}
\definecolor{todo}{rgb}{1,0,0}
\definecolor{conditional}{rgb}{0,1,0}
\definecolor{e-mail}{rgb}{0,.40,.80}
\definecolor{reference}{rgb}{.20,.60,.22}
\definecolor{mrnumber}{rgb}{.80,.40,0}
\definecolor{citation}{rgb}{0,.40,.80}
\let\oldmarginpar\marginpar
\renewcommand\marginpar[1]{\-\oldmarginpar[\raggedleft\footnotesize #1]%
{\raggedright\footnotesize #1}}
\newcommand{\Dscr}{\mathcal{D}}
\newcommand{\A}{\mathrm{A}}
\newcommand{\K}{\mathrm{K}}
\newcommand{\W}{\mathrm{W}}
\renewcommand{\mathbb}{\mathds}
\renewcommand{\AA}{\mathds{A}}
\newcommand{\FF}{\mathds{F}}
\newcommand{\ZZ}{\mathds{Z}}
\newcommand{\gr}{\mathrm{gr}}
\newcommand{\op}{\mathrm{op}}
\newcommand{\perf}{\mathrm{Perf}}
\newcommand{\fib}{\mathrm{fib}}
\newcommand{\spec}{\mathrm{Spec}}
\newcommand{\fun}{\mathrm{Fun}}
\newcommand{\catst}{\mathrm{Cat}_\infty^{\mathrm{perf}}}
\newcommand{\sF}{\mathcal{F}}
\newcommand{\G}{\mathrm{G}}
\newcommand{\xto}{\xrightarrow}
\renewcommand{\inf}{\mathrm{inf}}
\newcommand{\heart}{\heartsuit}
\renewcommand{\geq}{\geqslant}
\renewcommand{\leq}{\leqslant}
\newcommand{\KH}{\mathrm{KH}}
\newcommand{\Map}{\mathrm{Map}}
\newcommand{\Mod}{\mathrm{Mod}}
\newcommand{\Perf}{\mathrm{Perf}}
\DeclareMathOperator*{\holim}{holim}
\DeclareMathOperator*{\colim}{colim}
\newcommand{\Sch}{\mathrm{Sch}}
\DeclareMathOperator{\Spec}{Spec}
\newcommand{\we}{\simeq}
\newcommand{\iso}{\cong}
\theoremstyle{plain}
\newtheorem{theorem}{Theorem}[section]
\newtheorem*{theorem*}{Theorem}
\newtheorem{lemma}[theorem]{Lemma}
\newtheorem{proposition}[theorem]{Proposition}
\newtheorem{corollary}[theorem]{Corollary}
\newtheorem*{corollary*}{Corollary}
\theoremstyle{plain}
\theoremstyle{definition}
\newtheoremstyle{named}{}{}{\itshape}{}{\bfseries}{.}{.5em}{#1 \thmnote{#3}}
\theoremstyle{named}
\theoremstyle{definition}
\newtheorem{definition}[theorem]{Definition}
\newtheorem*{example*}{Example}
\newtheorem{question}[theorem]{Question}
\newtheorem*{question*}{Question}
\newtheorem{remark}[theorem]{Remark}
\newcommand{\bb}[1]{\mathbb{#1}}
\newcommand{\sub}[1]{{\mbox{\rm \scriptsize #1}}}
\newcommand{\quis}{\stackrel{\sim}{\to}}
\newcounter{MM}\setcounter{MM}{0}
\newcounter{BA}\setcounter{BA}{0}
\newcounter{AM}\setcounter{AM}{0}
\newcommand{\roi}{\mathcal O}
\begin{document}

\maketitle

\begin{abstract}
    \noindent
    We establish various properties of the $p$-adic algebraic $\K$-theory of smooth
    algebras over perfectoid rings living over perfectoid valuation rings. In
    particular, the $p$-adic $\K$-theory of such rings
    is homotopy invariant, and coincides with the $p$-adic $\K$-theory of the $p$-adic
    generic fibre in high degrees. In the case of smooth algebras over perfectoid valuation rings of
    mixed characteristic the latter isomorphism holds in all degrees and
    generalises a result of Nizio\l.
\end{abstract}

\section{Introduction}
In this note, we record some results concerning the $p$-adic $\K$-theory of
certain $p$-adic rings.  Our starting point is the following result.

\begin{theorem}[Quillen, Hiller \cite{Hiller81}, Kratzer \cite{Kra80}]\label{QHKthm}
If $A$ is a perfect $\mathbb{F}_p$-algebra, then $\K_i(A)$ is a $\mathbb{Z}[1/p]$-module for all $i > 0$. 
\end{theorem}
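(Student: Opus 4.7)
The plan is to combine two opposing features of the Frobenius $\phi: A \to A$, $x \mapsto x^p$. On the one hand, because $A$ is perfect, $\phi$ is a ring automorphism, so by functoriality the induced map $\phi_*: \K_i(A) \to \K_i(A)$ is an isomorphism in every degree. On the other hand, on the $\K$-theory of any $\mathbb{F}_p$-algebra, Frobenius agrees with the $p$-th Adams operation, which in positive degrees is (in a precise sense) divisible by a positive power of $p$. Combining the two will force $p$ to act invertibly on $\K_i(A)$ for every $i \geq 1$.

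Concretely, I would first invoke the $\lambda$-ring structure on the $\K$-theory of a commutative ring, providing Adams operations $\psi^k$ on $\K_*(R)$, constructed in the needed generality by Grayson, Hiller and Kratzer. When $A$ is of characteristic $p$, one has the identification $\phi_* = \psi^p$ on $\K_*(A)$: for line bundles $L$, the Frobenius pullback $\phi^* L$ has transition functions that are the $p$-th powers of those of $L$, and thus $\phi^* L = L^{\otimes p}$, which is how $\psi^p$ acts on $\Pic$; the identification extends to all of $\K$-theory via the splitting principle and the fact that $\psi^p$ is a natural ring endomorphism.

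The main input and principal obstacle is then the theorem of Hiller and Kratzer identifying the action of $\psi^p$ on $\K_n(R)$ for $n \geq 1$ with multiplication by $p^n$. Granting this, $\phi_*$ acts as $p^i$ on $\K_i(A)$ for $i \geq 1$; since $\phi_*$ is also an isomorphism, $p^i$ acts invertibly on $\K_i(A)$, and therefore $\K_i(A)$ is a $\mathbb{Z}[1/p]$-module. Rationally, the identity $\psi^p = p^n$ on $\K_n \otimes \mathbb{Q}$ is immediate from weight considerations on the $\gamma$-filtration, and already suffices to make $\K_i(A) \otimes \mathbb{Q}$ uniquely $p$-divisible; the integral refinement — removing $p$-torsion and ensuring $p$-divisibility on the nose — requires the more delicate nilpotence analysis of Adams operations modulo the $\gamma$-filtration carried out in Hiller's and Kratzer's papers, and is where the weight of the argument lies.
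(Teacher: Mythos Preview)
Your approach is the same as the paper's, which merely sketches that $\psi^p$ coincides with the Frobenius and is therefore an isomorphism when $A$ is perfect. One clarification: the precise integral input from Kratzer is not that $\psi^p$ acts as $p^n$ on $\K_n$ (that holds only rationally, as you note), but rather that $\psi^p = p\cdot\Phi^p$ for an additive operation $\Phi^p$ on $\K_n$ when $n\geq 1$; combined with $\psi^p=\phi_*$ being an isomorphism, this directly forces multiplication by $p$ to be bijective, without any filtration or nilpotence analysis.
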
 

\Cref{QHKthm} is proved using the action of the Adams operations on
$\K$-theory. In particular, one shows that $\psi^p$ is given by the Frobenius,
and therefore is an isomorphism. The mixed characteristic analog of a perfect
$\mathbb{F}_p$-algebra is a perfectoid ring, and many foundational results for
perfect $\mathbb{F}_p$-algebras can be generalized to perfectoid rings. We
begin by giving the following generalization of \Cref{QHKthm} to mixed
characteristic. In the statement, $\K(-;\FF_p)$ denotes the cofiber $\K(-)/p$ of
multiplication by $p$ on non-connective $\K$-theory $\K(-)$.

\begin{theorem}[\Cref{Invertingp}]
    If $\mathcal{O}$ is a perfectoid valuation ring and $A$ is a perfectoid
    $\mathcal{O}$-algebra,\footnote{For us, a ``perfectoid valuation ring'' is a
    valuation ring $\mathcal{O}$ which is simultaneously a perfectoid ring in the sense
    of~\cite[Def.~3.5]{BMS1}: $\mathcal{O}$ is $p$-adically complete, there is
    an element $\varpi$ in $\mathcal{O}$ such that $\varpi^p$ divides $p$, the Frobenius map on
    $\mathcal{O}/p$ is surjective, and the kernel of
    $\theta\colon\A_{\inf}(\mathcal{O})\rightarrow \mathcal{O}$
    is principal; such a valuation ring is $p$-adically complete and separated.
    Equivalently, it is either a perfect valuation ring of characteristic $p$,
    or its fraction field is a perfectoid field of characteristic $0$ having
    ring of integers $\mathcal{O}_{\sqrt{p\mathcal{O}}}$.
    A ``perfectoid $\mathcal{O}$-algebra'' is a perfectoid ring $A$
    equipped with the structure of a $\mathcal{O}$-algebra.}
then the map $\K(A; \mathbb{F}_p) \to \K( A[1/p]; \mathbb{F}_p)$ is
    $0$-truncated.\footnote{Recall that a map $f\colon M\rightarrow N$ of
    spectra is $d$-truncated if the fiber $\fib(f)$ satisfies $\pi_i\fib(f)=0$
    for $i>d$. Equivalently, $\pi_if\colon\pi_iX\rightarrow\pi_iY$ is an
    isomorphism for $i>d+1$ and an injection for $i=d+1$.}
\label{perfectoidk}
\end{theorem}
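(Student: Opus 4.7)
The strategy is to use Thomason--Trobaugh localization to identify the fiber of $\K(A;\FF_p)\to\K(A[\tfrac1p];\FF_p)$ with $\K$-theory with support on $V(p)$, and then to reduce to Theorem~\ref{QHKthm} by passing to the tilt $A^\flat$.

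First, by the Thomason--Trobaugh localization theorem, there is a fiber sequence of non-connective $\K$-spectra
\[
\K^{V(p)}(A)\to\K(A)\to\K(A[\tfrac{1}{p}]),
\]
where $\K^{V(p)}(A)$ denotes the $\K$-theory of the stable subcategory $\Perf^{V(p)}(A)\subseteq\Perf(A)$ of perfect complexes with $p$-power torsion cohomology. Smashing with $\SS/p$ preserves fiber sequences, so it suffices to show that $\K^{V(p)}(A;\FF_p)$ is $0$-truncated.

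Second, I would compare $A$ with its tilt $A^\flat$, which is a perfect $\FF_p$-algebra. Fix a pseudo-uniformizer $\varpi\in\mathcal{O}$ with $\varpi^p\mid p$ and a compatible lift $\varpi^\flat\in A^\flat$. The category $\Perf^{V(p)}(A)$ is thickly generated by Koszul complexes on the elements $\varpi^{1/p^n}$, and the perfectoid identification $A/\varpi\simeq A^\flat/\varpi^\flat$ suggests these correspond to Koszul complexes on $(\varpi^\flat)^{1/p^n}$ over $A^\flat$. The target is an equivalence
\[
\K^{V(p)}(A;\FF_p)\simeq\K^{V(\varpi^\flat)}(A^\flat;\FF_p),
\]
most cleanly obtained by routing through the cyclotomic trace: by Dundas--Goodwillie--McCarthy and its commutative-ring refinements, the trace $\K(-;\FF_p)\to\TC(-;\FF_p)$ is essentially an equivalence in the relevant range, and the Bhatt--Morrow--Scholze description of $\TC$ of perfectoid rings in terms of $\A_{\inf}$ is manifestly tilt-invariant.

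Finally, apply Theorem~\ref{QHKthm} to the perfect $\FF_p$-algebras $A^\flat$ and $A^\flat[1/\varpi^\flat]$: both have $\K$-theory mod $p$ that is $0$-truncated, so the Thomason--Trobaugh localization sequence $\K^{V(\varpi^\flat)}(A^\flat)\to\K(A^\flat)\to\K(A^\flat[1/\varpi^\flat])$ forces $\K^{V(\varpi^\flat)}(A^\flat;\FF_p)$ to be $0$-truncated as well, concluding the argument. The main obstacle is the tilting comparison at the level of $\K$-theory with support modulo $p$: although the associated $p$-adic derived categories tilt beautifully for perfectoid rings, this does not directly produce an equivalence of $\K$-theory spectra, and the cleanest route is through $\TC$ via the BMS framework, where the passage from $A$ to $A^\flat$ is transparent.
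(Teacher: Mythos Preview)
Your proposal has a genuine gap at its central step. You yourself flag the tilting comparison $\K^{V(p)}(A;\FF_p)\simeq\K^{V(\varpi^\flat)}(A^\flat;\FF_p)$ as ``the main obstacle,'' and the suggestion to route through $\TC$ does not close it. Dundas--Goodwillie--McCarthy is a \emph{relative} statement about nil-invariance of the fiber of $\K\to\TC$; it does not make $\K(-;\FF_p)\to\TC(-;\FF_p)$ an equivalence. Even granting that $\K(R;\ZZ_p)\to\TC(R;\ZZ_p)$ is a $(-1)$-truncated equivalence for $p$-henselian $R$, the comparison you need is between $\TC^{V(p)}(A;\FF_p)$ and $\TC^{V(\varpi^\flat)}(A^\flat;\FF_p)$. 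Since $\TC(A[1/p];\ZZ_p)=0$ but $\TC(A^\flat[1/\varpi^\flat];\ZZ_p)\neq 0$ (the latter is a nonzero perfect $\FF_p$-algebra), these supported $\TC$-spectra are $\TC(A;\ZZ_p)$ on one side and a genuine fiber on the other; the BMS formulas express both sides in terms of $\A_{\inf}(A)=W(A^\flat)$ but with \emph{different} distinguished elements ($\xi$ versus $p$), and no equivalence drops out. Nor is there a categorical equivalence $\Perf(A\text{ on }p)\simeq\Perf(A^\flat\text{ on }\varpi^\flat)$ to invoke.

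There is also a smaller issue in your last step: \Cref{QHKthm} only says $\K_i$ of a perfect $\FF_p$-algebra is uniquely $p$-divisible for $i>0$, so $\K(-;\FF_p)$ is a priori only $1$-truncated (with $\pi_1=\K_0[p]$); getting $0$-truncatedness of the fiber needs more than QHK alone. The paper avoids any direct tilting of $\K$-theory with support. Instead it proves that $B\mapsto\K(B_{\mathrm{perf}}^{\sharp}\text{ on }p;\FF_p)$ extends to a cdh sheaf on $A^\flat$-schemes satisfying Milnor excision (\Cref{thm_cdhdesc}), and then uses \Cref{prop_finite_dimension} together with an excision d\'evissage (\Cref{generalreduction}) to reduce to the case where $A$ is itself a perfectoid valuation ring. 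There the tilting input is only the isomorphism $\mathcal{O}/p\cong\mathcal{O}^\flat/t^\flat$, which feeds into concrete localization sequences (\Cref{Quillendevcoherent}, \Cref{Gvanishmodp}, \Cref{smoothoverperfdval}); the characteristic $p$ endpoint is then handled by Temkin's inseparable local uniformization and Geisser--Levine rather than QHK.
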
 

 \Cref{perfectoidk} holds more generally for any ring $A$ whose derived
 $p$-completion satisfies the hypotheses of the theorem, as the conclusion of
 the theorem is insensitive to replacing $A$ by its derived $p$-completion.  In
 the case where $A$ is the absolute integral closure of a complete discrete
 valuation ring of mixed characteristic, \Cref{perfectoidk} is proved by
 different methods in 
\cite{niziol-crystalline} and \cite{hesselholt-ocp}. In fact, the result in
\cite{niziol-crystalline} works more generally for smooth algebras, and plays a
crucial role in the approach to the crystalline conjecture of $p$-adic Hodge
theory in \emph{op.~cit}.

\begin{theorem}[{Nizio\l~\cite[Lem.~3.1]{niziol-crystalline}}] 
\label{Nizthm}
Let $\mathcal O_K$ be a complete discrete valuation ring with fraction field $K$; let $ \overline{K}$ be an
algebraic closure of $K$, and $\mathcal O_{\overline K}$ the integral closure of $\mathcal O_K$ in $\overline K$. 
    For any smooth $\mathcal O_{\overline{K}}$-algebra $R$, the natural map 
$\K(R) \to \K( R \otimes_{\mathcal O_{\overline K}} \overline{K})$ becomes an equivalence
after profinite completion. 
\end{theorem}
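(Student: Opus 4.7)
The plan is to decompose the profinite completion of $\K$-theory into its $\ell$-complete pieces over all primes $\ell$ and handle the cases $\ell\neq p$ and $\ell=p$ separately. As a preliminary reduction, since $R$ is smooth (hence finitely presented) over $\mathcal{O}_{\overline{K}}=\colim_{K'}\mathcal{O}_{K'}$ (the colimit over finite subextensions $K\subset K'\subset\overline{K}$), there exist some $K'_0$ and a smooth $\mathcal{O}_{K'_0}$-algebra $R_0$ with $R\cong R_0\otimes_{\mathcal{O}_{K'_0}}\mathcal{O}_{\overline{K}}$; since $\K$-theory commutes with filtered colimits of rings, both $\K(R)$ and $\K(R[1/p])$ are filtered colimits of the analogous objects for $R_{K'}:=R_0\otimes_{\mathcal{O}_{K'_0}}\mathcal{O}_{K'}$ as $K'$ varies, and it suffices to work uniformly in $K'$.

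For the prime-to-$p$ part, I would apply Gabber--Suslin rigidity. The pair $(\mathcal{O}_{\overline{K}},(p))$ is henselian (being a filtered colimit of complete DVRs), and after replacing $R$ by its henselization along $pR$---which does not alter $\K(-;\mathbb{Z}_\ell)$ for $\ell\neq p$---rigidity identifies the $\ell$-adic $\K$-theory of $R$ with that of $R/pR$, a smooth algebra over the perfect residue ring of $\mathcal{O}_{\overline{K}}$. On the other hand, $R[1/p]$ is smooth over the algebraically closed field $\overline{K}$. Via Gabber absolute purity together with the Thomason--Beilinson--Lichtenbaum comparison of $\K$-theory with étale cohomology, $\K(R;\mathbb{Z}/\ell^n)$ and $\K(R[1/p];\mathbb{Z}/\ell^n)$ are both computed by the same étale cohomology groups, specialization being an isomorphism over a strictly henselian base.

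For the $p$-complete case---the heart of the theorem---I would follow Nizio\l's strategy of comparing $p$-adic $\K$-theory with syntomic cohomology. One constructs compatible Chern class maps $\K(X;\mathbb{Z}/p^n)\to H^*_{\mathrm{syn}}(X;\mathbb{Z}/p^n(\ast))$ for smooth $\mathcal{O}_{\overline{K}}$-schemes $X$ and $\K(X_{\overline{K}};\mathbb{Z}/p^n)\to H^*_{\et}(X_{\overline{K}};\mathbb{Z}/p^n(\ast))$ for the generic fiber, together with the Fontaine--Messing comparison identifying syntomic cohomology of $X$ with $p$-adic étale cohomology of $X_{\overline{K}}$ in this geometric setting. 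The motivic filtration spectral sequences converging to $\K$-theory on each side then have matching $E_2$-pages after $p$-completion, yielding the desired equivalence.

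The main obstacle is precisely the $p$-complete part: one must invoke the deep Fontaine--Messing--Kato comparison in the geometric perfectoid setting, and carefully control its behavior under the non-Noetherian colimit $\mathcal{O}_{\overline{K}}=\colim_{K'}\mathcal{O}_{K'}$. Ensuring that the syntomic--étale comparison passes through this colimit and remains compatible with the Chern class maps, and that the resulting equivalence can be upgraded from cohomology to $\K$-theory in the absence of Noetherian hypotheses on the base, is where the bulk of the technical work is concentrated.
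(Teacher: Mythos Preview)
Your approach is quite different from the paper's, and the $p$-adic part has a serious circularity problem relative to Nizio\l's program.

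The paper does not split into $\ell=p$ versus $\ell\neq p$ at all. It gives a single uniform argument via localization sequences, essentially following Nizio\l's original proof. After descending $R$ to a smooth algebra $R_0$ over some DVR $(\mathcal{O}_{K'},\pi')$, the fiber of $\K(R_{K'})\to\K(R_{K'}[1/\pi'])$ is $\K(R_{K'}\text{ on }\pi')$. The key lemma (Lemma~\ref{divmap}) says that for a finite flat map of regular local rings $A_1\to A_2$, the induced base-change map on such fibers is divisible by $\mathrm{len}_{A_2}(A_2/\mathfrak{m}_1A_2)$. Given any integer $d$, one can pass from $K'$ to $K'(\pi'^{1/d^2})$, which has ramification index $d^2$; hence every class in the colimit fiber is $d$-divisible and so vanishes modulo $d$ (Proposition~\ref{modpzero} and Corollary~\ref{generalNizthmAIC}). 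This handles all primes simultaneously with nothing beyond d\'evissage and elementary ramification.

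Your $\ell=p$ argument invokes the Fontaine--Messing syntomic--\'etale comparison, but this is precisely what \cite{niziol-crystalline} sets out to \emph{prove} using the present $\K$-theory statement as input; you are assuming the conclusion. Even granting the comparison as a black box from other sources, turning a cohomological comparison into an equivalence of $\K$-theory spectra via ``motivic filtration spectral sequences'' over the non-noetherian base $\mathcal{O}_{\overline K}$ is nontrivial and not something you have justified. For $\ell\neq p$, the assertion that replacing $R$ by its henselization along $pR$ ``does not alter $\K(-;\mathbb{Z}_\ell)$'' is not correct as stated: rigidity compares a henselian pair to its quotient, not a ring to its henselization. The subsequent purity/specialization argument could perhaps be salvaged, but it is a far heavier route than the two-line divisibility trick the paper actually uses.
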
 

\Cref{Nizthm} is proved using localization sequences in $\K$-theory after
descending $R$ to the integral closure of $\roi_K$ in a finite extension of $K$; more
generally, one can replace $\roi_{\overline K}$ by any absolutely integrally closed 
valuation ring (\Cref{generalNizthmAIC}). 
Combining this argument with the  tilting correspondence 
\cite{Sch12}
to reduce to
characteristic $p$ and the inseparable local uniformization of Temkin
\cite{Temkin13, Tem17}, 
we give the following generalization of  (the $p$-adic
case of) \Cref{Nizthm} as well as of \Cref{perfectoidk}. We remark that the theorem holds more generally for algebras which are $p$-completely smooth in a suitable sense (see \Cref{corol_t-smooth} and~\Cref{rem_p_smooth}).

\begin{theorem}[\Cref{smoothoverperfdval} and~\Cref{Invertingp}]
\label{generalNizthm}
    Let $\mathcal{O}$ be a perfectoid valuation ring.
\begin{enumerate}
    \item If $p\neq 0$ in $\mathcal{O}$, then for any smooth
        $\mathcal{O}$-algebra $R$, the natural map $\K(R; \mathbb{F}_p) \to
        \K(R[1/p]; \mathbb{F}_p)$ is an equivalence.
    \item For any perfectoid $\mathcal{O}$-algebra $A$ and smooth $A$-algebra
        $R$ of relative dimension $\le d$, the map $\K(R; \mathbb{F}_p) \to
        \K(R[1/p]; \mathbb{F}_p)$ is $d$-truncated.
\end{enumerate}
\end{theorem}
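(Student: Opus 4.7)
The plan is to prove part (1) first and then deduce part (2) by combining (1) with \Cref{perfectoidk}. For (1), I would adapt Nizio\l's strategy for \Cref{Nizthm}: use a K-theoretic localization sequence to identify the fiber of $\K(R) \to \K(R[1/p])$, reduce the resulting mod $p$ statement to characteristic $p$ via the tilting correspondence, and finally handle the characteristic $p$ case via Temkin's inseparable local uniformization together with \Cref{QHKthm}.

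For (1), fix $\mathcal{O}$ a perfectoid valuation ring of mixed characteristic and $R$ smooth over $\mathcal{O}$. The open immersion $\Spec R[1/p] \hookrightarrow \Spec R$ produces a localization sequence whose fiber is the K-theory of perfect $R$-complexes supported on $V(p)$. Because $\mathcal{O}$ is non-Noetherian, one cannot simply devissage to $\K(R/p)$. Instead, I would pass to a $p$-adic perfectoid cover of $R$ (e.g., by adjoining $p$-power roots of an \'etale coordinate system) and apply tilting: the cover's tilt is a perfectoid algebra over $\mathcal{O}^\flat$, the perfect valuation ring of characteristic $p$. The mod $p$ fibers on the two sides are naturally comparable, so the desired vanishing for $R$ reduces to the analogous vanishing for smooth algebras $R'$ over $\mathcal{O}^\flat$. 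In characteristic $p$, Temkin's inseparable local uniformization allows one, after a purely inseparable alteration of the base, to present such an $R'$ locally as an \'etale extension of $\mathcal{O}^\flat[T_1, \ldots, T_d]$. Adjoining $p$-power roots of the $T_i$ yields a perfectoid $\mathcal{O}^\flat$-algebra, whose $\K(-; \mathbb{F}_p)$ is concentrated in degree $0$ by \Cref{QHKthm}. A descent argument along the perfection and the purely inseparable alteration then transfers this vanishing back to $R'$ and hence to $R$.

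For (2), given a perfectoid $\mathcal{O}$-algebra $A$ (not necessarily a valuation ring) and $R$ smooth over $A$ of relative dimension $\leq d$: Zariski- or Nisnevich-locally on $\Spec R$ we may assume $R$ is \'etale over $A[T_1, \ldots, T_d]$. Combining \Cref{perfectoidk} (which gives $0$-truncation for $A \to A[1/p]$) with the homotopy invariance of $\K(-; \mathbb{F}_p)$ for perfectoid rings (established elsewhere in this paper) yields the $0$-truncated statement for the polynomial ring $A[T_1, \ldots, T_d]$. Passing from $A[T_1, \ldots, T_d]$ to the \'etale extension $R$ introduces extra truncation bounded by the \'etale cohomological dimension $d$ of the smooth $d$-dimensional generic fibre $\Spec R[1/p]$, yielding the advertised $d$-truncation once one patches over a Nisnevich cover using descent for mod $p$ K-theory.

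The chief obstacle lies in the tilting step of (1): tilting naturally compares perfectoid rings, not smooth algebras, so one must carefully combine tilting with descent from a perfectoid cover. To make this rigorous, I expect one must work at the level of the trace map to $\TC$, where tilting-type comparisons hold more transparently on $\THH$-theoretic invariants, and then transfer the resulting identifications back to $\K$-theory via the cyclotomic trace and its known behavior on perfectoid rings.
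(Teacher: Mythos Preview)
Your outline for part (1) overcomplicates the argument and misses the paper's key simplification. You worry that tilting only compares perfectoid rings and therefore propose passing to a perfectoid cover of $R$ and invoking $\TC/\THH$; but none of this is needed. The paper identifies the fiber of $\K(R;\FF_p)\to\K(R[1/p];\FF_p)$ with $\G(R/pR;\FF_p)$ via a localization sequence for weakly regular stably coherent rings (valuation rings are such, so smooth $\mathcal{O}$-algebras are too). Now the only tilting required is the trivial isomorphism $\mathcal{O}/p\cong\mathcal{O}^\flat/t^\flat$ of the \emph{base}: this exhibits $R/pR$ as a smooth algebra over $\mathcal{O}^\flat/t^\flat$, so one is reduced to showing $\G(\overline R;\FF_p)=0$ for $\overline R$ smooth over $V/t$ with $V$ a perfect valuation ring of characteristic $p$. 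This in turn follows by lifting $\overline R$ to a smooth $V$-algebra and using the characteristic $p$ statement $\K(R';\FF_p)\simeq\K(R'\otimes_V\mathrm{Frac}(V);\FF_p)$. The latter is not proved by descent from a perfection as you suggest, but by Nizio\l's divisibility trick: $V$ is ind-regular (Temkin), every element of the maximal ideal is a $p$th power, and for a finite flat degree-$p^2$ extension of regular local rings the induced map on $\K(R'\text{ on }\mathfrak m)$ is divisible by $p^2$, hence zero mod $p$. No perfectoid cover of $R$, no cyclotomic trace, and no descent along purely inseparable alterations of $R$ itself are involved.

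Your plan for part (2) has a genuine gap: it is circular. \Cref{perfectoidk} is literally the case $d=0$ of part (2) and is proved in the paper \emph{as} \Cref{Invertingp}; likewise the homotopy invariance of $\K(-;\FF_p)$ on perfectoid rings (\Cref{KisKHperfectoid}) is deduced \emph{after} \Cref{Invertingp} by the same machinery. So you cannot use either as input. Moreover, the step ``passing to an \'etale extension introduces extra truncation bounded by $d$'' does not make sense for the fiber of $\K(R;\FF_p)\to\K(R[1/p];\FF_p)$: \'etale localization on $R$ does not interact with truncation of this fiber in the way you describe. The paper's route is entirely different: it builds a cdh sheaf on $\mathrm{Sch}^{\mathrm{qcqs}}_{A^\flat}$ out of the functor $B\mapsto\K(B_{\mathrm{perf}}^\sharp\otimes_A R\text{ on }p;\FF_p)$ using Bhatt--Scholze $v$-descent for perfect schemes and Tamme's excision criterion, and then invokes the fact that a finitary cdh sheaf vanishing on henselian valuation rings vanishes (on schemes of finite valuative dimension). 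This reduces (2) to the case where $A$ is itself a perfectoid valuation ring, where part (1) handles mixed characteristic and the Geisser--Levine theorem (giving $\K_n(R_{\mathfrak p};\FF_p)\cong\Omega^n_{\mathrm{log}}=0$ for $n>d$) handles characteristic $p$.
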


For any ring $R$, the natural map $\K(R) \to \K(R[1/p])$
becomes an equivalence after $K(1)$-localization \cite{BCM20, LMMT20}, but in
general the conclusion that they agree in sufficiently high degrees requires
further assumptions. 
The proof deduces part 2 from part 1 using cdh-descent and separate arguments in
the case of valuation rings. 
These arguments also lead to the following result comparing algebraic and
homotopy $\K$-theory for perfectoid rings; since the comparison between $\K$ and
$\KH$ of a noetherian ring is known to be related to singularities, the following result
gives a sense in which perfectoid rings behave like regular ones.

\begin{theorem}[\Cref{KisKHperfect}, \Cref{KisKHpadicWittperfect}, and \Cref{KisKHperfectoid}]
Let $R$ be a smooth algebra over either
\begin{enumerate}
\item a perfect $\bb F_p$-algebra; or
\item $W(A)$ where $A$ is a perfect $\bb F_p$-algebra; or
\item a perfectoid ring which is an algebra over some perfectoid valuation ring.
\end{enumerate}
Then the map $\K(R; \mathbb{F}_p) \to \KH(R; \mathbb{F}_p)$ is an equivalence (in case 1, even $\K(R)\rightarrow\KH(R)$ is an equivalence).  
\end{theorem}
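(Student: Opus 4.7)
The plan is to show $\K(R;\FF_p) \simeq \KH(R;\FF_p)$ by verifying that $\K(R;\FF_p)$ is already $\AA^1$-invariant on polynomial extensions of $R$ (equivalently, by Weibel's criterion, that $R$ is $K_n$-regular for all $n$). The key tool will be \Cref{generalNizthm}, which transfers $\AA^1$-invariance questions between $R$ and its generic fibre $R[1/p]$; a secondary tool is the cdh-local description of the fibre of $\K \to \KH$ due to Kerz--Strunk--Tamme, which vanishes on regular noetherian schemes.

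For case (1), $R$ is smooth over a perfect $\FF_p$-algebra $A$, so the relative Frobenius $F_{R/A}$ is finite flat (Kunz, applied relative to a perfect base). Writing $A$ as a filtered colimit of perfections of finitely generated $\FF_p$-subalgebras, and using Temkin-style desingularization in characteristic $p$ together with the perfection procedure, I would express $R$ as a filtered colimit of smooth algebras over regular noetherian rings. Quillen's theorem then gives $\K = \KH$ on each term, and since both theories are finitary, the integral equivalence $\K(R) \simeq \KH(R)$ follows.

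For cases (2) and (3), first note that $W(A)$ is a perfectoid $\ZZ_p$-algebra when $A$ is a perfect $\FF_p$-algebra, and $\ZZ_p$ is itself a perfectoid valuation ring, so case (2) reduces to case (3). Let $R$ be smooth of relative dimension $\leq d$ over a perfectoid $\Oscr$-algebra $A$: if $\Oscr$ has characteristic $p$ then $A$ is a perfect $\FF_p$-algebra and case (1) applies, so assume $\Oscr$ has mixed characteristic. \Cref{generalNizthm}(2) provides that both $\K(R;\FF_p) \to \K(R[1/p];\FF_p)$ and $\K(R[t_1,\ldots,t_n];\FF_p) \to \K(R[1/p][t_1,\ldots,t_n];\FF_p)$ are $(d+n)$-truncated. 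Since $R[1/p]$ is smooth over the $\QQ$-algebra $A[1/p]$, classical $\AA^1$-invariance of $\K$-theory on smooth algebras (via Quillen applied to noetherian regular approximations of $A[1/p]$, over which $R[1/p]$ descends smoothly) yields $\K(R[1/p];\FF_p) \simeq \KH(R[1/p];\FF_p)$; combining this with the uniform truncation bound and the $\AA^1$-invariance of $\KH$ gives $\K(R;\FF_p) \simeq \KH(R;\FF_p)$.

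The main obstacle I expect is the non-noetherian nature of $A$ and $A[1/p]$ in the perfectoid setting: the finite presentation approximations must be arranged to preserve both smoothness and the truncation bounds of \Cref{generalNizthm}. For case (1), reliance on Temkin's desingularization is a nontrivial input; an alternative route would use Kerz--Strunk--Tamme's cdh-local criterion combined with \Cref{generalNizthm}(1) for perfectoid valuation rings (where the generic fibre is handled directly), yielding a more uniform proof across all three cases.
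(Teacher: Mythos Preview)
Your proposal has two genuine gaps.

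\textbf{Case (1).} You assert that via ``Temkin-style desingularization'' one can write $R$ as a filtered colimit of smooth algebras over regular noetherian rings. This is not justified: a perfect $\FF_p$-algebra is in general \emph{not} ind-regular (consider the perfection of a singular $\FF_p$-variety). Temkin's inseparable local uniformization applies to \emph{valuation rings}, not to arbitrary perfect rings. The paper's proof of \Cref{KisKHperfect} does not attempt any such approximation; instead it proves directly (\Cref{thm_cdhdesc}, via Bhatt--Scholze $v$-descent) that $X \mapsto \K(X_\sub{perf}\otimes_A R)$ is a cdh sheaf on $\mathrm{Sch}^\sub{qcqs}_A$, and then uses \Cref{prop_finite_dimension} to reduce to the case where $A$ is a perfect valuation ring---only at that point do the regularity-type inputs (\Cref{smooth_over_val_are_nice}, \Cref{stablycohfflat}) apply. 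Your suggested alternative via Kerz--Strunk--Tamme cdh-descent has the same defect: their results are for noetherian schemes, so one still needs the perfection-specific cdh-descent of \Cref{thm_cdhdesc}.

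\textbf{Case (3).} Your argument from the truncation bound in \Cref{generalNizthm}(2) does not conclude. Knowing that $\K(R[t_1,\dots,t_n];\FF_p) \to \K(R[1/p][t_1,\dots,t_n];\FF_p)$ is $(d+n)$-truncated, together with $\AA^1$-invariance of $\K(-;\FF_p)$ on $R[1/p]$, tells you only that $\NK^n(R;\FF_p)$ is $(d+n)$-truncated; since the bound \emph{grows} with $n$, this gives no vanishing and hence no comparison between $\K$ and $\KH$ in low degrees. The paper's proof of \Cref{KisKHperfectoid} avoids this entirely: it applies the general reduction machinery (\Cref{generalreduction}, again resting on \Cref{thm_cdhdesc}) to the localizing invariant $\NK(-\text{ on }p)$ and reduces to perfectoid valuation rings, where $\NK$ vanishes by the coherence results (\Cref{KisKHvalring}). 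Separately, your justification of $\K(R[1/p];\FF_p)\simeq\KH(R[1/p];\FF_p)$ via ``regular approximations of $A[1/p]$'' is not correct reasoning---$A[1/p]$ need not be ind-regular---though the conclusion holds for the much simpler reason that $\K(-;\FF_p)$ is $\AA^1$-invariant on any $\ZZ[1/p]$-algebra \cite[Prop.~1.6]{WeibelKH}.
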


\subsection*{Notation}
We let $\catst$ denote the $\infty$-category of small,
stable idempotent-complete $\infty$-categories. 
We denote by $\K$ the nonconnective $\K$-theory functor, defined on $\catst$
as in \cite{bgt1} and taking values in spectra. Similarly, we denote by $\KH$ the homotopy $\K$-theory of
\cite{WeibelKH}, defined more generally on $H\mathbb{Z}$-linear
$\infty$-categories \cite{Tab15}.

Throughout, we let $\K(-;\ZZ_p)$ and $\KH(-;\ZZ_p)$ denote the $p$-completions of
$\K$-theory and homotopy invariant $\K$-theory.
Similarly, we denote by $\K(-; \mathbb{F}_p)$ and $\KH(-; \mathbb{F}_p)$ their
mod $p$ reductions. 

All rings in this paper will be commutative.
Given a ring $R$, we let $\mathcal{D}(R)$ denote its derived $\infty$-category. 
Given a ring $R$ (or more generally an $\mathbb{E}_\infty$-ring) and an ideal $I \subset R$, we let
$\mathrm{Perf}(R \text{ on } I)$ denote the $\infty$-category of perfect
$R$-module spectra $M$ which are $I$-power torsion: in other words, for any $x
\in I$, $M[x^{-1} ] = 0$. 
We will only use this definition when $I$ is the radical of a finitely generated
ideal $J$, in which case $\mathrm{Perf}(R \text{ on } I)$ is the kernel of
$\mathrm{Perf}( \spec(R)) \to \mathrm{Perf}( \spec(R) \setminus V(J))$. 
Given a localizing invariant $E$, we write $E(R \text{ on } I) = E(
\mathrm{Perf}(R \text{ on } I))$. 
Given a map of pairs $(R, I) \to (S, J)$ such that $\mathrm{rad}(IS) = J$, 
base-change induces a functor $\mathrm{Perf}(R \text{ on } I) \to
\mathrm{Perf}(S \text{ on } J )$ and a consequent map in any localizing
invariant. 

We adopt the convention in this paper that localizing invariants commute with filtered colimits.
\subsection*{Acknowledgments}

We thank Dustin Clausen, Lars Hesselholt,  and Wies{\l}awa Nizio\l\ for helpful discussions. 
This material is based upon work supported by the National Science Foundation
under Grant No. DMS-1440140 while the authors were in residence at the Mathematical
Sciences Research Institute in Berkeley, California, during the Spring 2019 semester.
The first author was supported by NSF Grants DMS-2102010 and DMS-2120005 and a Simons Fellowship.
This work was done while the second author was a Clay Research Fellow. This
project has received funding from the European Research Council (ERC) under the
European Union's Horizon 2020 research and innovation programme (grant
agreement No. 101001474).

\section{Localization sequences}

In this section, we review some basic properties of coherent rings 
and their
$\K$-theory, in particular proving the localization
results \Cref{Quillendevcoherent} and \Cref{locregdivisor}. All of these are direct analogs of standard properties
\cite{Qui72} of the $\K$-theory
and $\G$-theory of noetherian schemes; we will need to apply them to valuation
rings. In Appendix A, we indicate how to prove these results and some
generalizations using d\'evissage results about the $\K$-theory of stable
$\infty$-categories (which will not be used in the rest of the paper); in this section we only use classical 
d\'evissage theorems. 

We will say that a coherent ring $R$ is \emph{weakly regular} if $R$ has
finite flat
(or weak) dimension. Equivalently, by \cite[Cor.~2.5.6]{Glaz}, the projective dimensions of
finitely presented $R$-modules are uniformly bounded (necessarily by the flat dimension). 
A ring $R$ is said to be \emph{stably coherent} if every finitely presented 
$R$-algebra is coherent. It is sufficient to check coherence of finitely generated polynomial algebras over $R$. The class of stably coherent rings is closed
under localizations, quotients by finitely generated ideals, and finitely presented extensions \cite[Thms.~2.4.1 \& 2.4.2]{Glaz}. We will primarily be interested in weakly regular stably coherent rings; this includes all regular\footnote{Throughout the article, we adopt the convention that regular rings are assumed to be Noetherian.}
rings of finite Krull dimension, but also valuation rings by the following results.

\begin{proposition} 
\label{valstcoh}
Any valuation ring is stably coherent and of flat dimension $\leq 1$. 
\end{proposition}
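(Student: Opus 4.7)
The plan is to split the proposition into the two conclusions, which rest on rather different ingredients.

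For the flat dimension bound, the key structural fact is that every finitely generated ideal $I \subset V$ is principal: by the totality of the divisibility order on a valuation ring, any finite generating set can be reduced to a single element. If $I = (a)$ with $a \neq 0$, then $V$ being a domain makes multiplication by $a$ a $V$-module isomorphism $V \xrightarrow{\sim} I$, so $I$ is free; the zero ideal is trivially flat. An arbitrary ideal is the filtered colimit of its finitely generated sub-ideals, and filtered colimits of flat modules are flat, so every ideal of $V$ is flat. By the standard $\Tor$-characterization of weak dimension (a ring has weak dimension $\leq 1$ iff every ideal is flat, equivalently iff $\Tor_2^V$ vanishes identically), this gives $\mathrm{fd}(V) \leq 1$.

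For stable coherence, we invoke the closure properties of the class of stably coherent rings that were cited just before the proposition: by [Glaz, Thms.~2.4.1 \& 2.4.2] it suffices to check that $V$ itself is coherent and that each polynomial algebra $V[x_1, \ldots, x_n]$ is coherent. The first is immediate from the Bézout property, since every finitely generated ideal of $V$ is principal and hence finitely presented (the annihilator of $a \in V$ is $0$ when $a \neq 0$ and $V$ otherwise). The second is the classical theorem of Glaz that polynomial rings in finitely many variables over a valuation domain are coherent, proved in \cite[Ch.~7]{Glaz} using the Pr\"ufer structure of $V$ (specifically, finite generation of annihilators and syzygies of finitely generated modules).

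The main obstacle is the polynomial ring coherence statement. One cannot hope for a purely formal closure argument here: Soublin exhibited coherent rings whose polynomial ring in one variable fails to be coherent, so the property does \emph{not} propagate freely from $V$ to $V[x]$ in general. The proof genuinely uses that $V$ is a valuation domain (equivalently, a local Pr\"ufer domain), and we simply cite Glaz rather than reproduce the argument, since this is an entirely standard input.
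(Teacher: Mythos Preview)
Your proof is correct and matches the paper's approach: both cite Glaz for the stable coherence (the paper points to \cite[Th.~7.3.3]{Glaz} directly), and both give a short direct argument for the flat dimension bound. The only cosmetic difference is that the paper deduces flat dimension $\leq 1$ from the fact that torsion-free modules over a valuation ring are flat \cite[Tag 0549]{stacks-project}, whereas you argue via principality of finitely generated ideals; these are essentially the same observation.
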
 
\begin{proof} 
The stable coherence is \cite[Th.~7.3.3]{Glaz}. Every torsion-free module over a valuation ring is
flat \cite[Tag 0549]{stacks-project}, whence the second claim. 
\end{proof} 

\begin{proposition} 
\label{fflatpassesup}
    Let $A$ be a stably coherent ring with flat dimension $d_0$, and let $R$ be a
    smooth algebra of relative dimension\footnote{We say that a smooth $A$-algebra $R$ has {\em
    relative dimension $\le d$} if all fibers
    $R\otimes_{A}\kappa(\frak p)$, where $\kappa(\frak p)$ runs over the
    residue fields of $A$, have Krull dimension $\le d$.} $\le d$ over $A$.
    Then $R$ has flat dimension $\leq d + d_0$; in particular, $R$ is weakly
    regular.
\end{proposition}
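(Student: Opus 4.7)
The plan is to bound the flat dimension of $R$ by a derived base change argument. For arbitrary $R$-modules $M,N$, restricted along $A \to R$ to $A$-modules, I would use the formula
$$M \otimes_R^L N \;\simeq\; (M \otimes_A^L N) \otimes_{R \otimes_A R}^L R,$$
which holds because $R$ is smooth, hence flat, over $A$, so that $R \otimes_A^L R = R \otimes_A R$ and $M \otimes_A^L N$ carries a natural $R \otimes_A R$-module structure in which the two tensor factors act through the two separate $R$-actions. Tensoring with $R$ over $R \otimes_A R$ via the multiplication map then imposes that the two $R$-actions agree, recovering $M \otimes_R^L N$.

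Next I would bound the Tor-amplitudes of the two factors separately. The flat-dimension hypothesis on $A$ forces $M \otimes_A^L N$ to live in homological degrees $[0,d_0]$. For the second factor, the key input is that $R$ is a perfect $R \otimes_A R$-module of Tor-amplitude contained in $[0,d]$. Granting this, combining the two bounds through the spectral sequence of the derived tensor product gives that $M \otimes_R^L N$ is concentrated in degrees $[0, d_0 + d]$, so $\mathrm{Tor}^R_i(M,N) = 0$ for $i > d_0 + d$. Since $M$ and $N$ are arbitrary, this is the claimed bound on the flat dimension of $R$. Weak regularity of $R$ is then immediate: $R$ is coherent by stable coherence of $A$ (\cite[Thms.~2.4.1 \& 2.4.2]{Glaz}), and we have just shown it has finite flat dimension.

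The main obstacle is therefore the Tor-amplitude bound on $R$ as an $R \otimes_A R$-module. I would prove it by recognising $\Delta\colon \Spec R \hookrightarrow \Spec(R \otimes_A R)$ as a section of the second projection $\Spec(R \otimes_A R) \to \Spec R$, which is smooth of relative dimension $\leq d$. Any section of a smooth morphism is a regular closed immersion whose codimension equals the relative dimension of the projection, and crucially this does not require a noetherian hypothesis (cf.~\cite{stacks-project}). Consequently, Zariski-locally on $\Spec(R \otimes_A R)$ the kernel of the multiplication map is generated by a regular sequence of length $\leq d$, and the associated Koszul complex provides a length-$\leq d$ resolution of $R$ by finite free $R \otimes_A R$-modules. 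Since Tor-amplitude is a local property on $\Spec(R \otimes_A R)$, these local resolutions produce the required global bound.
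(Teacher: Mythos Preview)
Your argument is correct and takes a genuinely different route from the paper's. The paper works entirely with finitely presented $R$-modules: using \cite[Cor.~2.5.6]{Glaz} it suffices to bound the projective dimension of each such module, and after a syzygy reduction to the case of an $A$-flat finitely presented $R$-module $M$, the paper tests $\mathrm{Tor}_R^{d+1}(M,R/\mathfrak m)$ against residue fields via \cite[Cor.~2.5.10]{Glaz}; flatness over $A$ then allows a base change to the fibre $R\otimes_A\kappa(\mathfrak p)$, which is smooth of dimension $\le d$ over a field and hence of global dimension $\le d$.

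Your approach replaces this module-theoretic reduction by the single derived identity $M\otimes_R^{\mathbb L}N\simeq (M\otimes_A^{\mathbb L}N)\otimes_{R\otimes_A R}^{\mathbb L}R$ together with the geometric fact that the diagonal of a smooth morphism is a regular immersion, so $R$ has Tor-amplitude $\le d$ over $R\otimes_A R$ via local Koszul resolutions. This is cleaner in that it bounds the flat dimension of $R$ directly for arbitrary modules and never invokes coherence of $A$ or $R$ until the final sentence (where coherence is needed only to conclude ``weakly regular''); in particular your argument shows the flat-dimension bound holds for any ring $A$ of flat dimension $\le d_0$, not just stably coherent ones. The paper's argument, by contrast, is more elementary in that it avoids derived categories and the regular-immersion input, trading these for the commutative-algebra lemmas from \cite{Glaz}.
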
 
\begin{proof} 
Let $M$ be a finitely presented $R$-module. 
It suffices to show that if $M$ is flat as an $A$-module, then $M$ has
projective dimension $\leq d$ as an $R$-module. 
By \cite[Cor.~2.5.10]{Glaz}, it suffices to show that for every maximal ideal 
$\mathfrak{m}$ of $R$, one has $\mathrm{Tor}_R^{d + 1}(M, R/\mathfrak{m}) = 0$. 
Now $R/\mathfrak{m}$ pulls back to a  prime ideal $\mathfrak{p} \subset A$
with residue field $\kappa(\mathfrak{p})$. 
Since $M$ is flat over $A$, we have
$\mathrm{Tor}_R^{d+1}(M, R/\mathfrak{m}) = 
\mathrm{Tor}^{d+1}_{R \otimes_{A} \kappa(\mathfrak{p})}( M \otimes_{A}
\kappa(\mathfrak{p}), R/\mathfrak{m})$. However, this vanishes 
since $R \otimes_{A} \kappa( \mathfrak{p})$ is a smooth algebra over
the field
$\kappa(\mathfrak{p})$ of dimension $\le d$ and consequently it has global dimension $\le d$. 
\end{proof} 

\begin{corollary}\label{smooth_over_val_are_nice}
Any smooth algebra over a valuation ring is weakly regular stably coherent.
\end{corollary}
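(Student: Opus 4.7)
The corollary is essentially just the combination of the two preceding propositions, so the plan is to assemble them. Let $\Oscr$ be a valuation ring and $R$ a smooth $\Oscr$-algebra; I want to verify the two properties separately.

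For stable coherence, I would invoke \Cref{valstcoh}, which gives that $\Oscr$ itself is stably coherent. Since $R$ is smooth over $\Oscr$, it is in particular a finitely presented $\Oscr$-algebra, and the class of stably coherent rings is closed under finitely presented extensions (as recalled in the paragraph preceding \Cref{valstcoh}, citing \cite[Thms.~2.4.1 \& 2.4.2]{Glaz}). Hence $R$ is stably coherent.

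For weak regularity, I would observe that by \Cref{valstcoh} the ring $\Oscr$ has flat dimension $\leq 1$, and that $R$ has some relative dimension $\leq d$ over $\Oscr$ (which we may take to be finite locally, hence globally after a Zariski cover if necessary, but since smoothness includes finite presentation the relative dimension is bounded). Applying \Cref{fflatpassesup} with $A = \Oscr$, $d_0 = 1$, we conclude that $R$ has flat dimension $\leq d+1$, which in particular is finite, so $R$ is weakly regular.

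There is no real obstacle here; the only mild subtlety is making sure smooth morphisms have a well-defined global bound on relative dimension, which is automatic since smooth morphisms are by convention locally of finite presentation with geometrically regular fibers of some dimension, and the statement of \Cref{fflatpassesup} already assumes such a bound $d$ is chosen.
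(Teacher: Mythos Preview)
Your proposal is correct and is exactly the intended argument: the paper states the corollary without proof precisely because it is immediate from \Cref{valstcoh} and \Cref{fflatpassesup} in the way you describe. The only point worth noting is that the bound on relative dimension is automatic since $R$ is finitely presented over $\mathcal{O}$, hence $\spec(R)$ is quasi-compact and the locally constant relative dimension is globally bounded---but you already address this.
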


The $\K$-theory of weakly regular stably coherent rings behaves in a similar
way to that of regular Noetherian rings, as exemplified by the following
result. Given a coherent ring $R$, we define the \emph{$\G$-theory} $\G(R)$ to
be the connective $\K$-theory of the abelian category of finitely presented
$R$-modules. The first two parts of
the next result appear as \cite[Ex.~1.4]{WeibelKH}; compare also
\cite[Th.~3.33]{AGH} and \cite[Th.~3.3]{KM21} for treatments.

\begin{proposition} 
\label{stablycohfflat}
If $R$ is a weakly regular stably coherent ring, then
\begin{enumerate}
\item $\K_{-i}(R) = 0$ for $i > 0$; that is, the canonical map
$\K_{\ge0}(R)\to\K(R)$ from connective $\K$-theory to $\K$-theory is an equivalence;
\item the natural map $\K(R) \to \KH(R)$ is an equivalence;
\item the natural map $\K_{\ge0}(R)\to\G(R)$ is an equivalence.
\end{enumerate}
\end{proposition}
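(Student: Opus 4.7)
The plan is to prove (3) first via Quillen's resolution theorem, then establish $A^1$-invariance of $\G$-theory as the key intermediate step, and finally use Bass's fundamental theorem to deduce (1) and (2) simultaneously.

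For (3), weak regularity bounds the projective dimension of finitely presented $R$-modules by some $d_0$, and coherence ensures that kernels of surjections from finitely generated projectives onto finitely presented modules are again finitely presented. Hence every finitely presented module admits a resolution of length $\leq d_0$ by finitely generated projectives (the $d_0$-th syzygy is finitely presented and flat, hence projective). Quillen's resolution theorem then identifies the connective $\K$-theory of finitely generated projectives with that of finitely presented modules, giving $\K_{\geq 0}(R) \simeq \G(R)$.

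The crucial step is $A^1$-invariance $\G(R) \simeq \G(R[t])$ for $R$ stably coherent. Quillen's proof for Noetherian rings in \cite{Qui72} (filtration by $t$-degree plus devissage) carries over with only cosmetic changes once one knows that $R[t]$ is coherent, which is exactly what stable coherence guarantees. Combined with (3) and \Cref{fflatpassesup} (which shows that $R[t_1,\ldots,t_n]$ remains weakly regular stably coherent), this gives $\K_{\geq 0}(R) \simeq \K_{\geq 0}(R[\Delta^n])$ for all $n$, and passing to the colimit yields the connective half of (2).

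To finish (1) and to obtain the non-connective equivalence in (2), I would apply Bass's fundamental theorem, which provides natural short exact sequences
\begin{equation*}
0 \to \K_n(R) \to \K_n(R[t]) \oplus \K_n(R[t^{-1}]) \to \K_n(R[t,t^{-1}]) \to \K_{n-1}(R) \to 0.
\end{equation*}
The hypothesis class is closed under $R \mapsto R[t^{\pm 1}]$ (by \Cref{fflatpassesup} combined with the closure of stable coherence under localizations and finitely presented extensions), so by induction on $-n$ it is enough to show $\K_{-1}(R) = 0$. Using Quillen's localization sequence $\G(R) \to \G(R[t]) \to \G(R[t,t^{-1}])$, valid for coherent rings by devissage of $t$-torsion finitely presented modules, together with the $A^1$-invariance established above, one computes $\G_0(R[t,t^{-1}]) \simeq \G_0(R)$; since $\K_0 = \G_0$ for $R[t,t^{-1}]$ by (3), the Bass sequence at $n=0$ forces $\K_{-1}(R) = 0$. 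Iterating on $R[t^{\pm 1}]$ yields vanishing in all negative degrees and promotes the connective equivalence of (2) to a non-connective one.

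The principal technical obstacle is verifying that Quillen's $A^1$-invariance and associated localization sequence for $\G$-theory survive the passage from Noetherian to stably coherent base rings; the definition of stable coherence is precisely tailored so that adjoining polynomial variables or inverting elements preserves coherence of the category of finitely presented modules, after which Quillen's classical arguments apply nearly verbatim.
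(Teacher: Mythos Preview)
Your argument is correct and reconstructs precisely the classical route that the paper defers to by citing \cite[Ex.~1.4]{WeibelKH}: resolution theorem for (3), Quillen's $\mathbb{A}^1$-invariance of $\G$-theory extended from noetherian to stably coherent base (which, as you rightly flag, is the only point requiring care), and then Bass's fundamental sequence to kill negative $\K$-groups and promote the connective equivalence to the nonconnective one.

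The paper also records a genuinely different second proof, carried out in Appendix~A. There one applies the theorem of the heart of Barwick and Antieau--Gepner--Heller to the stable $\infty$-category $\mathcal{D}(R)$ with its standard $t$-structure: stable coherence plus weak regularity is exactly the condition that $\mathcal{D}(R)$ be ``stably regular coherent'' in the sense of the appendix, and \Cref{thmofheart} then gives vanishing of negative $\K$-theory and the identification with $\K$-theory of the heart in one stroke, while \Cref{KisKHwhenboundedt} proves $\K\simeq\KH$ directly from the $\mathbb{P}^1$-bundle formula and a d\'evissage at $\infty$, without passing through $\G$-theory or the Bass sequence. The categorical approach is more general (it applies to any $H\mathbb{Z}$-linear stable $\infty$-category with the appropriate bounded $t$-structure on compact objects) and packages the inductive Bass argument into the theorem of the heart itself; your approach has the advantage of staying entirely within classical exact-category $\K$-theory and making the role of $\G$-theoretic $\mathbb{A}^1$-invariance explicit.
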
 
\begin{proof}
We have already explained that the first two parts may be found in  \cite[Ex.~1.4]{WeibelKH}, where we implicitly use \Cref{fflatpassesup}. They are also special cases of \Cref{thmofheart} and 
\Cref{KisKHwhenboundedt}. 

The third part follows from Quillen's d\'evissage theorem, as the hypotheses imply that any object in the abelian category of finitely presented $R$-modules admits a finite length resolution by finite projective modules. Alternatively it is a special case of \Cref{thmofheart}.
\end{proof}

We may now present the localization sequences which will be required later.

\begin{proposition} 
\label{Quillendevcoherent}
Let $R$ be a weakly regular stably coherent ring and let $ I \subset R$ a
finitely generated ideal. Then there is a natural fiber sequence 
$\G(R/I) \to \K(R) \to \K( \spec(R)\setminus V(I))$. 
\end{proposition}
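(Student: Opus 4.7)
The plan is to combine the Thomason--Trobaugh localization sequence for perfect complexes with a d\'evissage identification of the kernel term, following the classical strategy Quillen used in the noetherian setting, adapted to the weakly regular stably coherent context.

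First, by the definition of $\K(\spec(R)\setminus V(I))$ as the $\K$-theory of $\Perf(\spec(R)\setminus V(I))$, the Thomason--Trobaugh/Neeman localization theorem furnishes a Verdier sequence of stable $\infty$-categories
\[\Perf(R\text{ on }I)\to\Perf(R)\to\Perf(\spec(R)\setminus V(I)),\]
and hence a fiber sequence $\K(R\text{ on }I)\to\K(R)\to\K(\spec(R)\setminus V(I))$ in $\K$-theory. It thus suffices to identify $\K(R\text{ on }I)\simeq\G(R/I)$.

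Let $\Ascr$ denote the abelian category of finitely presented, $I$-power torsion $R$-modules; it is abelian by coherence of $R$. I will factor the identification through $\K(\Ascr)$. On the one hand, Quillen's classical d\'evissage yields $\G(R/I)\simeq\K(\Ascr)$: any $M\in\Ascr$ is finitely generated, so because $I$ is finitely generated we have $I^nM=0$ for some $n$, and the $I$-adic filtration $M\supset IM\supset\cdots\supset I^nM=0$ exhibits $M$ as a finite extension of the quotients $I^kM/I^{k+1}M$, each a finitely presented $R/I$-module (coherence of $R$ being used to see that each $I^kM$, being a finitely generated submodule of the finitely presented $M$, is itself finitely presented). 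On the other hand, weak regularity forces every object of $\Ascr$ to have finite projective dimension over $R$, hence to be perfect as an $R$-module, giving a functor $\Ascr\hookrightarrow\Perf(R\text{ on }I)$; I would then argue that this extends to an equivalence $\mathcal{D}^b(\Ascr)\simeq\Perf(R\text{ on }I)$ of stable $\infty$-categories, from which Gillet--Waldhausen delivers $\K(\Ascr)\simeq\K(R\text{ on }I)$.

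The main technical hurdle is this last equivalence $\mathcal{D}^b(\Ascr)\simeq\Perf(R\text{ on }I)$. Essential surjectivity follows because a perfect complex over the coherent ring $R$ has finitely presented cohomology concentrated in boundedly many degrees, and the support condition on $V(I)$ forces each cohomology module to be $I$-power torsion, hence in $\Ascr$; weak regularity then lets one reconstruct the perfect complex from its cohomology. Fully faithfulness reduces to checking that $\Ext$ groups computed in $\Ascr$ agree with those in $\mathcal{D}(R)$, which holds since $\Ascr$ is closed under extensions in $\Mod(R)$. An alternative route, avoiding the $\mathcal{D}^b$ identification entirely, is to observe that $\Perf(R\text{ on }I)$ carries a bounded $t$-structure (restricted from the standard one on $\Perf(R)$) with heart $\Ascr$, and to apply the theorem of the heart (\Cref{thmofheart}).
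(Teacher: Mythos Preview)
Your overall strategy---Thomason--Trobaugh localization plus an identification $\K(R\text{ on }I)\simeq\G(R/I)$ via the abelian category $\mathcal{A}$ of finitely presented $I$-power torsion modules---is sound and is in fact the route the paper takes in its \emph{alternative} proof in the appendix, which invokes the theorem of the heart (your ``route (b)'') exactly as you suggest. The paper's \emph{main} proof is different and more elementary: it works entirely with abelian categories, applying Quillen's classical localization and d\'evissage theorems to the Serre quotient of finitely presented $R$-modules by the $I$-power torsion ones to obtain a fiber sequence $\G(R/I)\to\G(R)\to\G(\spec(R)\setminus V(I))$, and then uses the weakly regular stably coherent hypothesis (\Cref{stablycohfflat}) to replace $\G$ by $\K$ on the middle and right terms. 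This avoids stable $\infty$-categories and the Barwick/AGH machinery altogether, at the cost of a small extra argument (extending finitely presented sheaves across $V(I)$) to handle the non-affine target.

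One genuine gap in your write-up: in route (a), the claim that full faithfulness of $\mathcal{D}^b(\mathcal{A})\to\mathcal{D}(R)$ ``holds since $\mathcal{A}$ is closed under extensions'' is not a valid argument. Closure under extensions only gives agreement of $\Ext^1$; identifying higher $\Ext$ groups requires more (e.g.\ effaceability of the higher derived functors within $\mathcal{A}$, or a direct argument that $\mathcal{D}^b(\mathcal{A})\to\mathcal{D}^b_{\mathcal{A}}(\Mod^{\mathrm{fp}}(R))$ is an equivalence, which is not automatic for Serre subcategories). Your route (b) sidesteps this entirely and is the one to keep; you should, however, note that the theorem of the heart as stated in the paper requires the \emph{stably} regular coherent condition on $\mathcal{D}(R)_{I\text{-tors}}$ to handle nonconnective $\K$-theory, which follows from the hypothesis that $R$ itself is stably coherent.
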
 
\begin{proof} 
    We write $\G(\spec(R)\setminus V(I))$ for the connective $\K$-theory of the
    abelian category of finitely presented quasi-coherent sheaves on $\spec(R)
    \setminus V(I)$. This abelian category is the Serre quotient of the abelian
    category of finitely presented $R$-modules by the subcategory of those
    objects which are $I$-power torsion; see in particular \cite[Tag
    01PD]{stacks-project} for the result that finitely presented quasi-coherent
    sheaves can be extended so that $\K_0(R)\rightarrow\K_0(\Spec(R)\setminus
    V(I))$ is surjective.

    The classical localization and d\'evissage theorems
    \cite[Thms.~4 and 5]{Qui72}
    therefore provide a fiber sequence $\G(R/I)\to\G(R)\to\G(\spec R \setminus
    V(I))$. Finally,  \Cref{stablycohfflat} implies that $\G(R)\simeq\K(R)$ and
    that $\G(\spec(R) \setminus V(I))\simeq\K(\spec(R)\setminus V(I))$ (in the
    latter case use induction on the size of a finite affine open cover of
    $\spec(R)\setminus V(I)$, again using \cite[Tag 01PD]{stacks-project} to
    eliminate any possible problem with failure of surjectivity on $\K_0$).
\end{proof} 

\begin{proposition} 
    If $R$ is a ring and $t\in R$ is a nonzerodivisor such that $R/t$ is weakly
    regular stably coherent, then there are natural fiber sequences
    $\K(R/t)\rightarrow\K(R)\rightarrow\K(R[1/t])$ and
    $\KH(R/t)\rightarrow\KH(R)\rightarrow\KH(R[1/t])$.
	 Consequently, we have a pullback square
     \begin{equation} \label{KtoKH} \begin{gathered}\xymatrix{
	 \K(R) \ar[d]  \ar[r] &  \K(R[1/t]) \ar[d]  \\
	 \KH(R) \ar[r] &  \KH(R[1/t]).
     }\end{gathered}\end{equation}
\label{locregdivisor}
\end{proposition}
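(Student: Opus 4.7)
The plan is to deduce the $\K$-theoretic fiber sequence from the general Thomason localization by identifying $\K(R/t)$ with the $\K$-theory of perfect complexes supported on $V(t)$, then to bootstrap to the $\KH$ statement via a simplicial polynomial extension, and finally to deduce the pullback square by comparing the two fiber sequences.

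First I would invoke the Verdier sequence $\Perf(R \text{ on } (t)) \to \Perf(R) \to \Perf(R[1/t])$, which exists for any commutative ring, to obtain a fiber sequence $\K(R \text{ on } (t)) \to \K(R) \to \K(R[1/t])$. Because $t$ is a nonzerodivisor, the Koszul complex $[R \xrightarrow{t} R]$ realizes $R/t$ as a perfect $R$-module, so restriction of scalars along $R \to R/t$ defines a functor $\Perf(R/t) \to \Perf(R \text{ on } (t))$, and the core task is to show this induces an equivalence on $\K$-theory.

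For that I would run a three-step d\'evissage chain. Let $\modfg(R;(t))$ denote the abelian category of finitely presented $R$-modules which are $(t)$-power torsion; this is genuinely abelian because each $R/t^n$ is a finitely presented $R/t$-algebra and hence coherent by the stable coherence of $R/t$. One then has the identifications: (i) $\K(R/t) \simeq \G(R/t)$ by \Cref{stablycohfflat} applied to $R/t$; (ii) $\G(R/t) \simeq \G(\modfg(R;(t)))$ by classical Quillen d\'evissage along the $t$-adic filtration, whose associated graded pieces are finitely presented $R/t$-modules; and (iii) $\G(\modfg(R;(t))) \simeq \K(R \text{ on } (t))$, obtained by viewing a finitely presented $(t)$-torsion module as a perfect complex concentrated in degree zero and appealing to a theorem-of-the-heart type result such as \Cref{thmofheart}, using that any perfect $R$-module with support on $V(t)$ is bounded with homotopy groups lying in $\modfg(R;(t))$.

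For the $\KH$ fiber sequence I would apply the preceding argument degreewise to the simplicial polynomial extension $R \otimes_{\mathbb{Z}} \mathbb{Z}[\Delta^\bullet]$: the hypothesis that $R/t$ is weakly regular stably coherent is inherited by all polynomial extensions $R[x_1,\ldots,x_n]/t \simeq (R/t)[x_1,\ldots,x_n]$ via \Cref{fflatpassesup} together with the closure of stable coherence under finitely presented extensions, so each simplicial level gives a fiber sequence and taking geometric realizations yields the desired $\KH$ fiber sequence. The pullback square then falls out by comparing the two fiber sequences along the natural transformation $\K \to \KH$: part (2) of \Cref{stablycohfflat} forces $\K(R/t) \to \KH(R/t)$ to be an equivalence, which is precisely the condition for the square involving $R$ and $R[1/t]$ to be cartesian.

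The main obstacle is step (iii) in the d\'evissage chain, namely matching the $\G$-theory of the abelian category of $(t)$-power torsion finitely presented $R$-modules with the non-connective $\K$-theory of perfect complexes supported on $V(t)$. Verifying that a perfect $R$-module $M$ with $M[1/t]=0$ is bounded in the standard $t$-structure on $\mathcal{D}(R)$ and that each $\pi_i M$ is finitely presented over $R/t^n$ for a single $n$ (hence finitely presented over $R/t$ by coherence), and then assembling these observations into the required $\K$-theoretic equivalence, is where the substantive work lies; \Cref{thmofheart} in the appendix is the cleanest formal vehicle.
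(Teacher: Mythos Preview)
Your proposal is correct and follows essentially the approach of the appendix (specifically the proof of \Cref{prop:torsion}), rather than the proof given in the main text. The main text argues via Grayson's localization theorem, which identifies only the \emph{connective cover} of $\K(R \text{ on } tR)$ with the $\K$-theory of an exact category $\mathcal{E}$ of torsion modules of Tor-dimension $\leq 1$; classical Quillen d\'evissage then relates $\K(\mathcal{E})$ to $\G(R/t)$, and a separate Bass delooping step handles the nonconnective degrees. Your route via Thomason--Trobaugh localization and the theorem of the heart is cleaner in that it treats all degrees uniformly, at the cost of invoking the harder Barwick and Antieau--Gepner--Heller results; the main-text proof stays entirely within classical Quillen/Grayson/Bass machinery. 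One small point in your step~(iii): the claim that each $\pi_i M$ lies in $\modfg(R;(t))$ is not immediate, since $R$ itself need not be coherent; the argument (as in \Cref{prop:torsion}) proceeds by downward induction on the top nonvanishing homotopy degree, using that the top $\pi_i$ of a perfect complex is a cokernel of a map of finite free modules, hence finitely presented, hence in the heart, hence perfect --- so one may truncate and repeat.
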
 
\begin{proof}
Note first that $R/t^n$ is stably coherent for each $n \geq 1$ by
\cite[Lem.~3.26]{BMS1}. 
By the localization theorem of \cite{Grayson}, 
the connective cover of the fiber of $\K(R) \to \K(R[1/t])$ (i.e., the
    connective cover of $\K(R\textrm{ on }tR)$) is the $\K$-theory of the exact category
$\mathcal{E}$ of 
finitely presented $R$-modules $M$ such that $M[1/t] = 0$ and such that $M$ has
$\mathrm{Tor}$-dimension $\leq 1$.

Consider the category $ \mathcal{A}$ of all finitely presented
$R$-modules which are $t$-power torsion. 
In other words, $\mathcal{A}$ is the union of the categories of finitely
presented $R/t^n$-modules over all $n \geq 0$. Our coherence hypotheses thus show that
$\mathcal{A}$ is an abelian category, and $\mathcal{A}$ contains $\mathcal{E}$
as an exact subcategory. 

We observe that every object in $\mathcal{A}$ has finite
$\mathrm{Tor}$-dimension as an $R$-module. 
Indeed, suppose $M \in \mathcal{A}$ is a finitely presented $R$-module with $M[1/t]
= 0$. 
We may assume $tM =0 $. Then our weak regularity hypothesis implies that $M$ has
finite $\mathrm{Tor}$-dimension as an $R/t$-module, and hence as an $R$-module. 
Using this, we can show that  every object in $\mathcal{A}$ admits a finite resolution by objects
in $\mathcal{E}$. If $M \in \mathcal{A}$ has $\mathrm{Tor}$-dimension $ \geq 2$,
then 
we can choose a surjection $(R/t^i)^n \twoheadrightarrow M$ (for appropriate $i,
n \gg 0$); the kernel $K$ will belong to $\mathcal{A}$ and have
$\mathrm{Tor}$-dimension at least one less, whence the claim by induction. 

Thus we can apply the d\'evissage theorem in the 
form of 
\cite[Sec.~4]{Qui72} to see that $\K(\mathcal{E}) \xrightarrow{\sim}
\K(\mathcal{A})$. By d\'evissage again, we have $\K(\mathcal{A}) = \G(R/t)$,
which is $\K(R/t)$ by \Cref{stablycohfflat} because $R/t$ is stably coherent
    and weakly regular. In conclusion, we have shown that the canonical map
    $\K(R/t)\to\K(R\textrm{ on }tR)$ identifies the left side with the
    connective cover of the right side.

To obtain the result in nonconnective degrees, and so complete the proof, we
    claim that the canonical map $\K_i(R/t)\to \K_i(R\textrm{ on }tR)$ is an
    isomorphism for all $i\le0$. The case $i=0$ has already been proved, so we
    proceed inductively by Bass delooping via the fundamental theorem of
    $\K$-theory \cite[Thm.~6.1]{TT90}; this
gives exact sequences
\[
\K_i(A[u]) \oplus \K_i(A[u^{-1}]) \to \K_i(A[u^{\pm 1}]) \to
\K_{i-1}(A) \to 0\]
for any ring $A$, and more generally
\[
\K_i(A[u]\textrm{ on }IA[u]) \oplus \K_i(A[u^{-1}]\textrm{ on }IA[u^{-1}]) \to \K_i(A[u^{\pm 1}]\textrm{ on }IA[u^{\pm1}]) \to
\K_{i-1}(A\textrm{ on }I) \to 0
\]
for any finitely generated ideal $I\subseteq A$. Assuming that the claim has
    been proved in any fixed degree $i\le 0$ (for all pairs $R,t$ as in the
    statement of the proposition), one immediately obtains the claim in degree
    $i-1$ by comparing the Bass exact sequences for $\K(R/t)$ and $\K(R\textrm{
        on }tR)$, noting that the inductive hypothesis for the pair $R[u],t$
    implies that $\K_i(R[u]/t)\to\K_i(R[u]\textrm{ on }tR[u])$ is an isomorphism,
    and similarly for $R[u^{-1}]$ and $R[u^{\pm1}]$.
\end{proof} 

\section{Smooth algebras over valuation rings}
In this section we study the $\K$-theory of smooth algebras over valuation rings, and in particular prove \Cref{generalNizthm}(1).

The proof of the following lemma is close to that of \cite[Lem.~3.1]{niziol-crystalline}:

\begin{lemma}
\label{divmap}
Let $(A_1, \mathfrak{m}_1) \to (A_2, \mathfrak{m}_2)$ be a finite flat map of regular local
rings; let $R_1$ be a smooth $A_1$-algebra and set $R_2 = R_1 \otimes_{A_1} A_2$. 
Then the natural map (given by extension of scalars)
$\K( R_1 \text{ on } \mathfrak{m}_1R_1) \to 
\K(R_2 \text{ on } \mathfrak{m}_2R_2)$
is divisible by the integer $d = \mathrm{len}_{A_2}(A_2/\mathfrak{m}_1 A_2)$. 
\end{lemma}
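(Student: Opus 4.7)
The plan is to transport the assertion to Quillen's $\G$-theory via the d\'evissage identification of \Cref{Quillendevcoherent}, and then invoke the additivity theorem applied to a composition series of $A_2/\mathfrak{m}_1A_2$ as an $A_2$-module.

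First, I would note that each $A_i$ is (Noetherian) regular local of finite Krull dimension, hence stably coherent of finite flat dimension. Because $R_i$ is smooth over $A_i$, \Cref{fflatpassesup} then makes $R_i$ weakly regular stably coherent, so that \Cref{Quillendevcoherent} supplies identifications
\[\K(R_i \text{ on } \mathfrak{m}_iR_i) \simeq \G(R_i/\mathfrak{m}_iR_i), \qquad i = 1, 2,\]
arising from Quillen's d\'evissage on the abelian category $\mathcal{A}_i$ of finitely presented $R_i$-modules which are $\mathfrak{m}_iR_i$-power torsion. Since $A_2$ is flat over $A_1$, the ring $R_2 = R_1 \otimes_{A_1} A_2$ is flat over $R_1$, and the extension of scalars $F(M) = M \otimes_{R_1} R_2 = M \otimes_{A_1} A_2$ defines an exact functor $\mathcal{A}_1 \to \mathcal{A}_2$ (using $\mathfrak{m}_1R_2 \subset \mathfrak{m}_2R_2$) that induces $f^*$ under the above identifications.

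Second, for $M \in \mathcal{A}_1$ annihilated by $\mathfrak{m}_1R_1$ I can rewrite $F(M) = M \otimes_{k_1}(A_2/\mathfrak{m}_1A_2)$ as an $R_2$-module, where $k_i = A_i/\mathfrak{m}_i$ and the $R_2$-action splits between the two tensor factors. I would then fix a composition series
\[0 = I_d \subset I_{d-1} \subset \cdots \subset I_0 = A_2/\mathfrak{m}_1A_2\]
of length $d$ for $A_2/\mathfrak{m}_1A_2$ as $A_2$-module, with every $I_{j-1}/I_j \cong k_2$. Because $k_1$ is a field, tensoring with $M$ over $k_1$ is exact, yielding a natural filtration of $F$ by exact subfunctors $F_j(M) = M \otimes_{k_1} I_j$, whose successive subquotients are all canonically isomorphic to $G(M) = M \otimes_{k_1} k_2 \cong M \otimes_{R_1/\mathfrak{m}_1R_1} R_2/\mathfrak{m}_2R_2$. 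Quillen's additivity theorem then yields $[F] = d \cdot [G]$ as maps of $\K$-theory spectra, which is precisely the divisibility asserted.

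The main obstacle I foresee is verifying that the d\'evissage identifications of $\K(R_i \text{ on } \mathfrak{m}_iR_i)$ with $\G(R_i/\mathfrak{m}_iR_i)$ really do intertwine the externally-defined map $f^*$ (on perfect modules) with the $\G$-theoretic pullback $-\otimes_{R_1} R_2$. This should follow from the fact that under weak regularity, objects of $\mathcal{A}_i$ have finite Tor-dimension over $R_i$ and thus represent perfect $R_i$-modules, so $f^*$ is computed by the naive tensor product; the bookkeeping here is the trickiest part of the argument.
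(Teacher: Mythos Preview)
Your proposal is correct and follows essentially the same line as the paper: reduce via d\'evissage to the special fiber, then filter by a composition series of $A_2/\mathfrak{m}_1A_2$ and apply additivity. The one simplification in the paper is that it applies d\'evissage only on the \emph{source}, identifying $\K(R_1\text{ on }\mathfrak{m}_1R_1)\simeq\K(\perf(R_1/\mathfrak{m}_1R_1))$ while leaving the target as $\perf(R_2\text{ on }\mathfrak{m}_2R_2)$; the filtration argument then takes place in $\K_0$ of the functor category $\fun(\perf(R_1/\mathfrak{m}_1R_1),\perf(R_2\text{ on }\mathfrak{m}_2R_2))$, which completely sidesteps the compatibility bookkeeping you flagged as the trickiest part.
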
 
\begin{proof} 
Consider the functor of stable $\infty$-categories
\[ F\colon  \perf(R_1/\mathfrak{m}_1R_1) \to \perf( R_1 \text{ on }\mathfrak{m}_1R_1)
    \xrightarrow{(-)\otimes_{A_1} A_2} 
\perf( R_2 \text{ on } \mathfrak{m}_2R_2).  \]
By d\'evissage, the first map induces  an equivalence on $\K$-theory. 
The composite functor $F\colon \perf(R_1/\mathfrak{m}_1) \to \perf( R_2 \text{ on }
\mathfrak{m}_2R_2)$ is equivalently given by 
the tensor product functor $(-)\otimes_{A_1/\mathfrak{m}_1} A_2/\mathfrak{m}_1 A_2$. 
We need to show that $F$ induces a map on $\K$-theory which is
divisible by $d$; it suffices to show that in 
$\K_0( \fun( \perf(R_1/\mathfrak{m}_1), \perf(R_2 \text{ on } \mathfrak{m}_2R_2)))$,
the class $[F]$ is divisible by~$d$. 

Any finite length $A_2/\mathfrak{m}_1A_2$-module $M$ induces a functor
$(-)\otimes_{A_1/\mathfrak{m}_1} M\colon \perf(R_1/\mathfrak{m}_1) \to \perf( R_2
\text{ on } \mathfrak{m}_2R_2)$, from which we obtain a class in $\K_0( \fun( \perf(R_1/\mathfrak{m}_1), \perf(R_2 \text{ on } \mathfrak{m}_2R_2)))$; moreover, this process takes short exact sequences of modules to sums in the $\K_0$-group. Since $A_2/\mathfrak{m}_1 A_2$ has a finite filtration with associated graded given by $d$ copies of $A_2/\mathfrak{m}_2$, 
it follows that $[F]$ is equal to $d$ times the class of the functor
$(-)\otimes_{A_1/\mathfrak{m}_1} A_2/\mathfrak{m}_2$. 
\end{proof}

An ind-regular local ring is a local ring which is a filtered colimit of regular 
rings (without loss of generality, one can take a filtered colimit of regular
local rings under local homomorphisms).

\begin{proposition} 
\label{modpzero}
Let $A$ be an ind-regular local ring with maximal
ideal $\mathfrak{m}_A$, and let $d\ge1$; assume that $\mathfrak{m}_A$ is the radical of a finitely
generated ideal in $A$, and that every element of $\mathfrak{m}_A$
is a $d^\sub{th}$ power. Then $\K( R \text{ on } \mathfrak{m}_AR)/d =0$ for
    every smooth $A$-algebra $R$, i.e., $\K(R)/d\quis \K(\Spec(R)\setminus
    V(\mathfrak m_AR))/d$.
\end{proposition}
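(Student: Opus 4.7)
The plan is to combine \Cref{divmap} with the ind-regular structure of $A$ via a filtered colimit argument. I would first write $A = \colim_\alpha A_\alpha$ as a filtered colimit of regular local rings under local homomorphisms, descend $R$ to a smooth $A_\alpha$-algebra $R_\alpha$ for some sufficiently large $\alpha$, and observe that if $J = (t_1,\dots,t_n) \subset A$ is a finitely generated ideal with $\sqrt{J}=\mathfrak{m}_A$ whose generators lie in $A_\alpha$, then $J \subset \mathfrak{m}_{A_\alpha} A \subset \mathfrak{m}_A$ and taking radicals gives $\sqrt{\mathfrak{m}_{A_\alpha} A} = \mathfrak{m}_A$. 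By compactness of perfect complexes and commutation of localizing invariants with filtered colimits, this identification yields
\[
\K(R \text{ on } \mathfrak{m}_A R) \;\simeq\; \colim_\alpha \K(R_\alpha \text{ on } \mathfrak{m}_{A_\alpha} R_\alpha).
\]

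For the key divisibility, fix such an $\alpha$ with $\dim A_\alpha \ge 1$, let $\pi_1,\dots,\pi_{m_\alpha}$ be a regular system of parameters of $A_\alpha$, and by hypothesis choose $\rho_i \in \mathfrak{m}_A$ with $\rho_i^d = \pi_i$ (the $\rho_i$ lie in some term $A_\beta$ of the ind-system). Define
\[
A'_\alpha \;:=\; A_\alpha[z_1,\dots,z_{m_\alpha}]/(z_1^d-\pi_1,\dots,z_{m_\alpha}^d-\pi_{m_\alpha}),
\]
a finite flat $A_\alpha$-algebra of rank $d^{m_\alpha}$ equipped with a map $A'_\alpha \to A$ sending $z_i \mapsto \rho_i$. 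Because $\pi_1,\dots,\pi_{m_\alpha}$ is a regular system of parameters of $A_\alpha$, a direct check shows $A'_\alpha$ is regular local with maximal ideal $(z_1,\dots,z_{m_\alpha})$ (indeed $A'_\alpha/\mathfrak{m}_{A_\alpha} A'_\alpha = k_{A_\alpha}[z_i]/(z_i^d)$ is Artinian local), and $\mathrm{len}_{A'_\alpha}(A'_\alpha/\mathfrak{m}_{A_\alpha} A'_\alpha) = d^{m_\alpha}$. Inserting $A'_\alpha$ into the ind-system, \Cref{divmap} applied to $A_\alpha \to A'_\alpha$ shows that the base-change map
\[
\K(R_\alpha \text{ on } \mathfrak{m}_{A_\alpha} R_\alpha) \to \K(R'_\alpha \text{ on } \mathfrak{m}_{A'_\alpha} R'_\alpha)
\]
is divisible by $d^{m_\alpha}$, and hence by $d$. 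Composing with the further base-change to $\K(R \text{ on } \mathfrak{m}_A R)$ (well-defined since $\sqrt{\mathfrak{m}_{A'_\alpha} A} = \mathfrak{m}_A$, from $\rho_i^d = \pi_i$), the natural map $\K(R_\alpha \text{ on } \mathfrak{m}_{A_\alpha} R_\alpha) \to \K(R \text{ on } \mathfrak{m}_A R)$ is $d$-divisible.

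Composing each such $d$-divisible map with the canonical quotient $\K(R \text{ on } \mathfrak{m}_A R) \to \K(R \text{ on } \mathfrak{m}_A R)/d$ yields the zero map, so passing to the colimit produces the zero map from $\K(R \text{ on } \mathfrak{m}_A R)$ to its own mod-$d$ cofiber; this is the identity composed with the quotient, and forces $\K(R \text{ on } \mathfrak{m}_A R)/d \simeq 0$. The main obstacle I anticipate is the colimit description of $\K(R \text{ on } \mathfrak{m}_A R)$ in terms of the $\mathfrak{m}_{A_\alpha}$-supported theories; this requires a careful compactness-and-descent argument because a perfect $R$-module supported on $V(\mathfrak{m}_A R)$ need not descend to a perfect $R_\alpha$-module supported on $V(\mathfrak{m}_{A_\alpha} R_\alpha)$ until $\alpha$ is taken large enough to absorb the support.
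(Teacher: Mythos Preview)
Your approach is essentially the paper's: descend to a regular local stage $A_\alpha$, adjoin roots of parameters to produce a finite flat regular local extension, and invoke \Cref{divmap}. The paper adjoins a $d^2$-th root of a single element of $\mathfrak{m}_{A_\alpha}\setminus\mathfrak{m}_{A_\alpha}^2$, whereas you adjoin $d$-th roots of a full regular system of parameters; both yield a regular local intermediary through which the map to $A$ factors.

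There is, however, a genuine gap in your final paragraph. Knowing that each composite
\[
\K(R_\alpha \text{ on } \mathfrak{m}_{A_\alpha} R_\alpha)\;\longrightarrow\;\K(R \text{ on } \mathfrak{m}_A R)\;\xrightarrow{\,q\,}\;\K(R \text{ on } \mathfrak{m}_A R)/d
\]
is nullhomotopic does not allow you to ``pass to the colimit'' and conclude that $q$ itself is null: the nullhomotopies, coming from independent choices of the $\rho_i$ at each stage, are not coherent. If instead you read the argument on homotopy groups, you only obtain that $q_*\colon \pi_n\K(R \text{ on } \mathfrak{m}_A R)\to \pi_n(\K(R \text{ on } \mathfrak{m}_A R)/d)$ is zero for every $n$; but this merely says each $\pi_n$ is $d$-divisible and does \emph{not} force the mod-$d$ cofiber to vanish (for $Y=H(\mathbb{Q}/\mathbb{Z})$ one has $q_*=0$ yet $Y/d\simeq\Sigma H(\mathbb{Z}/d)\not\simeq 0$).

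The fix, which is exactly what the paper does, is to run the colimit on mod-$d$ $\K$-theory: lift a class $x\in\pi_n(\K(R \text{ on } \mathfrak{m}_A R)/d)$ to some $x_\alpha\in\pi_n(\K(R_\alpha \text{ on } \mathfrak{m}_{A_\alpha} R_\alpha)/d)$ and show that the transition map on mod-$d$ $\K$-theory to the next stage is null. For that, one needs the integral base-change map to be divisible by $d^2$, not just $d$, since $d\cdot\mathrm{id}$ need not vanish on a mod-$d$ spectrum (e.g.\ $2\neq 0$ on $\mathbb{S}/2$) while $d^2\cdot\mathrm{id}$ always does. This is precisely why the paper adjoins a $d^2$-th root rather than a $d$-th root; your construction gives $d^{m_\alpha}$-divisibility, which suffices when $\dim A_\alpha\geq 2$ but not when $A_\alpha$ is a discrete valuation ring. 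With that adjustment your argument goes through, and the ``obstacle'' you flagged (the colimit description of supported $\K$-theory) is routine once one fixes a finitely generated $J$ with $\sqrt{J}=\mathfrak{m}_A$.
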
 
\begin{proof} 
Let $x$ be a class in $\pi_n( \K( R \text{ on } \mathfrak{m}_AR)/d)$ for some integer $n$; we need to show that $x$ vanishes.
By assumption $A$ is a filtered colimit of regular local rings, so there exists a regular local ring  $(A_0, \mathfrak{m}_0)$, a map of local rings
$(A_0, \mathfrak{m}_0) \to (A, \mathfrak{m}_A)$, a smooth $A_0$-algebra $R_0$ with $R_0 \otimes_{A_0} A \simeq R$, and a class $x_0 \in
\pi_n( K ( R_0 \text{ on } \mathfrak{m}_0R_0)/d)$ such that $x_0$ is carried to $x$ under the
natural map 
\begin{equation}  \label{natmapR0}  \K ( R_0 \text{ on } \mathfrak{m}_0R_0)/d \to
\K ( R \text{ on }
\mathfrak{m}_AR)/d .\end{equation}
As $\frak m_A$ is the radical of a finitely generated ideal, we may further assume that $\mathrm{rad}(\mathfrak{m}_0 A) = \mathfrak{m}_A$.

If $A_0$ is a field then we automatically have $x = 0$, so suppose $\dim(A_0) \geq 1$. 
Let $\alpha \in \mathfrak{m}_{0} \setminus \mathfrak{m}_0^2$, and define the
local ring $(A_1, \mathfrak{m}_1)$ via $A_1 = 
A_0[t]/(t^{d^2} - \alpha)$. Note that $A_1$ is a regular local ring 
(with $t$ part of a system of parameters), and that $A_0 \to A_1$ is finite flat
with $d^2 =\mathrm{len}(A_1/\mathfrak{m}_0 A_1)$. 
Our assumptions imply that the map $(A_0, \mathfrak{m}_0) \to (A, \mathfrak{m}_A)$
factors over the inclusion $(A_0, \mathfrak{m}_0) \to (A_1, \mathfrak{m}_1)$. 
Therefore, the map 
\eqref{natmapR0}
factors over the map 
$ \K ( R_0 \text{ on } \mathfrak{m}_0R_0)/d  \to 
\K( R_1 
 \text{ on } \mathfrak{m}_1R_1)/d $, where $R_1 = R_0 \otimes_{A_0} A_1 $. 
 But this map is zero by 
\Cref{divmap}. 
\end{proof}

Consequently we recover \cite[Lem.~3.1]{niziol-crystalline}, generalized to
arbitrary absolutely integrally closed valuation rings. 
\begin{corollary} 
\label{generalNizthmAIC}
Let $V$ be an absolutely integrally closed valuation ring, and let $R$ be a
smooth $V$-algebra. Then the natural map $\K(R) \to \K(R \otimes_V
\mathrm{Frac}(V))$ is an equivalence after profinite completion. 
\end{corollary}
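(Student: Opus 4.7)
The plan is to reduce to Proposition~\ref{modpzero}. First, since $V$ is a valuation ring, $\mathrm{Frac}(V) = V[\mathfrak{m}_V^{-1}] = \colim_{t \in \mathfrak{m}_V \setminus 0} V[1/t]$, and since $\K$ preserves filtered colimits, the fiber of $\K(R) \to \K(R \otimes_V \mathrm{Frac}(V))$ becomes
\[
\colim_{t \in \mathfrak{m}_V \setminus 0} \K(R \text{ on } tR).
\]
A map of spectra is an equivalence after profinite completion if and only if it is so mod $n$ for every $n \geq 1$, so it suffices to show $\K(R \text{ on } tR)/n \simeq 0$ for each such $t$ and $n$.

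For fixed $t$, let $\mathfrak{q} = \mathrm{rad}(tV)$ be the minimal prime of $V$ containing $t$, and let $\mathfrak{p} \subsetneq \mathfrak{q}$ be the immediate predecessor (the largest prime of $V$ not containing $t$); both exist since $\Spec V$ is totally ordered. Set $V' = V_\mathfrak{q}/\mathfrak{p}V_\mathfrak{q}$ and $R' = R \otimes_V V'$. Localizations and quotients of AIC valuation rings by primes remain AIC valuation rings (the residue field of an algebraically closed field at any valuation is again algebraically closed), so $V'$ is an AIC valuation ring with exactly two primes, i.e., rank one, in which $\mathfrak{m}_{V'} = \mathrm{rad}(\bar tV')$ is the radical of a principal ideal.

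The technical core is the identification $\K(R \text{ on } tR)/n \simeq \K(R' \text{ on } \bar tR')/n$. This is obtained by stratifying the support $V(tR) = V(\mathfrak{q}R) \subset \Spec R$ by the chain of primes of $V$ above $\mathfrak{q}$, using the iterated localization fiber sequences
\[
\K(R \text{ on } \mathfrak{q}'R) \to \K(R \text{ on } \mathfrak{q}R) \to \K(R_\mathfrak{q} \text{ on } \mathfrak{q}R_\mathfrak{q})
\]
for $\mathfrak{q}' \supsetneq \mathfrak{q}$ the next prime up, and transfinitely inducting using Proposition~\ref{modpzero} applied to the AIC valuation rings $V_\mathfrak{q}$ at each step. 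Having reduced to the rank-one case, Proposition~\ref{modpzero} applies directly to $V'$ with $d = n$, using AIC to conclude that every element of $\mathfrak{m}_{V'}$ is an $n$-th power, and giving $\K(R' \text{ on } \bar tR')/n \simeq 0$.

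The main obstacle is verifying the ind-regularity hypothesis of Proposition~\ref{modpzero} for $V'$ (and for the intermediate rings $V_\mathfrak{q}$). In Nizio\l{}'s original setting $V = \mathcal{O}_{\overline{K}}$ for a complete DVR $K$, this is immediate since $V = \bigcup_L \mathcal{O}_L$ is a filtered colimit of DVRs, and the same structure persists after the localization-and-quotient defining $V'$. In general, one presents an AIC valuation ring as a filtered colimit of regular local subrings via Temkin's inseparable local uniformization, and this property descends to the relevant localizations and quotients.
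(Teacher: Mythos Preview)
Your proposal has the right ingredients (Temkin's local uniformization for ind-regularity, the AIC hypothesis for $d$-th powers, and \Cref{modpzero} as the engine), but the reduction step contains a genuine gap.

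The problem is the ``transfinite induction'' over the chain of primes above $\mathfrak{q}$. In an infinite-rank valuation ring the prime spectrum is a totally ordered set that need be well-ordered in neither direction: a prime $\mathfrak{q}$ need not admit a ``next prime up'' $\mathfrak{q}'\supsetneq\mathfrak{q}$, so the fiber sequence you write down is not available step by step. Worse, even at those primes $\mathfrak{q}''$ where you want to invoke \Cref{modpzero} for $V_{\mathfrak{q}''}$, the hypothesis that $\mathfrak{m}_{V_{\mathfrak{q}''}}=\mathfrak{q}''V_{\mathfrak{q}''}$ is the radical of a finitely generated ideal can fail: it fails precisely when $\mathfrak{q}''=\bigcup_{\mathfrak{r}\subsetneq\mathfrak{q}''}\mathfrak{r}$ is a limit of smaller primes, since then every $s\in\mathfrak{q}''$ already lies in some smaller prime and $\mathrm{rad}(sV_{\mathfrak{q}''})\subsetneq\mathfrak{q}''V_{\mathfrak{q}''}$. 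So \Cref{modpzero} is simply inapplicable at such primes, and there is no evident limit argument that bridges these gaps. The detour through the rank-one ring $V'$ does not help here, because isolating the contribution at $\mathfrak{q}$ already presupposes you have killed all contributions from primes strictly above $\mathfrak{q}$, which is exactly the problematic transfinite step.

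The paper sidesteps all of this with one move you are missing: write $V$ as a filtered colimit of \emph{finite-rank} absolutely integrally closed valuation rings and reduce to that case. In a finite-rank valuation ring the primes form a finite chain, so the maximal ideal $\mathfrak{m}_V$ is the radical of $(t)$ for any $t\in\mathfrak{m}_V$ not lying in the penultimate prime; one then applies \Cref{modpzero} directly to $V$ (no quotient $V'$ needed) to get $\K(R\text{ on }\mathfrak{m}_VR)/d=0$, and finishes by ordinary finite induction on the rank via the observation that $V[1/t]$ is a valuation ring of rank one less. This is both simpler and actually correct.
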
 
\begin{proof} 
Writing $V$ as a filtered colimit of finite rank absolutely integrally closed
valuation rings, we may assume that $V$ has finite rank. 
In particular, in this case the maximal ideal $\mathfrak{m}_V \subset V$ is the
radical of $(t)$, for any $t \in \mathfrak{m}_V$ which does not belong to a
smaller prime ideal. 
Using induction on 
the rank of the valuation (note by elementary properties of valuation rings that $V[1/t]$ is a valuation ring of rank one lower than $V$, unless $V$ is a field in which case we are done), we are therefore reduced to showing that 
$\K(R \text{ on } \mathfrak{m}_VR) = \mathrm{fib}( \K(R) \to \K( R[1/t]))$ vanishes after profinite completion. 
But $V$ is ind-regular by a result of Temkin \cite{Tem17} as observed by
Elmanto and Hoyois (see
\cite[Cor.~4.2.4]{AD21} for a discussion), so \Cref{modpzero} applies for all $d \geq 1$ to complete the proof.
\end{proof}

\begin{corollary} 
\label{caseperfectvalring}
Let $V$ be a perfect valuation ring of
characteristic $p$, and let $R$ be a smooth $V$-algebra. Then 
$\K(R; \mathbb{Z}_p)\to \K(R \otimes_V \mathrm{Frac}(V);
\mathbb{Z}_p)$ is an equivalence.
\end{corollary}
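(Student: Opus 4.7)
The plan is to run the argument of \Cref{generalNizthmAIC} essentially verbatim, replacing absolute integral closedness by perfectness in characteristic $p$. The crucial observation is that in a perfect $\mathbb{F}_p$-algebra every element has a $p^n$-th root for every $n \geq 1$, so \Cref{modpzero} applies with $d = p^n$ for each $n$; this is enough to upgrade the profinite vanishing of the AIC argument to $p$-adic vanishing. First I would reduce to the case where $V$ has finite rank: any valuation ring can be written as a filtered colimit of finite-rank valuation subrings, and passing to perfections gives $V = \colim_i \widetilde{V}_i$, where each $\widetilde V_i$ is a perfect valuation ring of the same rank as its finite-rank predecessor (perfection in characteristic $p$ preserves the valuation property, and the resulting $p$-divisible value group has the same Krull rank). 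The smooth algebra $R$ descends to some $R_i$ smooth over $\widetilde V_i$ by standard finite-presentation arguments, and both sides of the statement commute with this filtered colimit.

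Next I would induct on the rank $n$; the case $n = 0$ is trivial, since then $V = \mathrm{Frac}(V)$. For $n \geq 1$, let $\mathfrak p_0 \subsetneq \cdots \subsetneq \mathfrak p_n = \mathfrak m_V$ be the chain of primes of $V$ and pick $t \in \mathfrak m_V \setminus \mathfrak p_{n-1}$, so that $\sqrt{(t)} = \mathfrak m_V$. Then $V[1/t]$ is a perfect valuation ring of rank $n-1$ with $\mathrm{Frac}(V[1/t]) = \mathrm{Frac}(V)$. Since $R$ is weakly regular stably coherent by \Cref{smooth_over_val_are_nice}, \Cref{Quillendevcoherent} applied to the ideal $(t)$ yields a fibre sequence
\[ \K(R \text{ on } \mathfrak m_V R) \to \K(R) \to \K(R[1/t]). \]
By the inductive hypothesis applied to the smooth $V[1/t]$-algebra $R[1/t]$, the right-hand map becomes an equivalence after $p$-completion once its target is identified with $\K(R \otimes_V \mathrm{Frac}(V);\mathbb{Z}_p)$. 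So it remains to verify that $\K(R \text{ on } \mathfrak m_V R;\mathbb{Z}_p) = 0$.

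This last vanishing is exactly what \Cref{modpzero} provides: $V$ is ind-regular by Temkin's local uniformization (as recalled in the proof of \Cref{generalNizthmAIC}), its maximal ideal is the radical of the finitely generated ideal $(t)$, and by perfectness every element of $\mathfrak m_V$ is a $p^n$-th power for every $n \geq 1$. Hence $\K(R \text{ on } \mathfrak m_V R)/p^n = 0$ for all $n$, and taking the inverse limit over $n$ yields the desired vanishing. I do not expect a serious obstacle: the whole argument is a direct transcription of the AIC case, with characteristic-$p$ perfectness supplying precisely the divisibility by arbitrary $p$-power roots needed to improve the conclusion from a profinite to a $p$-adic equivalence.
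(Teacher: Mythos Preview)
Your argument is correct and is essentially the paper's own proof: reduce to finite rank, induct on the rank, and kill the fiber term via \Cref{modpzero}. Two small points. First, the ind-regularity input you need is not the one recalled in \Cref{generalNizthmAIC}: that result (via \cite{Tem17}) is for absolutely integrally closed valuation rings, whereas a perfect characteristic-$p$ valuation ring need not be AIC; the paper instead invokes Temkin's purely inseparable local uniformization \cite{Temkin13} to see that such $V$ is ind-smooth over $\mathbb{F}_p$, hence ind-regular. Second, the paper applies \Cref{modpzero} only with $d=p$; once $\K(R \text{ on } \mathfrak m_V R)/p=0$ one gets $\K(R \text{ on } \mathfrak m_V R)/p^n=0$ for all $n$ by the usual cofiber sequences, so running through all $d=p^n$ is unnecessary (though harmless).
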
 
\begin{proof} 
This is proved exactly as in \Cref{generalNizthmAIC}, where we use
\Cref{modpzero} with $d = p$. Here we use purely inseparable local
uniformization \cite{Temkin13} to see that $V$ is ind-smooth over $\bb F_p$.
\end{proof} 

\begin{remark}[Motivic refinements]\label{motivic}
Let $X$ be a qcqs scheme over $\mathbb{F}_p$, and suppose that $X$ is the filtered
limit of a diagram of smooth $\mathbb{F}_p$-schemes along affine transition
maps. Then, defining motivic cohomology $\mathbb{Z}(i)^\sub{mot}(X)$ as the
filtered colimit of the motivic cohomologies of the smooth $\bb F_p$-schemes, we
see from \cite{FS02} and \cite{Lev08} (again by taking the filtered colimit), that $\K(X)$
admits a ``motivic filtration'' whose graded pieces $\gr^i \K(X)$ are
$\mathbb{Z}(i)^\sub{mot}(X)[2i]$ for $i\ge0$. When $\K$-theory admits such a motivic filtration it is natural to ask whether our results can be upgraded to filtered equivalences. For example, for $R$ a smooth algebra over a perfect valuation ring $V$ of characteristic $p$, the motivic refinement of Corollary \ref{caseperfectvalring} states that $\mathbb{Z}(i)^\sub{mot}(R)/p \xrightarrow{\sim} \mathbb{Z}(i)^\sub{mot}(R
\otimes_V \mathrm{Frac}(V))/p$ for all $i\ge0$; in this remark we show that this is indeed true.

Repeating the proof of Corollary \ref{caseperfectvalring}, it is enough to
    establish the following motivic variant of Proposition \ref{modpzero} (in
    the characteristic $p$ context): let $A$ be an ind-smooth local $\mathbb{F}_p$-algebra, such that $A$ is perfect and its maximal ideal $\mathfrak{m}_A$ is the radical of a finitely generated ideal. Then, for any smooth (or ind-smooth) $A$-algebra $R$, the canonical maps $\mathbb{Z}(i)^\sub{mot}( R   )/p \to \mathbb{Z}(i)^\sub{mot}(
\spec (R) \setminus V(\mathfrak{m}_AR))/p$ are equivalences for all $i\ge0$.

To prove this, we can assume that $R$ is local and essentially smooth over
$A$. Then, by the Geisser--Levine theorem \cite{GL00}, the motivic filtration on $\K( R; \mathbb{F}_p)$ is just the Postnikov filtration. Since we already know that $\K( R; \mathbb{F}_p) \xrightarrow{\sim} \K(\spec(R) \setminus V(\mathfrak{m}_AR); \mathbb{F}_p)$ by \Cref{modpzero}, it remains to to show that the motivic filtration on 
$\K(\spec(R) \setminus V(\mathfrak{m}_AR); \mathbb{F}_p)$ is also the Postnikov
filtration, or in other words that 
$\mathbb{Z}(i)^\sub{mot}(
\spec (R) \setminus V(\mathfrak{m}_AR))/p$ is concentrated in cohomological degree
$i$. In one direction, we know (again by \cite{GL00}) that 
$\mathbb{Z}(i)^\sub{mot}( \spec(R) \setminus V(\mathfrak{m}_AR))/p$
is concentrated in cohomological
degrees $\geq i$; it remains to prove the bound in the other direction.
Now we can write 
the pair $(A, \mathfrak{m}_A)$ as a filtered colimit of 
of  essentially smooth, local $\mathbb{F}_p$-algebras $(A_0,
\mathfrak{m}_{A_0})$ with maps $(A_0, \mathfrak{m}_{A_0}) \to (A,
\mathfrak{m}_A)$ such that $\mathfrak{m}_{A_0}$ generates $\mathfrak{m}_A$ up to
radical; we similarly write $R$ as a filtered colimit of 
algebras $R_0$ which are essentially smooth and local over such $A_0$. 
    Then the Gysin sequence in motivic cohomology (see~\cite[Thm.~15.15]{MVW})
\[\mathbb{Z}(i-d)^\sub{mot}( R_0/\mathfrak{m}_{A_0} )[-d]/p \to 
\mathbb{Z}(i)^\sub{mot} ( R_0)/p \to \mathbb{Z}(i)^\sub{mot}( \spec(R_0) \setminus V(\mathfrak{m}_{A_0} R_0))/p\] (where $d = \mathrm{dim}(A_0)$) shows that 
$\mathbb{Z}(i)^\sub{mot}( \spec(R_0) \setminus V(
\mathfrak{m}_{A_0} R_0))/p$ is concentrated in cohomological degrees $\le i$. Passing to the limit yields the same bound for $\mathbb{Z}(i)^\sub{mot}( \spec(R) \setminus V(\mathfrak{m}_AR))/p$ and so completes the proof.
\end{remark}

Although we will not need it, we record a final corollary of Proposition
\ref{modpzero} which extends Corollary \ref{caseperfectvalring} to smooth
algebras over arbitrary ind-smooth perfect domains. To bridge the gap between
the punctured spectrum of Proposition \ref{modpzero} and the full field of
fractions we must first prove the next lemma. For a noetherian spectral space
$X$ of finite Krull dimension and $x \in X$,
we let $X_x$ denote the space of all generizations of $x$, i.e., the
intersection of all open subsets containing $x$; note that $X_x$ is itself a noetherian spectral space with the subspace
topology. We recall also if $X$ is irreducible, then constant sheaves on $X$
have no higher cohomology \cite[Tag 02UU]{stacks-project}, so constant sheaves
and presheaves of spectra are the same, and constant sheaves are pushed forward from the
generic point.

\begin{lemma} 
\label{spectrallemma} 
Let $X $ be an irreducible, noetherian spectral space of finite Krull dimension. 
Let $\mathcal{G}$ be a sheaf 
of spectra on $X$; we extend $\mathcal{G}$ by continuity to all pro-open subsets of $X$. 
Suppose that for each $x\in X$, we have 
$\mathcal{G}( X_x) \xrightarrow{\sim} \mathcal{G}( X_x \setminus \left\{x\right\})$. 
Then $\mathcal{G}$ is constant. That is, for every nonempty open subset $U
\subset X$, we have $\mathcal{G}(X) \xrightarrow{\sim} \mathcal{G}(U)$.  
\end{lemma}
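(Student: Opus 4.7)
The plan is to reduce the lemma to the purely local assertion that every stalk $\mathcal{G}_x := \mathcal{G}(X_x)$ vanishes. Once that is in hand, the lemma follows at once: $X$ is a noetherian spectral space of finite Krull dimension, so the $\infty$-topos of sheaves of spectra on $X$ is hypercomplete, whence a sheaf of spectra with trivial stalks must itself be zero. Consequently $\mathcal{G}(U) = 0$ for every open $U \subset X$, and the restriction map $\mathcal{G}(X) \to \mathcal{G}(U)$ is trivially an equivalence for every nonempty such $U$.

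To prove $\mathcal{G}_x = 0$, I would induct on the height $h := \dim X_x$. In the base case $h = 0$, irreducibility of $X$ forces $x = \eta$, so $X_x \setminus \{x\} = \emptyset$ and the hypothesis reads $\mathcal{G}_\eta \simeq \mathcal{G}(\emptyset) = 0$. For the inductive step with $h \geq 1$, the hypothesis at $x$ identifies $\mathcal{G}_x$ with $\mathcal{G}(X_x \setminus \{x\})$. Observing that $X_x \setminus \{x\} = \bigcap_{U \ni x} (U \setminus \overline{\{x\}})$, this is a pro-open of $X$, and as an open subspace of the noetherian spectral space $X_x$ it is itself noetherian spectral of Krull dimension $\leq h-1$. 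Each $y \in X_x \setminus \{x\}$ is a proper generization of $x$, so any chain of specializations ending at $y$ extends by the specialization $y \leadsto x$ to a strictly longer chain ending at $x$; hence $\dim X_y < h$ and $\mathcal{G}_y = 0$ by induction. Then $\mathcal{G}|_{X_x \setminus \{x\}}$ has vanishing stalks on a noetherian spectral space of finite Krull dimension, and the hypercompleteness observation of the first paragraph applied to this smaller space yields $\mathcal{G}(X_x \setminus \{x\}) = 0$, completing the induction.

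The only nontrivial ingredient, and the place where the main obstacle lies, is the appeal to hypercompleteness of $\Shv(X;\mathrm{Sp})$ for $X$ noetherian spectral of finite Krull dimension; this follows from Grothendieck's bound on cohomological dimension applied termwise to the Postnikov tower. Everything else is bookkeeping, the only geometric input being that a proper generization strictly lowers the height, which is immediate from the definitions.
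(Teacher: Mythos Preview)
Your proof is correct for the lemma as literally stated, and the overall architecture---induction on the height of a point, combined with stalkwise detection via hypercompleteness of sheaves on a noetherian space of finite Krull dimension---is the same as the paper's.

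The one substantive difference is the treatment of the generic point. You apply the hypothesis at $x = \eta$: since $X_\eta = \{\eta\}$ and $X_\eta \setminus \{\eta\} = \emptyset$, this reads $\mathcal{G}_\eta \simeq \mathcal{G}(\emptyset) = 0$, and you then prove $\mathcal{G} = 0$ outright. The paper never invokes the hypothesis at $\eta$; instead it compares $\mathcal{G}$ to the constant sheaf with value $\mathcal{G}_\eta$ and shows, by the same induction, that every generization map $\mathcal{G}_x \to \mathcal{G}_\eta$ is an equivalence. So the paper in effect proves the stronger lemma in which the hypothesis is only required for $x \neq \eta$. This is not merely cosmetic: in the application immediately following (where the sheaf is $U \mapsto \K(\spec(R)\times_{\spec(A)}U;\mathbb{F}_p)$), the generic stalk is $\K(R\otimes_A\mathrm{Frac}(A);\mathbb{F}_p)$, which is certainly nonzero, so the $x=\eta$ case of the hypothesis is not actually available there. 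Your argument is valid for the lemma as written and adapts immediately to the stronger form by replacing ``all stalks vanish'' with ``all stalks agree with $\mathcal{G}_\eta$'' (using, as the paper does, that constant sheaves on irreducible spaces have the expected global sections).
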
 
\begin{proof} 
Let $\eta$ be the generic point of $X$, and let $\mathcal{G}_{\eta}$ denote the 
stalk of $\mathcal{G}$ at the generic point. We claim that $\mathcal{G}$ is the
constant sheaf (or presheaf) with value $\mathcal{G}_\eta$. 
 Note also that equivalences of sheaves of spectra can be detected
on stalks by our assumptions and  \cite[Cor.~7.2.4.20]{HTT}.

To prove the claim, we induct on the Krull dimension of $X$. 
For each $x \in X$ with $x \neq \eta$, we need to see that the generization map
$\mathcal{G}_x = \mathcal{G}(X_x) \to \mathcal{G}_\eta$ is an equivalence. 
However, 
by induction on the dimension of $X$, 
we find that $\mathcal{G}$ defines the constant presheaf on the pro-open subset $X_x \setminus
\left\{x\right\} \subset X$, which is an irreducible, noetherian spectral space
of smaller Krull dimension.\footnote{Geometrically, $X = \spec(A)$ for some
domain $A$, in which case $A$, in which case $X_x \setminus \left\{x\right\}$ corresponds to $\spec(A_{\mathfrak{p}})
\setminus \left\{\mathfrak{p}\right\}$ for some $\mathfrak{p} \in \spec(A)$.}
    Therefore, by induction, $\mathcal{G}_x \xrightarrow{\sim} \mathcal{G}(X_x \setminus
\left\{x\right\}) \xrightarrow{\sim}
\mathcal{G}_\eta$ as desired. 
\end{proof} 

\begin{corollary} 
\label{caseperfectdomain}
Let $A$ be a perfect integral domain which is ind-smooth over $\mathbb{F}_p$, and let $R$ be a smooth $A$-algebra. Then $\K(R; \mathbb{Z}_p) \to \K(R \otimes_A \mathrm{Frac}(A);
\mathbb{Z}_p)$ is an equivalence.
\end{corollary}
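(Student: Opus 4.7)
The plan is to combine Proposition \ref{modpzero} (with $d=p$) with the spectral Lemma \ref{spectrallemma}, after first reducing to a situation with noetherian underlying topological space. Write $A$ as a filtered colimit $A = \colim_j B_j^{\perf}$ of perfections of smooth finitely generated integral $\mathbb{F}_p$-algebras $B_j$; the smooth $A$-algebra $R$ descends to a smooth $B_{j_0}^{\perf}$-algebra $R_0$ for some index $j_0$. Since $A\setminus\{0\} = \bigcup_j(B_j^{\perf}\setminus\{0\})$, we have $\mathrm{Frac}(A) = \colim_j \mathrm{Frac}(B_j^{\perf})$, and the filtered-colimit preservation of $\K(-;\mathbb{F}_p)$ reduces us to the case $A = B^{\perf}$ for a single smooth finitely generated integral $\mathbb{F}_p$-algebra $B$.

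In that case $X := \Spec(A)$ has the same underlying topological space as $\Spec(B)$ and is therefore an irreducible noetherian spectral space of finite Krull dimension. Non-connective $\K$-theory satisfies Zariski descent on qcqs schemes, so the assignment $U\mapsto\K(R\times_{\Spec A}U;\mathbb{F}_p)$ on the quasi-compact opens of $X$ extends to a sheaf of spectra $\mathcal{G}$ on $X$, whose value on pro-opens is computed by continuity: for a prime $\mathfrak{p}\subset A$ corresponding to $x\in X$,
\[
\mathcal{G}(X_x) = \K(R\otimes_A A_\mathfrak{p};\mathbb{F}_p), \quad
\mathcal{G}(X_x\setminus\{x\}) = \K(\Spec(R\otimes_A A_\mathfrak{p})\setminus V(\mathfrak{p}(R\otimes_A A_\mathfrak{p}));\mathbb{F}_p).
\]

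The heart of the argument is to verify the stalkwise equivalence $\mathcal{G}(X_x)\xrightarrow{\sim}\mathcal{G}(X_x\setminus\{x\})$ at each $x=\mathfrak{p}$, so that Lemma \ref{spectrallemma} applies. The local ring $A_\mathfrak{p}$ is ind-regular (as a localization of the ind-smooth ring $A$), it is perfect, so that every element of its maximal ideal is a $p$-th power, and its maximal ideal $\mathfrak{p}A_\mathfrak{p}$ equals the radical of $\mathfrak{q}A_\mathfrak{p}$, where $\mathfrak{q}=\mathfrak{p}\cap B$ is a finitely generated ideal of the noetherian ring $B$. Proposition \ref{modpzero} with $d=p$ therefore applies to $A_\mathfrak{p}$ and the smooth $A_\mathfrak{p}$-algebra $R\otimes_A A_\mathfrak{p}$, yielding the desired equivalence after modding out by $p$.

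Lemma \ref{spectrallemma} then implies that $\mathcal{G}$ is constant; taking a filtered colimit over opens containing the generic point $\eta$ and using continuity yields $\K(R;\mathbb{F}_p) = \mathcal{G}(X)\xrightarrow{\sim}\mathcal{G}(\{\eta\}) = \K(R\otimes_A\mathrm{Frac}(A);\mathbb{F}_p)$. A mod-$p$ equivalence between $p$-complete spectra is itself an equivalence, so this promotes to the claimed equivalence of $p$-adic $\K$-theories. The main obstacle is the initial reduction: $\Spec(A)$ is in general not a noetherian topological space, and one needs the descent to $A=B^{\perf}$ for the hypotheses of both Lemma \ref{spectrallemma} (noetherian spectral, finite Krull dimension) and Proposition \ref{modpzero} (finite generation of $\mathfrak{q}$) to hold simultaneously.
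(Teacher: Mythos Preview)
Your proof is correct and follows essentially the same route as the paper: reduce to $A$ being the perfection of a smooth $\mathbb{F}_p$-domain so that $\Spec(A)$ is a noetherian irreducible spectral space, define the Zariski sheaf $U\mapsto\K(\Spec(R)\times_{\Spec A}U;\mathbb{F}_p)$, and combine \Cref{modpzero} (with $d=p$) at each stalk with \Cref{spectrallemma} to conclude constancy. You in fact spell out the verification of the hypotheses of \Cref{modpzero} at $A_{\mathfrak p}$ more explicitly than the paper does.
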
 
\begin{proof} 
Equivalently, the assertion is that $\K(R; \mathbb{F}_p) \xrightarrow{\sim}
\K(R[1/t]; \mathbb{F}_p)$ for any nonzero $t \in A$. 
As such, we may reduce to the case where $A$ is the perfection of a smooth domain over
$\mathbb{F}_p$. 
Then $\mathrm{Spec}(A)$ is a noetherian, irreducible spectral space of finite
Krull dimension. 
We have a sheaf of spectra $\sF$ on $\mathrm{Spec}(A)$ which sends an open
subset $U \subset \mathrm{Spec}(A)$ to $\sF(U) = \K( \spec(R) \times_{\spec(A)} U;
\mathbb{F}_p)$. \Cref{modpzero} (with $d = p$) and \Cref{spectrallemma} imply that $\sF$ is a constant presheaf. 
\end{proof} 

We now return to smooth algebras over valuation rings and prove the main
results of the section.

\begin{proposition} 
\label{Gvanishmodp}
Let $V$ be a perfect valuation ring of characteristic $p$, let $t \in V$ be nonzero, and let $\overline{R}$ be a smooth $V/t$-algebra. Then $\G( \overline{R}; \mathbb{Z}_p) = 0$. 
\end{proposition}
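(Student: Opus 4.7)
The plan is to exhibit $\G(\overline R)$ as the fiber of a map between $\K$-theories of smooth $V$-schemes, and then apply \Cref{caseperfectvalring}. I will present $\overline R = (V/t)[x_1, \ldots, x_n]/(\bar f_1, \ldots, \bar f_m)$ where, by the Jacobian criterion, the ideal of $\overline R$ generated by the $m \times m$ minors of $(\partial \bar f_j/\partial x_i)$ is the unit ideal. Lifting each $\bar f_j$ to $f_j \in V[x_1, \ldots, x_n]$ and setting $R := V[x_1, \ldots, x_n]/(f_1, \ldots, f_m)$ produces a finitely presented $V$-algebra with $R/t = \overline R$. Letting $J \subset R$ denote the analogous Jacobian ideal, one has $J + tR = R$, so the open subscheme $U := \Spec R \setminus V(J)$ is a quasi-affine smooth $V$-scheme containing $V(tR) = \Spec \overline R$ as a closed subscheme.

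Next, I will apply a scheme-level generalization of \Cref{Quillendevcoherent} to the pair $(U, \Spec \overline R)$. Since $U$ is smooth over the weakly regular stably coherent valuation ring $V$, it is locally weakly regular stably coherent; since $t$ is a nonzerodivisor on $U$, the closed subscheme $V(tU) = \Spec \overline R$ is regularly embedded in $U$. D\'evissage, following the proof of \Cref{Quillendevcoherent}, identifies perfect complexes on $U$ supported on $V(tU)$ with the $\K$-theory of finitely presented $\overline R$-modules, yielding a fiber sequence
\[
\G(\overline R) \longrightarrow \K(U) \longrightarrow \K(U[1/t]).
\]

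To conclude, I will show the second map is a $p$-adic equivalence. Observe that $V[1/t]$ is itself a perfect valuation ring of characteristic $p$: localizations of valuation rings are valuation rings, and $v/t^n = (w/s^n)^p$ whenever $v = w^p$ and $t = s^p$, which exist by perfectness of $V$. Since $U[1/t]$ is smooth over $V[1/t]$, applying \Cref{caseperfectvalring}---extended from smooth affine $V$-algebras to qcqs smooth $V$-schemes by Zariski descent for nonconnective $\K$-theory---to both $(V, U)$ and $(V[1/t], U[1/t])$ gives
\[
\K(U; \mathbb{Z}_p) \xrightarrow{\sim} \K(U \otimes_V \mathrm{Frac}(V); \mathbb{Z}_p) \xleftarrow{\sim} \K(U[1/t]; \mathbb{Z}_p),
\]
so that $\K(U; \mathbb{Z}_p) \simeq \K(U[1/t]; \mathbb{Z}_p)$, forcing $\G(\overline R; \mathbb{Z}_p) = 0$. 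The main obstacle is the scheme-level extension of both \Cref{Quillendevcoherent} and \Cref{caseperfectvalring}, necessitated because the lift $R$ is smooth over $V$ only in a neighborhood of $\Spec \overline R$, not globally; these extensions should follow routinely from Zariski descent for nonconnective $\K$-theory together with the d\'evissage argument for the regular closed immersion $\Spec \overline R \hookrightarrow U$.
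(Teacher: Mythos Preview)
Your approach is structurally identical to the paper's: lift $\overline R$ to something smooth over $V$, apply the localization sequence of \Cref{Quillendevcoherent}, and conclude using \Cref{caseperfectvalring} for both $V$ and $V[1/t]$. The only difference is in the lifting step. The paper invokes \cite[Tag 07M8]{stacks-project}, which produces a smooth \emph{affine} $V$-algebra $R$ with $R/tR \cong \overline R$ directly; this makes \Cref{Quillendevcoherent} and \Cref{caseperfectvalring} apply verbatim and eliminates the need for the scheme-level extensions you flag as obstacles. Your hand-built lift only achieves smoothness on an open neighbourhood $U$ of $\Spec\overline R$, which is why you are forced into Zariski-descent arguments. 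Those arguments do go through, but note one small gap: a smooth $V/t$-algebra need not admit a single global presentation in which the $m\times m$ Jacobian minors generate the unit ideal (it is only Zariski-locally standard smooth), so you should either localize on $\overline R$ from the start or, more simply, take any finitely presented lift $R$ and let $U$ be the smooth locus of $\Spec R \to \Spec V$, which is open and contains $\Spec\overline R$ by flatness of smooth morphisms and the fibrewise criterion.
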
 
\begin{proof} 
We can lift $\overline{R}$ to a smooth $V$-algebra $R$ by \cite[Tag
07M8]{stacks-project}. The $V$-algebra $R$ is weakly regular stably coherent
(\Cref{smooth_over_val_are_nice}).
By \Cref{Quillendevcoherent} we have a localization sequence $\G(\overline{R}) \to \K(R) \to \K(R[1/t]) $, and the result now follows from
\Cref{caseperfectvalring} (for the valuation rings $V$ and $V[1/t]$) which shows $\K(R; \mathbb{Z}_p) \xrightarrow{\sim}
\K(R[1/t]; \mathbb{Z}_p)$. 
\end{proof} 

The next result establishes~\Cref{generalNizthm}(1).

\begin{theorem} 
\label{smoothoverperfdval}
    Let $\mathcal{O}$ be a perfectoid valuation ring and let $R$ be a smooth
    $\mathcal{O}$-algebra. Then the map 
    $\K( R; \mathbb{Z}_p) \to \K(R \otimes_{\mathcal{O}}
    \mathrm{Frac}(\mathcal{O}); \mathbb{Z}_p)$ is an equivalence. 
\end{theorem}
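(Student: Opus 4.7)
The plan is to split into two cases based on the characteristic of $\mathcal{O}$. If $\mathcal{O}$ is a perfect valuation ring of characteristic $p$, the conclusion is exactly \Cref{caseperfectvalring}, so the real work is in the mixed characteristic case. In that case, $\mathcal{O}$ is by the footnote the ring of integers of a perfectoid field, hence a rank-one valuation ring, and I may choose a pseudo-uniformizer $\varpi \in \mathcal{O}$ with $\varpi^p \mid p$; then $\mathrm{Frac}(\mathcal{O}) = \mathcal{O}[1/\varpi]$, so the goal is equivalent to showing $\K(R;\mathbb{Z}_p) \xrightarrow{\sim} \K(R[1/\varpi];\mathbb{Z}_p)$.

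To produce a fiber sequence computing this fibre, I would apply \Cref{Quillendevcoherent} to the ring $R$ (which is weakly regular stably coherent by \Cref{smooth_over_val_are_nice}) with $I = \varpi R$. This gives a fiber sequence
\[
\G(R/\varpi) \longrightarrow \K(R) \longrightarrow \K(R[1/\varpi]),
\]
so the theorem reduces to the vanishing $\G(R/\varpi;\mathbb{Z}_p) = 0$.

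The key idea to prove this vanishing is to tilt. By standard perfectoid theory, the tilt $\mathcal{O}^\flat = \lim_{\phi}\mathcal{O}/p$ is a perfect valuation ring of characteristic $p$, and there is a canonical isomorphism $\mathcal{O}/\varpi \cong \mathcal{O}^\flat/\tilde\varpi$ for an appropriate pseudo-uniformizer $\tilde\varpi \in \mathcal{O}^\flat$. Transported along this isomorphism, $R/\varpi$ becomes a smooth algebra over $\mathcal{O}^\flat/\tilde\varpi$. Now \Cref{Gvanishmodp}, applied to the perfect valuation ring $V = \mathcal{O}^\flat$, the element $t = \tilde\varpi$, and the smooth $V/t$-algebra $\overline{R} = R/\varpi$, yields precisely $\G(R/\varpi;\mathbb{Z}_p) = 0$, completing the proof.

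The main technical input is the tilting identification $\mathcal{O}/\varpi \cong \mathcal{O}^\flat/\tilde\varpi$ together with the ability to regard $R/\varpi$ as a smooth algebra over the characteristic-$p$ side, which is what makes \Cref{Gvanishmodp} (built ultimately on Temkin's local uniformization via \Cref{caseperfectvalring}) applicable. The rest is a formal localization and d\'evissage argument, and no new input beyond the results already established in the paper is needed.
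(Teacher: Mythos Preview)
Your proposal is correct and follows essentially the same route as the paper: reduce to mixed characteristic, apply the localization sequence of \Cref{Quillendevcoherent} to a pseudo-uniformizer, and kill the $\G$-theory term by tilting to invoke \Cref{Gvanishmodp}. The only cosmetic difference is that the paper takes $t$ to be a unit multiple of $p$ admitting $p$-power roots (so that $\mathcal{O}^\flat/t^\flat \cong \mathcal{O}/p$ directly), whereas you work with a general $\varpi$ with $\varpi^p \mid p$; both choices yield the same argument.
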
 
\begin{proof}
    We may assume that $\mathcal{O}$ is of mixed characteristic, since the positive
    characteristic case has already been handled in \Cref{caseperfectvalring}.
    Let $t=up\in \mathcal{O}$ be a unit
    multiple of $p$ admitting
    a compatible sequence of $p$-power roots (see~\cite[Lem.~3.9]{BMS1}) and let
    $t^\flat=(t,t^{1/p},t^{1/p^2},\dots)$ be the corresponding element of the
    tilt $\mathcal{O}^{\flat}$. Note that
    $\mathrm{Frac}(\mathcal{O})=\mathcal{O}[1/p]=\mathcal{O}[1/t]$. Then
    $\mathcal{O}^{\flat}$ is a perfect valuation ring of
    characteristic $p$ and the multiplicative untilting map $\#\colon
    \mathcal{O}^\flat\to \mathcal{O}$
    induces an isomorphism of rings $\mathcal{O}^\flat/t^\flat
    \mathcal{O}^\flat \cong \mathcal{O}/p\mathcal{O}$
    \cite[(2.1.2.2)]{CS19}. So we may view $R/tR\iso R/pR$ as a smooth
    $\mathcal{O}^\flat/t^\flat
    \mathcal{O}^\flat$-algebra, whence $\G(R/tR; \mathbb{Z}_p) = 0$ by
    \Cref{Gvanishmodp}. Then the localization sequence of
    \Cref{Quillendevcoherent} completes the proof.
\end{proof} 

We also record the following strengthening of \Cref{smoothoverperfdval}.

\begin{corollary}\label{corol_t-smooth}
    Let $\mathcal{O}$ be a perfectoid valuation ring, $t\in\mathcal{O}$ a non-zero element, and
    $R$ a $t$-torsion-free $\mathcal{O}$-algebra such that
    $\mathcal{O}/t\mathcal{O}\to R/tR$ is smooth. Then
    the
    map $\K( R; \mathbb{Z}_p) \to \K(R[1/t]; \mathbb{Z}_p)$ is an
    equivalence.
\end{corollary}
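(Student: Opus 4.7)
The plan is to deduce this from Theorem \ref{smoothoverperfdval} by replacing $R$ with a smooth $\mathcal{O}$-algebra whose $\K$-theory with support in $tR$ agrees with that of $R$. The key observation is that $\fib(\K(R) \to \K(R[1/t])) = \K(R \textrm{ on } tR)$ depends only on the stable $\infty$-category $\mathrm{Perf}(R \textrm{ on } tR)$, and, since every perfect complex supported on $V(t)$ is derived $t$-adically complete, base change along $R \to \hat{R}_t$ induces an equivalence $\mathrm{Perf}(R \textrm{ on } tR) \simeq \mathrm{Perf}(\hat{R}_t \textrm{ on } t\hat{R}_t)$ with the analogous category over the $t$-adic completion.

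Next I would produce a smooth $\mathcal{O}$-algebra $R_0$ with $R_0/tR_0 \cong R/tR$ by lifting a presentation of $R/tR$ as $(\mathcal{O}/t\mathcal{O})[X_1,\ldots,X_n]/(\bar f_1,\ldots,\bar f_m)$: the Jacobian criterion is preserved under any lift of the $\bar f_i$. A Hensel-type iteration, lifting a set of generators of $R/tR$ to $\hat{R}_t$ and correcting them via the smoothness of the lifted equations, produces an $\mathcal{O}$-algebra map $\hat{R_0}_t \to \hat{R}_t$ that is an isomorphism modulo $t$, hence an isomorphism by $t$-adic completeness together with $t$-torsion-freeness on both sides. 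Combined with the previous paragraph,
\begin{equation*}
\fib(\K(R) \to \K(R[1/t])) \simeq \K(\hat{R_0}_t \textrm{ on } t\hat{R_0}_t) \simeq \K(R_0 \textrm{ on } tR_0) \simeq \fib(\K(R_0) \to \K(R_0[1/t])),
\end{equation*}
using the $t$-adic locality of $\mathrm{Perf}(\cdot\textrm{ on }t)$ once again.

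Finally, I would apply our earlier results to the smooth $\mathcal{O}$-algebra $R_0$. In mixed characteristic the perfectoid valuation ring $\mathcal{O}$ is of rank one, so $\sqrt{t\mathcal{O}} = \sqrt{p\mathcal{O}}$ for any non-unit $t$, which gives $R_0[1/t] = R_0[1/p] = R_0 \otimes_{\mathcal{O}} \mathrm{Frac}(\mathcal{O})$; Theorem \ref{smoothoverperfdval} then produces the desired equivalence. In equal characteristic $p$, $R_0$ is weakly regular stably coherent by Corollary \ref{smooth_over_val_are_nice}, so Proposition \ref{Quillendevcoherent} provides a fiber sequence $\G(R_0/tR_0) \to \K(R_0) \to \K(R_0[1/t])$, and Proposition \ref{Gvanishmodp} gives $\G(R_0/tR_0; \mathbb{Z}_p) = 0$. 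The main technical obstacle is step two: since $R$ is only assumed $t$-torsion-free (not formally smooth over $\mathcal{O}$ and not $t$-adically complete), the reduction to $R_0$ must genuinely be carried out after $t$-adic completion, where the rigidity of smooth affine deformations supplies the required isomorphism $\hat{R}_t \cong \hat{R_0}_t$.
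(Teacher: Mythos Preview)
Your overall strategy matches the paper's: pass to the $t$-adic completion, replace $R$ by a genuinely smooth $\mathcal{O}$-algebra $R_0$ (the paper calls it $R'$) with the same completion, and then treat the smooth case directly. The reduction step is correct, and your handling of the characteristic~$p$ case via \Cref{Quillendevcoherent} and \Cref{Gvanishmodp} is fine (the paper instead applies \Cref{caseperfectvalring} to $\mathcal{O}$ and $\mathcal{O}[1/t]$ separately, which amounts to the same thing).

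The gap is in the mixed characteristic case. You assert that a mixed characteristic perfectoid valuation ring is of rank one, so that $\sqrt{t\mathcal{O}}=\sqrt{p\mathcal{O}}$ for every non-zero non-unit $t$. This is not true in general: the paper's definition (see the footnote to \Cref{perfectoidk}) only forces the localization $\mathcal{O}_{\sqrt{p\mathcal{O}}}$ to be the rank-one ring of integers of the perfectoid fraction field, while $\mathcal{O}$ itself may have further prime ideals strictly containing $\sqrt{p\mathcal{O}}$. Indeed, the paper's own proof treats the case $t\notin\sqrt{p\mathcal{O}}$ as a genuine third case (and the same distinction reappears in Case~4 of \Cref{generalreduction}). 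For such $t$ one has $R_0[1/t]\neq R_0[1/p]$, and \Cref{smoothoverperfdval} alone does not yield the conclusion.

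The paper closes this gap as follows: when $\mathcal{O}$ has mixed characteristic and $t\notin\sqrt{p\mathcal{O}}$, one has $p\mathcal{O}\subseteq p\mathcal{O}[1/t]\subseteq\mathcal{O}$, so $\mathcal{O}[1/t]$ is still $p$-adically complete and separated, hence again a perfectoid valuation ring. One then applies \Cref{smoothoverperfdval} to both $\mathcal{O}$ and $\mathcal{O}[1/t]$ and compares via the common fraction field to obtain $\K(R_0;\mathbb{Z}_p)\xrightarrow{\sim}\K(R_0[1/t];\mathbb{Z}_p)$.
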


\begin{proof}
    We may lift $R/tR$ to a smooth $\mathcal{O}$-algebra $R'$ by \cite[Tag
07M8]{stacks-project}. Then, by the infinitesimal lifting criterion for
    smoothness, we may lift the identification $R'/tR'=R/tR$ to a morphism
    $R'\to\widehat R$ where the hat denotes $t$-adic completion. This in turn
    induces a morphism $\widehat{R'}\to \widehat R$, which is an isomorphism
    since both sides are $t$-torsion-free and it is an isomorphism modulo $t$.
    Considering the following diagram in which the outer two squares
\[\xymatrix{
\K(R')\ar[r]\ar[d] & \K(\widehat{R'})\ar@{=}[r]\ar[d] & \K(\widehat R)\ar[d] & \K(R)\ar[l]\ar[d] \\
\K(R'[1/t])\ar[r]& \K(\widehat{R'}[1/t])\ar@{=}[r] & \K(\widehat{R}[1/t]) &\K(R[1/t])\ar[l]
}\]
    are homotopy cartesian, the problem reduces to showing that
    $\K(R';\mathbb{Z}_p)\xto\sim
    \K(R'[1/t];\mathbb{Z}_p)$. If $\mathcal{O}$ has positive characteristic then this follows from
    \Cref{caseperfectvalring} for both $\mathcal{O}$ and $\mathcal{O}[1/t]$. If
    $\mathcal{O}[1/t]=\mathcal{O}[1/p]$ then this follows from Theorem
    \ref{smoothoverperfdval}. It remains to treat the case that $\mathcal{O}$ is of mixed
    characteristic and that $t\not\in\sqrt{p\mathcal{O}}$. In this case,
    $p\mathcal{O}\subseteq p\mathcal{O}[1/t]\subseteq \mathcal{O}$, whence
    $\mathcal{O}[1/t]$ is $p$-adically complete
    and separated and thus is a perfectoid valuation ring. Therefore we conclude in
    this case by applying Theorem \ref{smoothoverperfdval} to both $\mathcal{O}$ and
    $\mathcal{O}[1/t]$.
\end{proof}

\section{Cdh sheaves on perfect schemes}
In this section we present some cdh-descent properties for localizing invariants on perfect schemes and on their Witt vectors.
We first recall the definition of the cdh-topology in the non-noetherian setting. 
Throughout, we will use the Nisnevich topology in the non-noetherian setting,
cf.~\cite[Sec.~3.7]{sag}. 

\begin{definition}[The cdh-topology]\label{def_cd_squares}
    An {\em abstract blow-up square} of schemes
    \begin{equation}\begin{gathered}  \xymatrix{
    Y' \ar[d]  \ar[r] &  X' \ar[d]^f  \\
    Y \ar[r]_i &  X 
    } \label{abssquare} \end{gathered}\end{equation}
    is a cartesian square where $i$ is a finitely presented closed embedding and
    $f$ is a proper finitely presented morphism inducing an isomorphism $X'\setminus
    Y'\stackrel\sim\to X\setminus Y$.
    The {\em cdh topology} on the category $\mathrm{Sch}^{\sub{qcqs}}$ of qcqs  schemes
    is the topology generated by the Nisnevich topology and by $\{Y\to X,\,X'\to X\}$ as one runs
    over all abstract blow-up squares of qcqs schemes. 
	We will also work with
    the restriction of these topologies to $\mathrm{Sch}_A^\sub{qcqs}$, the
    category of qcqs schemes over a base ring $A$.
\end{definition}

\begin{proposition}\label{prop_cdh_via_cd_structure}
    Let $A$ be a base ring    and let $\mathcal D$ be a complete $\infty$-category. If
    $E\colon\mathrm{Sch}_A^{\sub{qcqs},\op}\to\mathcal D$ is a Nisnevich sheaf, then the following are equivalent:
\begin{enumerate}
\item $E$ is a cdh sheaf;
\item $E$ sends all abstract blow-up squares in $\mathrm{Sch}^\sub{qcqs}_A$ to homotopy cartesian squares of $\mathcal D$;
\end{enumerate}
\end{proposition}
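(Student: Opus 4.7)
The plan is to reduce to Voevodsky's theorem on cd-structures. By construction, the cdh topology on $\mathrm{Sch}_A^{\sub{qcqs}}$ is generated by the union of two cd-structures: the Nisnevich cd-structure $P_{\sub{Nis}}$ and the cd-structure $P_{\sub{bl}}$ whose distinguished squares are the abstract blow-up squares of Definition~\ref{def_cd_squares}. Voevodsky's theorem, in the form suitable for qcqs schemes, asserts that for a complete, regular, and bounded cd-structure $P$, a presheaf valued in a complete $\infty$-category is a $P$-sheaf if and only if it sends the empty scheme to the terminal object and every distinguished square to a homotopy cartesian square. The goal is to verify the hypotheses for $P_{\sub{cdh}} := P_{\sub{Nis}} \cup P_{\sub{bl}}$ and then apply this criterion.

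Once the framework is in place, the equivalence $(1) \Leftrightarrow (2)$ is essentially formal. For $(2) \Rightarrow (1)$, since $E$ is a Nisnevich sheaf it sends Nisnevich squares to cartesian squares, and by $(2)$ it also sends abstract blow-up squares to cartesian squares; this covers all distinguished squares in $P_{\sub{cdh}}$, so Voevodsky's theorem delivers cdh descent. Conversely, for $(1) \Rightarrow (2)$, the family $\{X' \to X,\, Y \to X\}$ attached to an abstract blow-up square is by definition a cdh cover, so Voevodsky's theorem applied in the opposite direction forces $E$ to send the square to a pullback. Alternatively, this direction admits a direct argument via \v{C}ech descent for $U := X' \sqcup Y \to X$, exploiting that $Y \times_X Y = Y$ (since $i$ is a monomorphism) and that each higher fiber product $X' \times_X \cdots \times_X X'$ itself fits into an abstract blow-up square with $X'$ along a diagonal and $Y' \times_Y \cdots \times_Y Y'$, allowing one to collapse the cosimplicial totalization inductively to the finite pullback $E(X') \times_{E(Y')} E(Y)$.

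The main obstacle is the verification of the hypotheses of Voevodsky's theorem, especially boundedness of $P_{\sub{cdh}}$, in the non-noetherian qcqs setting where the original formulation does not directly apply. This is handled in the modern literature by working with \emph{finitary} cd-structures, reducing to finitely presented qcqs schemes via density and noetherian approximation arguments, and then passing to the limit; this step is the most technically involved but is by now well understood.
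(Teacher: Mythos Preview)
Your proposal is correct and takes essentially the same approach as the paper: both reduce the equivalence to Voevodsky's theorem on cd-structures, with the paper simply citing \cite[Cor.~5.10]{Voe2} and the modern treatment \cite[Thm.~3.2.5]{ahw1}, while you spell out the mechanics and the non-noetherian subtleties (finitary cd-structures, boundedness) that the cited references handle.
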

\begin{proof}
    Since we are already assuming that $E$ is a Nisnevich sheaf, the equivalence of
    (1) and (2) is a result of Voevodsky about
    cd-structures~\cite[Cor.~5.10]{Voe2}. We refer to~\cite[Thm.~3.2.5]{ahw1}
    for a modern treatment.
\end{proof}

The cdh sheaves of interest to us will satisfy the following excision property.

\begin{definition}[Excision]
    Given a base ring $A$ and a functor $\sF$ from the category of $A$-algebras to
    some stable $\infty$-category $\mathcal D$, we say that $\sF$
    \emph{satisfies excision} if the following holds. For every map of pairs
    $f\colon (B, I) \to (C, J)$, where $B$, $C$ are $A$-algebras and $I \subset B$, $J \subset C$ are ideals such that $f$ carries $I$ isomorphically onto $J$,
then $\sF$ carries the square (usually called a \emph{Milnor square})
\[ \xymatrix{
B \ar[d]  \ar[r] &  C \ar[d]  \\
B/I \ar[r] &  C/J,
}\]
to a homotopy cartesian square in $\mathcal D$.
\end{definition}

Given a ring $A$ (commutative as always), we will study
certain localizing invariants $\mathrm{Mod}_{\sub{Perf}(A)}(\catst)\to \mathcal D$,
where $\mathcal D$ is a stable $\infty$-category and
$\mathrm{Mod}_{\sub{Perf}(A)}(\catst)$ is the $\infty$-category of $A$-linear
stable $\infty$-categories, i.e., modules over the stably symmetric monoidal
$\infty$-category $\mathrm{Perf}(A)$ viewed as an algebra object of $\catst$.
See, for example, \cite[Rmk.~1.7]{LT19} or \cite[Sec.~3.2]{CMNN} for further details.
In this section we will be interested in the case
where $A$ is a perfect ring or its ring of Witt vectors.

Recall that an $\mathbb{F}_p$-scheme $X$ is called \emph{perfect} if the
absolute Frobenius $\phi\colon X\to X$ is an isomorphism. Given an arbitrary $\bb
F_p$-scheme $X$, its {\em perfection} is the scheme
$X_\sub{perf}:=\varprojlim_\phi X$. See \cite[Sec.~3]{bhatt-scholze-grass} for
an account of the theory of perfect schemes.

Here is the main theorem of this section, showing that localizing invariants of perfections are cdh sheaves.

\begin{theorem}\label{thm_cdhdesc}
    Let $A$ be a perfect $\mathbb{F}_p$-algebra, $\mathcal C$ a $W(A)$-linear stable
    $\infty$-category such that $\mathcal C[\tfrac1p]\simeq 0$, and
    $E\colon\Mod_{\sub{Perf}(W(A))}(\catst)\rightarrow\Dscr$ a
    localizing invariant of $W(A)$-linear stable $\infty$-categories valued in
    a stable $\infty$-category $\mathcal D$. Then there is a cdh sheaf $\mathcal
    F\colon\mathrm{Sch}^\sub{qcqs,op}_A\to\mathcal D$ characterised by \[ \mathcal
    F(\Spec B)=E(  \mathcal{C} \otimes_{\mathrm{Perf}(W(A))}
    \mathrm{Perf}(W(B_\sub{perf})) )\] for all $A$-algebras $B$; moreover,
    $\mathcal F$ satisfies excision.
\end{theorem}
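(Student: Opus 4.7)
I would set $\widetilde{\mathcal F}(B):=\mathcal C\otimes_{\mathrm{Perf}(W(A))}\mathrm{Perf}(W(B_\mathrm{perf}))$, landing in $W(A)$-linear stable $\infty$-categories, and define $\mathcal F:=E\circ\widetilde{\mathcal F}$. Since $E$ takes Verdier sequences to fiber sequences in $\mathcal D$, it suffices to show that $\widetilde{\mathcal F}$ sends Nisnevich distinguished squares, abstract blow-up squares, and Milnor squares of $A$-algebras to Verdier sequences of $\mathrm{Perf}(W(A))$-linear stable $\infty$-categories. The cdh sheaf property of $\mathcal F$ then follows from the first two conditions via \Cref{prop_cdh_via_cd_structure}, and the excision statement is the third.

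Nisnevich descent should be the most routine step. Perfection preserves étale morphisms of $\mathbb F_p$-algebras, and the $p$-typical Witt vector functor preserves étaleness (van der Kallen, Langer--Zink), so the functor $B\mapsto W(B_\mathrm{perf})$ carries a Nisnevich distinguished square of $A$-algebras to a Nisnevich distinguished square of affine $W(A)$-schemes. The Thomason--Trobaugh-style localization sequence then provides the desired Verdier sequence, which is preserved under $(-)\otimes_{\mathrm{Perf}(W(A))}\mathcal C$.

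For Milnor excision, the hypothesis $\mathcal C[\tfracp]\simeq 0$ is essential, since Milnor excision fails for general localizing invariants. My plan would be to apply the Land--Tamme excision criterion to $\widetilde{\mathcal F}$: given a Milnor square $(B,I)\to(C,J)$ of $A$-algebras, perfection yields a Milnor-like square of perfect $\mathbb F_p$-algebras whose ideals, after $p$-power-root closure, remain identified, and applying $W$ modulo $p^n$ produces a genuine Milnor square of $W_n(A)$-algebras for every $n\geqslant 1$. Since $\widetilde{\mathcal F}(B)$ is $p$-complete ($\mathcal C$ being $p$-torsion), it is the limit of its mod-$p^n$ truncations, and the Land--Tamme correction terms vanish at each stage thanks to the rigidity of Witt vectors of perfect rings, yielding the required Verdier sequence.

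The main obstacle is handling abstract blow-up squares. With Milnor excision for $\mathcal F$ already in hand, one natural route is the Kerz--Strunk--Tamme reduction of cdh descent to pro-cdh descent, exploiting the fact that perfection is insensitive to nilpotent thickenings so that the pro-system collapses to Nisnevich descent on perfect schemes. A second, more direct, route uses that $W(B_\mathrm{perf})$ is a perfectoid ring and appeals to arc- or quasi-syntomic descent for $p$-complete localizing invariants on perfectoid rings in the tradition of Bhatt--Morrow--Scholze and Bhatt--Scholze, together with the geometric observation that perfections of abstract blow-up squares of $\mathbb F_p$-schemes become arc-covers. The technical crux common to both approaches is to verify the required descent for an arbitrary localizing invariant of $p$-torsion $W(A)$-linear $\infty$-categories, rather than only for trace-theoretic ones where analogous descent results are already known.
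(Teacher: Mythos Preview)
Your outline is structurally sound and you correctly isolate abstract blow-up descent as the crux, but the proposal does not actually close that gap, and one of your reductions goes in the wrong direction. The paper's argument is concrete and different from both of your suggested routes. First, using $\mathcal C[\tfrac1p]\simeq 0$ one writes $\mathcal C$ as a filtered \emph{colimit} of $W_n(A)$-linear categories and thereby reduces to a fixed level $n$; your proposed reduction via expressing $\widetilde{\mathcal F}(B)$ as an inverse \emph{limit} of its mod-$p^n$ truncations does not help, since a localizing invariant need not commute with limits (whereas by the paper's convention it does commute with filtered colimits). At level $n$, one then invokes Bhatt--Scholze's $v$-descent theorem for $\mathrm{QCoh}$ on perfect schemes to see that applying $\mathrm{QCoh}(W_n((-)_{\mathrm{perf}}))$ to an abstract blow-up square yields a pullback of presentable $\infty$-categories; combined with the observation that $j^*j_*\simeq\mathrm{id}$ for the closed immersion (since $C\otimes^{\mathbb L}_B C\simeq C$ for any surjection of perfect rings $B\twoheadrightarrow C$), this makes the square of $\mathrm{Perf}$'s \emph{excisive} in Tamme's sense, and his excision theorem then gives a cartesian square after applying any localizing invariant. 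Your route~(a) via Kerz--Strunk--Tamme does not apply to an arbitrary localizing invariant in this non-noetherian setting; your route~(b) points at the right source (Bhatt--Scholze) but omits the crucial bridge from categorical $v$-descent to cartesian squares in $E$, which is precisely Tamme's criterion.

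For Milnor excision the paper again uses Tamme's Tor-unitality criterion rather than the full Land--Tamme machinery: the key input is once more $C\otimes^{\mathbb L}_B C\simeq C$ for surjections of perfect rings, which upgrades to the analogous statement over $W_n$ by base change along $\mathbb Z/p^n\to\mathbb Z/p$. A minor terminological point: what you need is that $\widetilde{\mathcal F}$ sends the relevant commutative squares to \emph{excisive squares} of stable $\infty$-categories (so that $E$ of the square is cartesian); ``Verdier sequence'' refers to a fiber/cofiber sequence, not a commutative square, so the phrasing in your first paragraph should be adjusted.
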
 
\begin{proof}
Firstly, by writing $\mathcal{C}$ as a colimit as in \cite[Prop.~2.15]{BCM20},
    we can assume that $\mathcal{C}$ is a $W_n(A)$-linear $\infty$-category for
    some $n  \gg 0$; here we implicitly use that a filtered colimit of cdh
    sheaves is a cdh sheaf, which follows from Proposition
    \ref{prop_cdh_via_cd_structure} as homotopy cartesian squares are preserved under
    filtered colimits, and since Nisnevich descent can be tested via Nisnevich
	 excision \cite[Th.~3.7.5.1]{sag}. The goal is to construct a cdh sheaf
     given on affines by  $\Spec B\mapsto E( \mathcal{C}
     \otimes_{\mathrm{Perf}(W_n(A))}
    \mathrm{Perf}(W_n(B_\sub{perf})))$; this has the advantage that it extends to qcqs
    $A$-schemes $X$ by replacing $W_n(B_\sub{perf})$ by the scheme
    $W_n(X_\sub{perf})$.

    We may moreover replace $E$ by $E(\mathcal{C}
    \otimes_{\mathrm{Perf}(W_n(A))}-)$ so that $E$ is now a localizing
    invariant of $\mathrm{Perf}(W_n(A))$-linear $\infty$-categories and our
    goal is to show simply that the
	 functor \[\mathrm{Sch}_A^\sub{qcqs}\to\mathcal D,\quad
    X\mapsto E(W_n(X_\sub{perf})):=E(\textrm{Perf}(W_n(X_\sub{perf})))\] is a cdh sheaf which satisfies excision.

The construction 
$X \mapsto E( W_n(X_{\mathrm{perf}}))$ satisfies Nisnevich
descent by a result of Thomason--Trobaugh \cite{TT90},
cf.~\cite[App.~A]{CMNN}. 
To verify cdh-descent, we need to show that any abstract blow-up square
\eqref{abssquare}

We prove this using Bhatt--Scholze's $v$-descent for quasi-coherent complexes on
perfect schemes \cite[Cor.~11.28]{bhatt-scholze-grass}. Since $Y\to X$ and $X'\to X$ are finitely presented, {\em loc.~cit.} implies that the square 
\[\xymatrix{
\mathrm{QCoh}(W_n(X_\sub{perf})) \ar[d]  \ar[r] &  \mathrm{QCoh}(W_n(X'_\sub{perf})) \ar[d]_{j^*}  \\
\mathrm{QCoh}(W_n(Y_\sub{perf})) \ar[r] &  \mathrm{QCoh}(W_n(Y'_\sub{perf})) 
}\]
    is a pullback of $\infty$-categories, where $j^*$ is pullback along
    $Y'_{\sub{perf}}\rightarrow X'_{\sub{perf}}$. Furthermore, the right adjoint $j_*$ of $j^*$ is fully faithful: indeed, $j^*j_*\simeq\mathrm{id}$ since $C\otimes^L_BC\simeq C$ for any surjection of perfect rings $B\to C$ \cite[Lem~3.16]{bhatt-scholze-grass}. That is, the square
\[\xymatrix{
\mathrm{Perf}(W_n(X_\sub{perf})) \ar[d]  \ar[r] &  \mathrm{Perf}(W_n(X'_\sub{perf})) \ar[d]  \\
\mathrm{Perf}(W_n(Y_\sub{perf})) \ar[r] &  \mathrm{Perf}(W_n(Y'_\sub{perf})) 
}\]
in $\mathrm{Mod}_{\sub{Perf}(W_n(A))}(\catst)$ is excisive in the sense of
    \cite[Def.~14]{tamme-excision} (note that all our schemes are qcqs, so $\mathrm{Ind}(\mathrm{Perf}(-))=\mathrm{QCoh}(-)$), and so Tamme's excision criterion \cite[Thm.~18]{tamme-excision} shows that 
\[\xymatrix{
E(W_n(X_\sub{perf})) \ar[d]  \ar[r] &  E(W_n(X'_\sub{perf})) \ar[d]  \\
E(W_n(Y_\sub{perf})) \ar[r] &  E(W_n(Y'_\sub{perf})) 
}\]
is indeed homotopy cartesian, as required to complete the proof of cdh-descent.

 Given an excision situation of
    $A$-algebras, $(B, I) \to (C, J)$, then
    $(B_\sub{perf},I':=\sqrt{IB_\sub{perf}})\to
    (C_\sub{perf},J':=\sqrt{JC_\sub{perf}})$ is an
    excision situation (note that the radical $I'$ is precisely the kernel of
    $B_\sub{perf}\to (B/I)_\sub{perf}$, and similarly for $J'$), and then so is
    $(W_n(B_\sub{perf}), W_n(I')) \to (W_n(C_\sub{perf}), W_n(J'))$. We claim
    that $W_n(B_\sub{perf})\to W_n(B_\sub{perf})/W_n(I')=W_n((B/I)_\sub{perf})$
    is Tor-unital in the sense of \cite[Def.~21]{tamme-excision}. Indeed, to
    show that the canonical map
    $W_n((B/I)_\sub{perf})\otimes_{W_n(B_\sub{perf})}^LW_n((B/I)_\sub{perf})\to
    W_n((B/I)_\sub{perf})$ is an equivalence it is enough to check after base
    change along $\mathbb Z/p^n\to\mathbb Z/p$, as which point we obtain the
    equivalence $(B/I)_\sub{perf}\otimes_{B_\sub{perf}}^L(B/I)_\sub{perf}\simeq
    (B/I)_\sub{perf}$ of \cite[Lem~3.16]{bhatt-scholze-grass}. It now follows from Tamme's excision condition \cite[Thm.~28]{tamme-excision} that
\[\xymatrix{
E(W_n(B_\sub{perf}))\ar[r]\ar[d] & E(W_n(C_\sub{perf}))\ar[d]\\
E(W_n((B/I)_\sub{perf}))\ar[r] & E(W_n((C/J)_\sub{perf}))\\
}\]
is indeed homotopy cartesian, as desired.
\end{proof}

\begin{question}
    Given an abstract blow-up square of qcqs $\bb F_p$-schemes as in
    \Cref{def_cd_squares} but without the assumption that $f\colon X' \to X$ is
	 finitely presented, is it true that applying
    $\mathrm{QCoh}(-_\sub{perf})$ gives a pullback square of
    $\infty$-categories?
	Under the additional assumption that $f:X'\to X$ is
    finitely presented this is precisely
    \cite[Cor.~11.28]{bhatt-scholze-grass}, which was used above. 
	\end{question}

The above result will be useful in reducing questions to henselian valuation
rings in light of the next result, which we quote for convenience.
We refer to \cite[Sec.~2.3]{EHIK} for an introduction to Jaffard's
notion of valuative dimension; note that the perfection of any finitely
generated $\mathbb F_p$-algebra has finite valuative dimension, given by its
Krull dimension (indeed, there is a one-to-one correspondence between the
valuation subrings of a field of characteristic $p$ and of its perfection, or
else more generally we could apply~\cite[Prop.~4, p.~54]{Jaffard}).
A cdh-sheaf on $\mathrm{Sch}_A^\sub{qcqs}$ is said to be \emph{finitary} if it preserves
filtered colimits of $A$-algebras. 

\begin{proposition}[{\cite[Cor.~2.4.19]{EHIK}}]\label{prop_finite_dimension}
If $A$ is an $\bb F_p$-algebra of finite valuative dimension, then
a map of finitary cdh-sheaves $\mathrm{Sch}_A^\sub{qcqs,op} \to \mathcal{S}$
(for $\mathcal{S}$ the $\infty$-category of spaces) which is
an equivalence on henselian valuation rings is an equivalence. 
\end{proposition}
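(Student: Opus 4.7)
The plan is to reduce the statement to the assertion that henselian valuation $A$-algebras form a conservative family of points for the $\infty$-topos of finitary cdh-sheaves on $\mathrm{Sch}_A^\sub{qcqs}$. Replacing the given map by its fiber (basepoint-by-basepoint in $\mathcal{S}$), the task becomes to show that a finitary cdh-sheaf $\mathcal{F}$ which is pointwise contractible on all henselian valuation $A$-algebras is itself trivial on every qcqs $A$-scheme.

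First I would exploit the finitary hypothesis to reduce to finitely presented affine $A$-schemes: every qcqs $A$-scheme is a cofiltered limit of finitely presented $A$-schemes with affine transition maps, and $\mathcal F$ preserves such limits by assumption. The hypothesis that $A$ has finite valuative dimension then ensures, via Jaffard's theory, that the valuative dimensions of all finitely presented $A$-algebras are uniformly bounded by the valuative dimension of $A$ plus the relative dimension, so the cdh-cohomological dimensions of the sites appearing in play are all finite. This implies that the $\infty$-topos of cdh-sheaves is locally of finite homotopy dimension and hence hypercomplete, so equivalences (or vanishing) can be detected on cdh-points.

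Next, I would show that henselian valuation $A$-algebras exhaust the cdh-points. The geometric input is a Riemann--Zariski style reduction: every henselian local ring appearing as a stalk of the cdh-topos on a finitely presented $A$-scheme admits, after passing to a suitable cdh-refinement, a domination by a henselian valuation $A$-algebra which is itself a filtered colimit of finitely presented $A$-algebras. Such a domination is produced by combining the classical fact that any local integral domain is dominated by some valuation subring of its fraction field with a blow-up argument realising this valuation via an abstract blow-up square, and concluding by henselization. An induction on the valuative dimension then propagates the argument, which is the step in which the finiteness of the valuative dimension is essential.

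The main obstacle will be this last geometric step: carrying out the reduction to henselian valuation rings coherently inside the cdh-topos, so that it produces a genuine conservative family of points rather than a local-ring-by-local-ring construction. The book-keeping rests on being able to control the cdh-cohomological dimension during the induction, which is exactly what the finite valuative dimension hypothesis guarantees. Once both ingredients are in place, namely the hypercompleteness from the first step and the henselian-valuation-ring conservative family from the second, the assumed pointwise vanishing of $\mathcal F$ propagates through every cdh-stalk and yields $\mathcal F(X) \simeq *$ for every qcqs $A$-scheme $X$, as required.
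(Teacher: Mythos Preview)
The paper does not give a proof of this proposition; it is simply quoted from \cite[Cor.~2.4.19]{EHIK}. Your outline is a reasonable sketch of the strategy carried out there: one shows that the cdh $\infty$-topos over a base of finite valuative dimension has finite homotopy dimension (hence is hypercomplete), and that its points are given by henselian valuation rings, so that equivalences of sheaves may be detected on stalks.

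One remark on your exposition: the opening reduction, replacing the map by its fiber ``basepoint-by-basepoint in $\mathcal{S}$'', is not well-posed for space-valued sheaves, since fibers depend on a choice of basepoint and the resulting object is not naturally a sheaf. Fortunately this step is unnecessary. In a hypercomplete $\infty$-topos with enough points, a morphism of sheaves is an equivalence if and only if it is so on every stalk; for a \emph{finitary} sheaf the stalk at the point corresponding to a henselian valuation $A$-algebra $V$ is exactly $\mathcal{F}(\Spec V)$, so the hypothesis applies directly. The remainder of your outline---bounding the valuative dimension of finitely presented $A$-algebras via Jaffard's theory, deducing finite cdh homotopy dimension and hence hypercompleteness, and identifying the points with henselian valuation rings through a Riemann--Zariski/blow-up argument---matches the architecture of the proof in \cite{EHIK}.
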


\section{Applications}
In this section, we complete the proofs of our main theorems concerning
algebraic $\K$-theory.

\subsection{Characteristic $p$}\label{ss_p}
We begin with two propositions about the $\K$-theory of perfect schemes; the results in mixed characteristic will then follow by an elaboration of the arguments. 

\begin{proposition} 
\label{KisKHperfect}
Let $R$ be a smooth algebra over a perfect $\mathbb{F}_p$-algebra $A$. Then the canonical map
$\K(R)\to \KH(R)$ is an equivalence.
\end{proposition}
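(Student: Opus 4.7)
The plan is to realise $F(R):=\fib(\K(R)\to\KH(R))$ as the value at $\Spec A$ of a finitary cdh sheaf on $\mathrm{Sch}_A^\sub{qcqs,op}$ produced by \Cref{thm_cdhdesc}, and then to show that this cdh sheaf vanishes by reducing to the case of a henselian valuation ring via \Cref{prop_finite_dimension}, where the vanishing will follow from \Cref{stablycohfflat}.

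First I would arrange that $A$ has finite valuative dimension, which is needed to invoke \Cref{prop_finite_dimension}. Since $R$ is of finite presentation over $A$, there exist a finitely generated $\bb F_p$-subalgebra $A_0\subset A$ and a smooth $A_0$-algebra $R_0$ with $R\simeq R_0\otimes_{A_0}A$. Writing $A$ as the filtered colimit of the subrings $A_1^\sub{perf}\subseteq A$ as $A_1$ ranges over the finitely generated $\bb F_p$-subalgebras with $A_0\subseteq A_1\subseteq A$, and using that $\K$, $\KH$, and hence $F$, are finitary, I reduce to the case $A=A_0^\sub{perf}$, in which $A$ has the same Krull (and valuative) dimension as $A_0$ and so has finite valuative dimension.

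Next I would apply \Cref{thm_cdhdesc} with $\mathcal{C}=\mathrm{Perf}(R)$, which is $W(A)$-linear via the composite $W(A)\to A\to R$ and satisfies $\mathcal{C}[1/p]\simeq 0$ since $R$ is an $\bb F_p$-algebra. The theorem produces a cdh sheaf $\mathcal{F}$ on $\mathrm{Sch}_A^\sub{qcqs,op}$ which is finitary, since $F$ is and the intermediate construction involves only the $p$-typical Witt vectors $W_n$ for a finite $n$. Using that $W(B_\sub{perf})$ is $p$-torsion free (so that $A\otimes^L_{W(A)}W(B_\sub{perf})\simeq B_\sub{perf}$) and that $R$ is flat over $A$, one identifies
\[\mathcal{F}(\Spec B)\simeq F(R\otimes_A B_\sub{perf})\]
for every $A$-algebra $B$; in particular $\mathcal{F}(\Spec A)\simeq F(R)$ because $A_\sub{perf}=A$.

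Finally, to prove $F(R)=0$ it suffices to prove $\mathcal{F}\simeq 0$. Applying \Cref{prop_finite_dimension} to each shifted $\Omega^\infty$-level of the spectrum-valued sheaf, this reduces to checking $\mathcal{F}(\Spec V)\simeq 0$ for every henselian valuation $A$-algebra $V$. For such a $V$, the perfection $V_\sub{perf}$ is a perfect valuation ring, being the filtered colimit of $V$ along Frobenius, so $R\otimes_A V_\sub{perf}$ is smooth over a perfect valuation ring and hence is weakly regular stably coherent by \Cref{smooth_over_val_are_nice}; \Cref{stablycohfflat} then forces $F(R\otimes_A V_\sub{perf})\simeq 0$. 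The main obstacle I anticipate is choosing $\mathcal{C}$ correctly so that the resulting cdh sheaf evaluates at $\Spec A$ to $F(R)$, and carrying out the base-change identification $\mathrm{Perf}(R)\otimes_{\mathrm{Perf}(W(A))}\mathrm{Perf}(W(B_\sub{perf}))\simeq\mathrm{Perf}(R\otimes_A B_\sub{perf})$; once that setup is in place, the remaining steps assemble directly from the tools already established.
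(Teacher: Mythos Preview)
Your proposal is correct and follows essentially the same route as the paper: reduce to $A$ of finite valuative dimension, apply \Cref{thm_cdhdesc} with $\mathcal{C}=\mathrm{Perf}(R)$ to obtain a finitary cdh sheaf, and then use \Cref{prop_finite_dimension} to reduce to henselian valuation rings, where \Cref{smooth_over_val_are_nice} and \Cref{stablycohfflat} finish the argument. The only cosmetic difference is that the paper takes $E=\K$ and compares the two sheaves coming from $\mathcal{C}=\mathrm{Perf}(R)$ and $\mathcal{C}=\mathrm{Perf}(R[T])$ (thus checking $\mathrm{NK}$-vanishing directly), whereas you package the comparison into the single localizing invariant $F=\fib(\K\to\KH)$; the base-change identification you flag is exactly the one the paper handles by citing \cite[Th.~4.8.4.6]{HA}.
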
 
\begin{proof} 
Taking a filtered colimit, it suffices to prove the result when $A$ is the perfection of a finitely generated $\mathbb F_p$-algebra.
\Cref{thm_cdhdesc}, with $\mathcal C=\mathrm{Perf}(R)$ and $\mathrm{Perf}(R[T])$
respectively, implies that the functors $X\mapsto \K(X_\sub{perf}\otimes_AR)$ and
$X\mapsto \K(X_\sub{perf}\otimes_AR[T])$, from finitely presented $A$-schemes to
spectra, are cdh sheaves; recall here that
$\mathrm{Perf}(X_\sub{perf})\otimes_{\mathrm{Perf}(A)}\mathrm{Perf}(R)\simeq
\mathrm{Perf}(X_\sub{perf}\otimes_AR)$ and similarly for $R[T]$, by e.g.,
\cite[Th.~4.8.4.6]{HA}. To check that the natural map $\K(R)\to \K(R[T])$ is an
equivalence, we therefore reduce by \Cref{prop_finite_dimension} to proving that
$\K(V_\sub{perf}\otimes_AR)\xto\sim \K(V_\sub{perf}\otimes_AR[T])$ for all henselian valuation rings $V$ under $A$; but that is a special case of \Cref{stablycohfflat}(2) since $V_\sub{perf}$ is a valuation ring.
\end{proof} 

\begin{question}[Cartier smooth algebras] 
Can \Cref{KisKHperfect} be extended to those $\mathbb{F}_p$-algebras $R$ which are
Cartier smooth in the sense of \cite{KM21}, i.e., those for which the cotangent
complex $L_{R/\mathbb{F}_p}$ is a flat $R$-module and the Cartier isomorphism
for de Rham cohomology holds?
\end{question} 

\begin{question}[Motivic refinement]
    Let $R$ be as in Proposition \ref{KisKHperfect}, or more generally Cartier
    smooth as in the previous question. Then the Geisser--Levine theorem 
    \cite{GL00}
    holds
    Zariski locally on $\Spec R$ by \cite{KM21}, and so the Zariski local Postnikov
    filtration on $\K(R;\bb F_p)$ deserves to be termed the ``motivic filtration''.
    Does the equivalence $\K(R;\bb F_p)\quis \K(R[T];\bb F_p)$ (following from
    Proposition \ref{KisKHperfect}) upgrade to a filtered equivalence, i.e.,
    are the canonical maps $R\Gamma_\sub{Zar}(\Spec R,\Omega^i_{\sub{log}})\to
    R\Gamma_\sub{Zar}(\Spec R[T],\Omega^i_{\sub{log}})$ equivalences?
    When $A$ is a perfect valuation ring, the answer is `yes': perfect
    valuation rings are ind-smooth \cite{Temkin13}, so the motivic filtration defined at the
    top of~\Cref{motivic} is $\AA^1$-invariant.
\end{question}

Regarding the $\K$-theory of perfect schemes themselves (rather than smooth
schemes over them), we record the following calculation since it has not
explicitly appeared previously; the cdarc topology, which is a completely
decomposed analogue of the arc topology \cite{BM21}, is defined in \cite{EHIK}.

\begin{corollary}
\begin{enumerate}
\item For any perfect qcqs $\bb F_p$-scheme $X$, there is a natural equivalence $\K(X;\mathbb Z/p^r)\simeq    R\Gamma_\sub{cdh}(X,\mathbb Z/p^r)$, where the right side denotes
    cdh cohomology on $\mathrm{Sch}^\sub{qcqs}_X$ of the constant sheaf $\mathbb
    Z/p^r$.
\item The presheaf $X\mapsto \K(X_\sub{perf};\mathbb Z/p^r)$ on
    $\Sch^{\sub{qcqs,op}}_{\mathbb F_p}$ satisfies cdarc descent.
\end{enumerate}
\end{corollary}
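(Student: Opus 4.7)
The plan is to prove (1) by comparing cdh sheaves on $\Sch_X^\sub{qcqs}$ via their values on henselian valuation rings, combining \Cref{thm_cdhdesc} with \Cref{prop_finite_dimension}, and then to deduce (2) from (1) by invoking the cdarc descent of torsion constant sheaves on perfect schemes established in \cite{EHIK}.

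For (1), fix a perfect qcqs $\mathbb{F}_p$-scheme $X$ and consider the presheaf $\sF\colon U\mapsto \K(U_\sub{perf};\mathbb{Z}/p^r)$ on $\Sch_X^\sub{qcqs,op}$. Applying \Cref{thm_cdhdesc} with $A=\mathbb{F}_p$, $\mathcal{C}=\mathrm{Perf}(\mathbb{F}_p)$ (which satisfies $\mathcal{C}[1/p]\simeq 0$ since $p=0$ in $\mathbb{F}_p$), and $E=\K(-;\mathbb{Z}/p^r)$ shows $\sF$ is a cdh sheaf of spectra; it is moreover finitary, since both $\K$-theory (by the convention in the paper) and perfection commute with filtered colimits.

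I would then evaluate $\sF$ on a henselian valuation ring $V$ over $X$. Its perfection $V_\sub{perf}$ is itself a local valuation ring, hence weakly regular stably coherent by \Cref{smooth_over_val_are_nice}, so $\K_{-i}(V_\sub{perf})=0$ for $i>0$ by \Cref{stablycohfflat}(1); \Cref{QHKthm} says $\K_i(V_\sub{perf})$ is a $\mathbb{Z}[1/p]$-module for $i>0$, and together with $\K_0(V_\sub{perf})=\mathbb{Z}$ (as $V_\sub{perf}$ is local) the universal coefficient theorem yields $\sF(V)\simeq \mathbb{Z}/p^r$ concentrated in degree zero. The natural unit transformation $\mathbb{Z}/p^r\to\sF$ induced by $1\mapsto [\mathcal{O}]\in\K_0$ is therefore a sectionwise equivalence on all henselian valuation rings over $X$.

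To conclude (1), I would reduce to the case $X=\Spec A$ where $A$ is the perfection of a finitely generated $\mathbb{F}_p$-algebra (hence of finite valuative dimension) by writing a general perfect qcqs $X$ as a cofiltered limit along affine transition maps and using that both sides are finitary. Then \Cref{prop_finite_dimension}, applied separately to each sheafy homotopy group of $\sF$, shows that $\sF$ is $0$-truncated as a cdh sheaf of spectra with $\pi_0^{\sub{sh}}\sF$ equal to the constant cdh sheaf $\mathbb{Z}/p^r$, whence $\sF\simeq R\Gamma_\sub{cdh}(-,\mathbb{Z}/p^r)$; evaluating at $X$ and using $X_\sub{perf}=X$ gives (1). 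For (2), part (1) applied over $X_\sub{perf}$ identifies the functor $X\mapsto\K(X_\sub{perf};\mathbb{Z}/p^r)$ on $\Sch^\sub{qcqs,op}_{\mathbb{F}_p}$ with $X\mapsto R\Gamma_\sub{cdh}(X_\sub{perf},\mathbb{Z}/p^r)$, and this latter satisfies cdarc descent by the results of \cite{EHIK} on cdh cohomology of torsion constant sheaves on perfect schemes. The main obstacle is the bookkeeping in (1): executing the Postnikov analysis at the level of sheaves of spectra and carefully performing the continuity reduction so that \Cref{prop_finite_dimension} (an assertion about $\mathcal{S}$-valued finitary cdh sheaves over a base of finite valuative dimension) actually applies to each $\pi_i^{\sub{sh}}\sF$.
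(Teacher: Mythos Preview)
Your approach to (1) is essentially the paper's: both use \Cref{thm_cdhdesc} to get a cdh sheaf, reduce by continuity to perfections of finite-type $\mathbb{F}_p$-algebras so that the base has finite valuative dimension, invoke \Cref{prop_finite_dimension}, and check on henselian valuation rings via \Cref{QHKthm} and \Cref{stablycohfflat}. The paper is marginally cleaner in that it applies \Cref{prop_finite_dimension} directly to the comparison map $R\Gamma_\sub{cdh}(-,\mathbb{Z}/p^r)\to\K((-)_\sub{perf};\mathbb{Z}/p^r)$ of cdh sheaves, rather than going through a Postnikov/homotopy-sheaf analysis; since you already have the unit map in hand, you can do the same and sidestep the bookkeeping you flag as the main obstacle.

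For (2) the routes diverge. The paper does not deduce (2) from (1); it actually proves (2) first, by verifying the hypotheses of the main theorem of \cite{EHIK} (the implication ``(1) $\Rightarrow$ (3)'') directly for $\K((-)_\sub{perf};\mathbb{Z}/p^r)$: it is a finitary cdh sheaf, takes values in $\mathcal{D}(\mathbb{Z}/p^r)_{\leq 0}$ by \Cref{QHKthm}, and satisfies Milnor excision by \Cref{thm_cdhdesc}. Your route---transport along (1) to $X\mapsto R\Gamma_\sub{cdh}(X_\sub{perf},\mathbb{Z}/p^r)$ and cite cdarc descent for that---is also valid, but be aware that what you actually need from \cite{EHIK} is the same abstract criterion applied to the constant cdh sheaf, not a ready-made statement about ``torsion constant sheaves on perfect schemes''; you also implicitly use that perfection carries cdarc covers to cdarc covers. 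The paper's route is more economical because the Milnor-excision input is already packaged in \Cref{thm_cdhdesc} and it does not require (1) as a prerequisite.
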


\begin{proof}
    Theorem~\ref{thm_cdhdesc} implies that $\K((-)_\sub{perf};\mathbb Z/p^r)$ is a
    cdh sheaf on $\Sch^{\sub{qcqs,op}}_{\mathbb F_p}$. For any qcqs $\bb F_p$-scheme $X$ there is a map
    $\bb Z/p^r=\K(\mathbb F_p;\mathbb Z/p^r)\rightarrow\K(X_\sub{perf};\mathbb Z/p^r)$, which
    induces a map $R \Gamma_{\mathrm{cdh}}(-, \mathbb{Z}/p^r) \rightarrow\K((-)_\sub{perf};\mathbb Z/p^r)$ of cdh
    sheaves. 
	 
	 Note that both these sheaves commute with filtered colimits of rings, the
	 latter because $\K$-theory is
	 finitary and the former by \cite[Tag
0737]{stacks-project}; furthermore, both these sheaves take values in
$\mathcal{D}(\mathbb Z/p^r)_{\leq 0}$, the latter by Theorem~\ref{QHKthm}. Since both these cdh sheaves satisfy Milnor excision (the latter by \Cref{thm_cdhdesc}), ``(1) implies (3)'' of the main theorem of~\cite{EHIK} therefore implies that they are both in fact cdarc sheaves. This completes the proof of part (2).

We have also reduced part (1) to the case that $X=\Spec A$ is the spectrum of a
perfect $\bb F_p$-algebra; writing $A$ as a filtered colimit of perfections of
finite type $\bb F_p$-algebras then allows us to even assume that $A$ has finite
valuative dimension. Then viewing $R\Gamma_\sub{cdh}(-,\bb Z/p^r)\to  \K((-)_\sub{perf};\mathbb
Z/p^r)$ as a map of cdh sheaves on $\mathrm{Sch}^\sub{qcqs}_A$,
\Cref{prop_finite_dimension} reduces the problem to checking that $\bb Z/p^r\to
\K(V_\sub{perf};\mathbb Z/p^r)$ is an equivalence for all henselian valuation rings $V$ under $A$. But $V_\sub{perf}$ is again a valuation ring, hence weakly regular
    stably coherent by \Cref{valstcoh} and so has no negative $\K$-groups by
	 \Cref{stablycohfflat}; then \Cref{QHKthm} shows that indeed $\bb
	 Z/p^r\stackrel{\simeq}\to \K(V_\sub{perf};\mathbb Z/p^r)$, as desired.
\end{proof}

In~\cite[Corollary~5.6]{bhatt-scholze-grass}, Bhatt and Scholze prove that if
$A$ is the perfection of a regular $\FF_p$-algebra, then  there is a
localization fiber sequence \[ \K(A) \to \K(W(A)) \to \K(W(A)[1/p])  .\]  More
precisely, they show that the natural map $\K(A)\rightarrow \K(\W(A)\text{ on
}pW(A))$ is an equivalence; this is used to define their determinant line bundle
on the Witt vector affine Grassmannian. We will prove more generally that this
assertion is true for any perfect $\FF_p$-algebra $A$, and even for algebras
over $W(A)$ satisfying a ``$p$-smoothness'' condition as in
\Cref{corol_t-smooth}.

\begin{proposition} 
\label{Kdevperfect}
Let $A$ be any perfect $\mathbb{F}_p$-algebra, and let $R$ be a $p$-torsion-free $W(A)$-algebra such that $A\to R/pR$ is smooth. Then the natural map $\K(R/pR)\to \K(R\textrm{ on }pR)$ is an equivalence, so there is a localization sequence $\K(R/pR) \to \K(R) \to \K(R[1/p])$.
\end{proposition}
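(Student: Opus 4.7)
The plan is to mirror the proof of \Cref{corol_t-smooth}, substituting the cdh-descent machinery of \Cref{thm_cdhdesc} for the direct perfectoid-valuation arguments used there. First, exactly as in the proof of \Cref{corol_t-smooth}, I use \cite[Tag 07M8]{stacks-project} to lift $R/pR$ to a smooth $W(A)$-algebra $R'$ and produce via the infinitesimal lifting criterion a morphism $R' \to \widehat R$ which is an isomorphism after $p$-adic completion. Since both $\mathrm{Perf}(-/p)$ and $\mathrm{Perf}(-\textrm{ on }p)$ depend only on the $p$-adic completion of the input ring, I may replace $R$ by $R'$ and assume $R$ is itself smooth over $W(A)$.

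With $R$ smooth over $W(A)$, the $W(A)$-linear stable $\infty$-categories $\mathcal{C}_1 := \mathrm{Perf}(R/pR)$ and $\mathcal{C}_2 := \mathrm{Perf}(R\textrm{ on }pR)$ are both $p$-nilpotent, so \Cref{thm_cdhdesc} produces finitary cdh sheaves
\[
F_1(X) := \K\bigl((R/pR) \otimes_A \mathcal{O}(X_\sub{perf})\bigr), \qquad F_2(X) := \K\bigl(R \otimes_{W(A)} W(\mathcal{O}(X_\sub{perf}))\textrm{ on }p\bigr)
\]
on $\mathrm{Sch}^\sub{qcqs,op}_A$, where the flatness of $R/pR$ over $A$ and of $R$ over $W(A)$ lets me replace the derived tensor products in \Cref{thm_cdhdesc} by ordinary ones. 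The d\'evissage equivalence $\mathcal{C}_1 \xrightarrow{\sim} \mathcal{C}_2$ induces a natural transformation $F_1 \to F_2$ whose evaluation at $\Spec A$ is precisely the map $\K(R/pR) \to \K(R\textrm{ on }pR)$ that I wish to prove is an equivalence.

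To verify $F_1 \xrightarrow{\sim} F_2$, I write $A$ as a filtered colimit of perfections of finitely generated $\mathbb{F}_p$-subalgebras and descend $R$ accordingly (by first descending the smooth $A$-algebra $R/pR$ and then reapplying \cite[Tag 07M8]{stacks-project} to lift over the Witt vectors of the subalgebra); by finitariness of $F_1, F_2$, this reduces to the case that $A$ has finite valuative dimension. Then \Cref{prop_finite_dimension} further reduces the problem to verifying $F_1(V) \xrightarrow{\sim} F_2(V)$ for every henselian valuation ring $V$ under $A$. Since $A$ has characteristic $p$, so does $V$, and $V_\sub{perf}$ is again a valuation ring; the algebra $S := R \otimes_{W(A)} W(V_\sub{perf})$ is then a $p$-torsion-free smooth $W(V_\sub{perf})$-algebra with $S/pS$ smooth over the valuation ring $V_\sub{perf}$. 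By \Cref{smooth_over_val_are_nice}, $S/pS$ is weakly regular stably coherent, so \Cref{locregdivisor} supplies the localization fiber sequence $\K(S/pS) \to \K(S) \to \K(S[1/p])$; in particular, $\K(S/pS) \xrightarrow{\sim} \K(S\textrm{ on }p)$, which is exactly $F_1(V) \to F_2(V)$. Evaluating the resulting equivalence $F_1 \simeq F_2$ at $\Spec A$ yields $\K(R/pR) \xrightarrow{\sim} \K(R\textrm{ on }pR)$, and combining with the defining fiber sequence $\K(R\textrm{ on }pR) \to \K(R) \to \K(R[1/p])$ gives the stated localization sequence.

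The main obstacle I anticipate is the setup and clean identification of the two cdh sheaves via \Cref{thm_cdhdesc}, together with the noetherian approximation of $A$ needed to invoke \Cref{prop_finite_dimension} (which also forces the descent of $R$ as a smooth $W(A)$-algebra to one over the Witt vectors of a finitely generated perfect subalgebra). Once these foundational steps are in place, the valuation-theoretic core of the argument is just \Cref{locregdivisor} applied to smooth algebras over $W(V_\sub{perf})$ for $V_\sub{perf}$ a perfect valuation ring.
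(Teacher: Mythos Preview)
Your argument is correct and matches the paper's: both reduce (via filtered colimits) to $A$ of finite valuative dimension, set up the two cdh sheaves from \Cref{thm_cdhdesc} with $\mathcal{C}=\mathrm{Perf}(R/pR)$ and $\mathrm{Perf}(R\text{ on }pR)$, pass to henselian valuation rings via \Cref{prop_finite_dimension}, and conclude by \Cref{locregdivisor}. One phrasing correction: the functor $\mathcal{C}_1 \to \mathcal{C}_2$ is pushforward along $R \to R/pR$, not a ``d\'evissage equivalence'' of stable $\infty$-categories---establishing that it induces an equivalence on $\K$-theory is precisely the goal, so you should not label it as such at the outset.
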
 
\begin{proof}
Since we can replace $R$ by its $p$-completion, it suffices to assume that $R$
is $p$-complete. 
The functor that sends the perfect $\mathbb{F}_p$-algebra $A$ to 
isomorphism classes of $p$-complete $W(A)$-algebras $R$ with $R/p$ smooth over
$A$ commutes with filtered colimits in $A$, since it is also isomorphic to the
functor of smooth $A$-algebras. 
Using this observation, and the fact that 
$\K(R \text{ on } pR)$ commutes with filtered colimits in $p$-complete $R$, 
we may reduce to the case where $A$ is the perfection of a finitely generated
$\mathbb{F}_p$-algebra.  

\Cref{thm_cdhdesc}, with $\mathcal C=\mathrm{Perf}(R/pR)$ and
$\mathrm{Perf}(R\text{ on }pR)$ respectively, implies that there are cdh sheaves
on $\mathrm{Sch}^\sub{qcqs}_A$ given on affines by $\Spec B\mapsto
\K(B_\sub{perf}\otimes_AR/pR)$ and $\K(W(B_\sub{perf})\otimes_{W(A)}R\text{ on
    }p)$.\footnote{Here, and on occasion below, we break with our
    convention and replace for example $\K(W(B_\sub{perf})\otimes_{W(A)}R\text{ on
}p(W(B_\sub{perf})\otimes_{W(A)}R))$ by $\K(W(B_\sub{perf})\otimes_{W(A)}R\text{ on
    }p)$ for readability.} To prove that the natural map $\K(R/pR)\to \K(R\textrm{ on }pR)$ is an
equivalence, we therefore reduce by \Cref{prop_finite_dimension} to proving that
$\K(V_\sub{perf}\otimes_AR/pR)\xto\sim \K(W(V_\sub{perf})\otimes_{W(A)}R\text{ on
}p)$ for all henselian valuation rings $V$ under $A$. This follows from
\Cref{locregdivisor} with $t = p$. 
\end{proof}

\begin{corollary} 
\label{KisKHpadicWittperfect}
Let $A$ and $R$ be as in the statement of \Cref{Kdevperfect}. 
\begin{enumerate}
\item There is a localization sequence $\KH(R/pR)\to\KH(R)\to\KH(R[\tfrac1p])$.
\item The natural map $\K(R; \mathbb{Z}_p) \to \KH(R; \mathbb{Z}_p)$ is an equivalence. 
\end{enumerate}
\end{corollary}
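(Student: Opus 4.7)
For part (1), my plan is to run the proof of \Cref{Kdevperfect} verbatim with $\KH$ in place of $\K$. After replacing $R$ by its $p$-adic completion and a filtered-colimit reduction (both sides commute with filtered colimits) to the case where $A$ is the perfection of a finitely generated $\mathbb F_p$-algebra, \Cref{thm_cdhdesc} applied with $E=\KH$ and the $W(A)$-linear categories $\Perf(R/pR)$ and $\Perf(R\text{ on }pR)$---which are $p$-torsion so satisfy $\Cscr[\tfracp]\simeq 0$---yields cdh sheaves on $\mathrm{Sch}_A^\sub{qcqs}$ whose values on $\Spec B$ are $\KH(B_\sub{perf}\otimes_AR/pR)$ and $\KH(W(B_\sub{perf})\otimes_{W(A)}R\text{ on }p)$. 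By \Cref{prop_finite_dimension} the natural comparison is an equivalence as soon as it is on henselian valuation rings $V$ over $A$; but then $S:=W(V_\sub{perf})\otimes_{W(A)}R$ is $p$-torsion-free with $S/pS=V_\sub{perf}\otimes_AR/pR$ smooth over the valuation ring $V_\sub{perf}$, so \Cref{smooth_over_val_are_nice} and the $\KH$-half of \Cref{locregdivisor} applied to $(S,p)$ supply the required equivalence, yielding the fiber sequence for $\KH$.

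For part (2), the fiber sequences of \Cref{Kdevperfect} and of part (1) assemble into the diagram
\[\xymatrix{
\K(R/pR)\ar[r]\ar[d] & \K(R)\ar[r]\ar[d] & \K(R[\tfracp])\ar[d]\\
\KH(R/pR)\ar[r] & \KH(R)\ar[r] & \KH(R[\tfracp])
}\]
in which the rows are fibre sequences and the left vertical is an equivalence by \Cref{KisKHperfect}, since $R/pR$ is smooth over the perfect $\mathbb F_p$-algebra $A$. Consequently the right square is Cartesian, and the desired equivalence $\K(R;\mathbb Z_p)\xto{\sim}\KH(R;\mathbb Z_p)$ reduces to showing that $\K(R[\tfracp];\mathbb Z_p)\xto{\sim}\KH(R[\tfracp];\mathbb Z_p)$. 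Because $R[\tfracp]$ is a $\mathbb Z[\tfracp]$-algebra, I would deduce this from the $K(1)$-local identification $L_{K(1)}\K\simeq L_{K(1)}\KH$ (valid on any ring, since $K(1)$-local $\K$-theory is $\mathbb A^1$-invariant), combined with the fact that on a $\mathbb Z[\tfracp]$-algebra the $p$-completions $\K(-;\mathbb Z_p)$ and $\KH(-;\mathbb Z_p)$ are already $K(1)$-local; both ingredients fall under the $K(1)$-local results of \cite{BCM20, LMMT20} alluded to in the introduction.

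The cdh-descent step in part (1) is a direct transcription of the proof of \Cref{Kdevperfect}, so presents no new difficulty. The hard part is really part (2), and within it the one step that leaves the cdh-descent machinery developed in the paper: the $p$-adic comparison of $\K$ and $\KH$ on the generic fibre $R[\tfracp]$, which is a $\mathbb Z[\tfracp]$-algebra and so has to be handled by chromatic/$K(1)$-local input from elsewhere in the literature.
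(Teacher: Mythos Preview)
Your argument for part~(1) is correct but unnecessarily elaborate. The paper gets the $\KH$ localization sequence in one line: since $R[T_0,\dots,T_n]$ again satisfies the hypotheses of \Cref{Kdevperfect} (it is $p$-torsion-free and $(R/pR)[T_0,\dots,T_n]$ is smooth over $A$), apply \Cref{Kdevperfect} to each polynomial extension and take the geometric realization over $\Delta^{\op}$. Fiber sequences are preserved under colimits in $\Sp$, so this immediately yields $\KH(R/pR)\to\KH(R)\to\KH(R[\tfrac1p])$. There is no need to rerun the cdh-descent argument with $\KH$ in place of $\K$.

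For part~(2) your structure matches the paper's: compare the two localization sequences, invoke \Cref{KisKHperfect} on the closed fiber, and reduce to the generic fiber. The difference is in how you handle $\K(R[\tfrac1p];\mathbb{Z}_p)\to\KH(R[\tfrac1p];\mathbb{Z}_p)$. The paper simply cites \cite[Prop.~1.6]{WeibelKH}, which says that $\K(-;\mathbb{Z}/n)\to\KH(-;\mathbb{Z}/n)$ is an equivalence whenever $n$ is invertible in the ring; this is an elementary result (the $NK$-groups of a $\mathbb{Z}[1/p]$-algebra are uniquely $p$-divisible). Your $K(1)$-local detour is heavier and, as written, not quite justified: the references \cite{BCM20, LMMT20} are cited in the introduction for the statement $L_{K(1)}\K(R)\simeq L_{K(1)}\K(R[1/p])$, which is a different assertion. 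More seriously, the claim that $\K(-;\mathbb{Z}_p)$ is already $K(1)$-local on $\mathbb{Z}[1/p]$-algebras is (via $L_{K(1)}\K\simeq L_{K(1)}\KH$ and the $K(1)$-locality of $\KH(-;\mathbb{Z}_p)$) \emph{equivalent} to the equivalence $\K(-;\mathbb{Z}_p)\simeq\KH(-;\mathbb{Z}_p)$ you are trying to establish, so invoking it is circular unless you have an independent argument---and the most direct such argument is precisely Weibel's result. Replace the $K(1)$-local paragraph with a citation of \cite[Prop.~1.6]{WeibelKH} and the proof is complete; contrary to your closing remark, this is the \emph{easy} step, not the hard one.
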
 
\begin{proof} 
Part (1) follows by applying \Cref{Kdevperfect} to $R[T_0,\dots,T_n]$ for all $n\ge0$ and taking the geometric realisation.

Part (2) then follows by comparing the localization sequences in $\K(-;\mathbb
Z_p)$ and $\KH(-;\mathbb Z_p)$. Indeed we know that $\K(R/pR) \xrightarrow{\sim} \KH(R/pR)$ by \Cref{KisKHperfect}, while $\K(R[1/p]; \mathbb{Z}_p) \to \KH(R[1/p]; \mathbb{Z}_p)$ is an
equivalence thanks to \cite[Prop.~1.6]{WeibelKH}.
\end{proof} 

\begin{remark} 

\Cref{KisKHperfect} and \Cref{Kdevperfect} belong to a general class of results: for many
purposes, perfect rings and their rings of Witt vectors behave similarly to regular rings. 
For example, in \cite[Prop.~11.31]{bhatt-scholze-grass}, it is shown that perfectly
finitely presented perfect $\mathbb{F}_p$-algebras have finite global dimension.
The above results are indications of a similar phenomenon in algebraic
$\K$-theory. Note however that perfect rings can have nontrivial nonconnective
$\K$-theory.
\end{remark} 

The next result refines the classical \Cref{QHKthm}.

\begin{proposition} 
\label{perfectFpvanish}
Let $R$ be a smooth algebra of relative dimension $\le d$ over a perfect
$\mathbb{F}_p$-algebra. Then $\K(R; \mathbb{F}_p)$ is $d$-truncated. 
\end{proposition}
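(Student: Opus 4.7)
The plan is to combine the cdh-descent theorem for perfect schemes (\Cref{thm_cdhdesc}) with the generic-fibre reduction of \Cref{caseperfectvalring}, thereby reducing to the case of smooth algebras of dimension $\le d$ over perfect fields of characteristic $p$, where Geisser--Levine concludes. First, I would reduce to the case that $A$ is the perfection of a finitely generated $\mathbb{F}_p$-algebra, and hence has finite valuative dimension. This is achieved by writing $A$ as a filtered colimit of such rings and descending $R$ via standard approximation to a smooth algebra of the same relative dimension over some such base; since $\K(-;\mathbb{F}_p)$ is finitary and $d$-truncatedness is preserved under filtered colimits, this reduction is harmless. Applying \Cref{thm_cdhdesc} with $\mathcal{C} = \mathrm{Perf}(R)$ then produces a finitary cdh sheaf $\mathcal{F}$ on $\mathrm{Sch}^{\mathrm{qcqs}}_A$ characterised by $\mathcal{F}(\Spec B) = \K(R \otimes_A B_{\mathrm{perf}};\mathbb{F}_p)$, and in particular $\mathcal{F}(\Spec A) = \K(R;\mathbb{F}_p)$.

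Next I would analyse the values of $\mathcal F$ at henselian valuation rings. For such $V$ over $A$, the perfection $V_{\mathrm{perf}}$ is a perfect valuation ring of characteristic $p$, and $R \otimes_A V_{\mathrm{perf}}$ is smooth of relative dimension $\le d$ over it; \Cref{caseperfectvalring} then identifies
\[
\mathcal{F}(V) \simeq \K(R \otimes_A \mathrm{Frac}(V_{\mathrm{perf}});\mathbb{F}_p),
\]
the mod-$p$ $\K$-theory of a smooth scheme $S$ of dimension $\le d$ over the perfect field $\mathrm{Frac}(V_{\mathrm{perf}})$. To verify this is $d$-truncated, I would invoke the Geisser--Levine identification $\mathbb{F}_p(i) \simeq W_1\Omega^i_{\mathrm{log}}[-i]$ of Nisnevich sheaves: combined with $\Omega^i_S = 0$ for $i > d$ and the Krull-dimension bound $H^{>d}_{\mathrm{Nis}}(S, -) = 0$, this confines the $E_2$-page of the motivic spectral sequence
\[
E_2^{p,q} = H^{p-q}(S;\mathbb{F}_p(-q)) \Rightarrow \K_{-p-q}(S;\mathbb{F}_p)
\]
to the region $-q \in [0, d]$ and $p \in [0, d]$, so that every contribution to $\K_n(S;\mathbb{F}_p)$ with $n > d$ vanishes.

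Finally I would assemble the conclusion via the cdh-descent spectral sequence
\[
E_2^{s,t} = H^s_{\mathrm{cdh}}(\Spec A, \pi_t^{\mathrm{sh}}\mathcal{F}) \Rightarrow \pi_{t-s}\mathcal{F}(\Spec A) = \K_{t-s}(R;\mathbb{F}_p),
\]
which converges strongly since $A$ has finite valuative (and hence cdh cohomological) dimension. Viewing each $\pi_t^{\mathrm{sh}}\mathcal{F}$ as a finitary cdh sheaf of spectra concentrated in degree $0$, the previous step together with \Cref{prop_finite_dimension} forces $\pi_t^{\mathrm{sh}}\mathcal{F} = 0$ for $t > d$. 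For $n > d$, every $E_2^{s,t}$ with $t - s = n$ has $t = n + s > d$ and hence vanishes, yielding $\K_n(R;\mathbb{F}_p) = 0$ as required. \textbf{The main difficulty} will be this final descent argument---specifically, justifying that pointwise vanishing of $\pi_t\mathcal{F}$ on henselian valuation rings suffices for the vanishing of its cdh-sheafification, which rests on the finite valuative dimension of $A$ together with \Cref{prop_finite_dimension}.
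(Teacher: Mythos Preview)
Your argument is correct and coincides with the paper's \emph{alternative} proof, which uses \Cref{thm_cdhdesc} and \Cref{prop_finite_dimension} to reduce to perfect valuation rings and then invokes Geisser--Levine; the paper's \emph{primary} proof is more direct, using Zariski descent on $\Spec R$ (which is noetherian of finite Krull dimension once $A$ is perfectly finitely presented) and the Kelly--Morrow extension of Geisser--Levine \cite{KM21} to compute $\K_n(R_{\mathfrak p};\mathbb{F}_p)\simeq\Omega^n_{R_{\mathfrak p},\log}$ at each prime, without passing through the cdh machinery at all. Your extra reduction to $\mathrm{Frac}(V_{\mathrm{perf}})$ via \Cref{caseperfectvalring} is a pleasant variation but not required: once you are over a perfect valuation ring $V_{\mathrm{perf}}$, Temkin's ind-smoothness already makes $R\otimes_A V_{\mathrm{perf}}$ ind-smooth over $\mathbb{F}_p$, so Geisser--Levine applies directly.

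One technical simplification worth noting: your final spectral-sequence step is heavier than necessary, and the claim that each $\pi_t^{\mathrm{sh}}\mathcal{F}$ is a \emph{finitary} cdh sheaf (needed to invoke \Cref{prop_finite_dimension} as stated) is not obviously automatic from finitariness of $\mathcal{F}$ itself. It is cleaner to apply \Cref{prop_finite_dimension} directly to the finitary cdh sheaf of spaces $\Omega^{d+1}\Omega^\infty\mathcal{F}$ and the map from the point; this is exactly how the paper argues in the proof of \Cref{generalreduction}.
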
 
\begin{proof} 
We may assume by passage to a filtered colimit that the perfect base algebra
    $A$ is the perfection of a finitely generated $\mathbb{F}_p$-algebra. In
    this case, the Zariski topos of $R$ is hypercomplete since $\spec(R)$ has
    finite Krull dimension and is noetherian, \cite[Cor.~7.2.4.20]{HTT}.  We
    therefore reduce to showing that $\K(R_\mathfrak p;\mathbb F_p)$ is
    $d$-truncated for each prime ideal $\mathfrak p\subset R$. This may be
    shown in two ways. Either one has by \cite[Th.~2.1]{KM21} the analog of the Geisser--Levine
theorem \cite{GL00}, computing the mod-$p$ $\K$-groups of $R_\mathfrak p$ as the
    module of logarithmic differential forms, i.e., $\K_n(R_\mathfrak p;
    \mathbb{F}_p) \simeq \Omega^{n}_{R_\mathfrak p,\mathrm{log}}$ for $n\ge 0$;
    but these vanish when $n>d$. Alternatively, we could avoid
    \cite[Th.~2.1]{KM21} by instead using \Cref{thm_cdhdesc} and
    \Cref{prop_finite_dimension} to reduce to the case that $A$ is a perfect valuation
    ring, hence ind-smooth \cite{Temkin13}, and apply the usual Geisser--Levine
    theorem.
\end{proof} 

\subsection{Mixed characteristic}
Next we treat the mixed characteristic results.
Given a perfectoid valuation ring $\mathcal{O}$ of mixed characteristic we
briefly recall the tilting correspondence, for which we refer to
\cite[Sec.~3]{BMS1} and \cite[Sec.~2.1]{CS19} for further details. As in the
proof of \Cref{smoothoverperfdval} we may rescale $p$ by a unit so that it
admits a compatible sequence of $p$-power roots, and we let $t^\flat$ be the
corresponding of the tilt $\roi^\flat$ so that the untilting map induces an
isomorphism $\roi^\flat/t^\flat\xto\sim \roi/p$. For a
perfect $\mathcal{O}^\flat$-algebra $A$, we let $A^{\sharp} =
W(A)\otimes_{W(\mathcal{O}^\flat),\theta} \mathcal{O}$ denote its
\emph{untilt}. The untilt depends only on the (classical) $t^\flat$-adic
completion of $A$, and $A\mapsto A^\sharp$ establishes an equivalence between
$t^\flat$-adically complete and separated perfect $\mathcal{O}^\flat$-algebras
and perfectoid $\mathcal{O}$-algebras; this restricts to an equivalence between
perfect $\mathcal{O}^{\flat}$-algebras in which $t^\flat=0$ and perfectoid
(equivalently, perfect) $\mathcal{O}$-algebras in which $p = 0$.

The following result exemplifies the approach through which we may study localizing invariants over a mixed characteristic perfectoid valuation ring via the cdh topology over its tilt.

\begin{proposition}\label{generalreduction}
Let $E$ a localizing invariant of $H\mathbb Z$-linear stable $\infty$-categories valued in spectra, and fix $d,m\in\mathbb Z$; make the following
assumption:

\begin{quote}
For every perfectoid valuation ring $V$ and every smooth
$V$-algebra $R_V$ such that $V/pV\to R_V/pR_V$ has relative dimension $\le d$, the spectrum
$E(R_V\text{ on }pR_V)$ is $m$-truncated.
\end{quote}
Then, for every perfectoid valuation ring $\roi$, every perfectoid
    $\roi$-algebra $A$, and every smooth $A$-algebra $R$ such that $A/pA\to
    R/pR$ has relative dimension $\le d$, the spectrum $E(R\text{
        on }pR)$ is $m$-truncated.
\end{proposition}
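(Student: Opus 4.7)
The plan is to use the cdh-descent result \Cref{thm_cdhdesc} applied at the tilt $A^\flat$ of $A$, so as to reduce $m$-truncation to the case of henselian valuation rings over $A^\flat$; the (perfect) untilts of such valuation rings are then perfectoid valuation rings, where the hypothesis of the proposition applies directly. The characteristic-$p$ case is handled uniformly by the same argument (with $A^\flat=A$ and $V^\sharp=V$), so I would assume $\roi$ is of mixed characteristic and fix $t\in\roi$ a unit multiple of $p$ admitting a compatible system of $p$-power roots, with tilt $t^\flat\in\roi^\flat$.

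Before setting up the cdh sheaf, I would reduce to the case in which $A^\flat$ is the perfection of a finitely generated $\roi^\flat$-algebra, hence of finite valuative dimension. Writing $A^\flat$ as a filtered colimit of such perfections $A^\flat_\alpha$, letting $A_\alpha$ denote their untilts (perfectoid $\roi$-algebras), the finite presentation of $R$ allows a descent to a smooth $A_\alpha$-algebra $R_\alpha$ of the same relative dimension for $\alpha$ large enough. Since $E$ is finitary and $m$-truncation is preserved by filtered colimits, it then suffices to prove the result for each such pair $(A_\alpha,R_\alpha)$.

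Next I would apply \Cref{thm_cdhdesc} with the perfect base taken to be $A^\flat$ and $\mathcal{C}:=\mathrm{Perf}(R\text{ on }pR)$ as the $W(A^\flat)$-linear stable $\infty$-category (the action coming from $\theta\colon W(A^\flat)\to A\to R$; note $\mathcal{C}[\tfrac1p]\simeq 0$ since its objects are $p$-power torsion). This produces a finitary cdh sheaf $\mathcal{F}\colon\mathrm{Sch}^{\mathrm{qcqs},\op}_{A^\flat}\to\Sp$ with $\mathcal{F}(\Spec A^\flat)=E(R\text{ on }pR)$. The key identification is that for a henselian valuation ring $V$ under $A^\flat$, the value $\mathcal{F}(\Spec V)$ agrees with $E(R_V\text{ on }pR_V)$, where $V^\sharp:=W(V_\sub{perf})\otimes_{W(\roi^\flat),\theta}\roi$ is the untilt of the perfect valuation ring $V_\sub{perf}$ (which is a perfectoid valuation ring via the tilting correspondence for valuation rings) and $R_V:=R\,\widehat\otimes_A V^\sharp$ is $p$-completely smooth over $V^\sharp$ of the same relative dimension $\le d$. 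This identification rests on the compatibility $A\,\widehat\otimes_{W(A^\flat)}W(V_\sub{perf})\simeq V^\sharp$, together with the observation that base change of a category of $p$-power torsion perfect modules is insensitive to $p$-completion of the target ring.

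Granted this identification, the hypothesis of the proposition applied to $V^\sharp$ and $R_V$ shows that $\mathcal{F}(\Spec V)$ is $m$-truncated for every henselian valuation ring $V$ over $A^\flat$. Applying \Cref{prop_finite_dimension} to the truncation map $\mathcal{F}\to\tau_{\le m}\mathcal{F}$ of finitary cdh sheaves of spectra --- which is an equivalence on henselian valuation rings by the previous sentence --- then yields that this map is an equivalence globally, so $\mathcal{F}(\Spec A^\flat)=E(R\text{ on }pR)$ is $m$-truncated, as required. The main obstacle I expect is the identification $\mathcal{F}(\Spec V)\simeq E(R_V\text{ on }pR_V)$, which requires careful bookkeeping with the tilting correspondence, Witt vectors, $p$-completions, and base change of $W(A^\flat)$-linear stable $\infty$-categories; beyond this, the argument is a direct application of the cdh-descent machinery developed in the previous section.
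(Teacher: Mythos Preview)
Your overall strategy---applying \Cref{thm_cdhdesc} over the tilt with $\mathcal{C}=\mathrm{Perf}(R\text{ on }pR)$ and then using \Cref{prop_finite_dimension} to reduce to henselian valuation rings---is exactly the paper's. The gap is in your reduction to finite valuative dimension. You write $A^\flat=\colim_\alpha A^\flat_\alpha$ with each $A^\flat_\alpha$ perfectly finitely presented over $\mathcal{O}^\flat$, set $A_\alpha:=(A^\flat_\alpha)^\sharp$, and then claim to descend $R$ to a smooth $A_\alpha$-algebra. But untilting does not commute with filtered colimits: one only has $A\simeq(\colim_\alpha A_\alpha)^{\wedge_p}$, not $A\simeq\colim_\alpha A_\alpha$, so a finitely presented $A$-algebra cannot be descended directly. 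The paper's four-case structure exists precisely to repair this. Cases~3 and~4 use the Milnor-excision property of the cdh sheaf (also furnished by \Cref{thm_cdhdesc}) to reduce to $\mathcal{O}$ of rank one and $A$ \emph{$p$-torsion-free}; then in Case~2 one lifts $R/pR$ to a smooth algebra $R'$ over the uncompleted colimit $\colim_i B_i^\sharp$, and the $p$-torsion-freeness is what allows one to conclude $\widehat{R'}\cong\widehat{R}$, hence $\mathrm{Perf}(R\text{ on }pR)\simeq\mathrm{Perf}(R'\text{ on }pR')$, after which $R'$ descends honestly. Without first arranging $p$-torsion-freeness via excision, this lifting argument does not go through.

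A smaller point: the sheaf value at a henselian valuation ring $V$ under $A^\flat$ is $E((R\otimes_A V_{\mathrm{perf}}^\sharp)\text{ on }p)$ with the \emph{un}completed tensor product, so the base change is already genuinely smooth over $V_{\mathrm{perf}}^\sharp$ and your $p$-completed $R_V$ is not needed. The actual subtlety you should flag is that $V_{\mathrm{perf}}^\sharp$ need not be $p$-complete (hence not literally perfectoid); one replaces it by its $p$-completion and then, as in the proof of \Cref{corol_t-smooth}, swaps the resulting $p$-smooth algebra for an honest smooth lift before invoking the hypothesis.
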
 
\begin{proof} 
We treat a series of cases, culminating in a complete proof. For $\roi$, $A$,
    $R$ as in the statement of the proposition, \Cref{thm_cdhdesc} with
    $\mathcal C=\mathrm{Perf}(R\text{ on }pR)$ implies that there exists a cdh
    sheaf $\mathcal{F}_{\mathcal{O},A,R}$ on
    $\mathrm{Sch}^\sub{qcqs}_{A^\flat}$ given on affines by \[\Spec B\mapsto
    E(\mathrm{Perf}(R\text{ on }p R)
    \otimes_{\mathrm{Perf}(W(A^\flat))}\mathrm{Perf}(W(B_\sub{perf})))=
    E(B_\sub{perf}^\#\otimes_AR\text{ on }p),\] and that this sheaf satisfies
    excision.

{\em Case 1}: $p=0$ in $A$, i.e., $A$ is a perfect $\bb F_p$-algebra. Replacing
$\roi$ by $\roi/\sqrt{p\roi}$ we may clearly also suppose that $p=0$ in $\roi$,
i.e., that $\roi$ is a perfect valuation ring over $\bb F_p$. The assertion to
be proved is that $E(R)$ is $m$-truncated. Writing $\roi$ as a filtered colimit
of perfect valuation rings $\roi_i$ of finite rank, and then writing $A$ as a
filtered colimit of perfectly finitely presented $\roi_i$-algebras for varying
$i$, we reduce to the case that $\roi$ has finite rank and $A$ is a perfectly
finitely presented $\roi$-algebra. Therefore $A$ has finite valuative dimension
\cite[Corol.~2.3.3]{EHIK} and \Cref{prop_finite_dimension}  applies to the cdh
sheaf $\Omega^{m+1}\Omega^{\infty }\mathcal{F}_{\roi,A,R}$: so the desired
$m$-truncatedness of $E(R)$ follows from the assumed $m$-truncatedness of
$E(V_\sub{perf}\otimes_AR)$, as $V$ varies over henselian valuation rings
under $A$.

{\em Case 2}: $\roi$ is mixed characteristic of rank one, and $A$ is $p$-torsion-free. Similarly to the previous case, we begin by reducing to a finitely presented situation. Write the tilt $A^{\flat}$ as  a filtered colimit of perfectly finitely presented
$\mathcal{O}^{\flat}$-algebras $B_i^{\flat}$, so that $A$ is the
(classical) $p$-completion of the filtered colimit $\colim_iB_i^\sharp$ of the
perfectoid $\mathcal{O}$-algebras $B_i^\sharp$. Since $\colim_iB_i^\sharp$ is
$p$-torsion-free (otherwise its $p$-completion $A$ would also contain
$p$-torsion; this implicitly uses that the $p$-power torsion in any perfectoid
is killed by $p$, so that $\colim_iB_i^\sharp$ has bounded $p$-power torsion),
we may argue just as at the beginning of the proof of \Cref{corol_t-smooth} to
find a smooth $\colim_iB_i^\sharp$-algebra $R'$ equipped with an isomorphism
$\widehat{R'}\cong \widehat R$ of $A$-algebras, where the hats denote $p$-adic
completions. Since $\mathrm{Perf}(R'\text{ on }pR')\simeq
    \mathrm{Perf}(\widehat{R'}\text{ on }p\widehat{R'})\simeq \mathrm{Perf}(\widehat{R}\text{ on
    }p\widehat{R})\simeq \mathrm{Perf}(\widehat{R}\text{ on }p\widehat{R})$, the first and third
isomorphisms being \cite[Lem.~5.12]{BhattTannaka}, the problem reduces to showing that $E(R'\text{ on }pR')$ is
    $m$-truncated. Descending $R'$ to a smooth algebra over $B_i^\sharp$ for
    $i\gg0$ and taking the filtered colimit, we may henceforth assume in this
    case that $A$ is the untilt of a perfectly finitely presented
    $\roi^\flat$-algebra $B$.\footnote{$A^\flat$ is not in general equal to
    $B$, but rather to its $t^\flat$-adic completion in the notation of the
    opening paragraph of the subsection. We also remark that by replacing $A$
    by $B_i^\sharp$ in this fashion might mean that $A$ is no longer
    $p$-torsion-free, but this condition is not required in the remainder of
    the proof.} Since $\roi^\flat$ has finite rank (even rank one), therefore
    $B$ has finite valuative dimension; then the $m$-truncatedness of
    $E(R\text{ on }pR)$ follows as in Case 1, namely by applying
    \Cref{prop_finite_dimension} and using the hypothesis that
    $E(V_\sub{perf}^\#\otimes_AR\text{ on }p)$ is $m$-truncated as $V$ varies over henselian valuation rings under $A$.

{\em Case 3}: $\roi$ is mixed characteristic of rank one, but no conditions on $A$. The perfectoid ring $A$ fits into a Milnor square with surjective arrows
\[\xymatrix{
A\ar[r]\ar[d] & \overline A\ar[d] \\
A_0\ar[r] & \overline A_0
}\]
where $\overline A$ is a $p$-torsion-free perfectoid ring, and $A_0$ and
    $\overline A_0$ are perfect $\mathbb F_p$-algebras \cite[2.1.3]{CS19};
    tilting each term forms a Milnor square of perfect $\mathbb F_p$-algebras
    [loc.~cit]. The sheaf $\mathcal{F}_{\roi,A,R}$ satisfies excision, so the
    square
\[\xymatrix{
    E(R\text{ on }pR)\ar[r]\ar[d] & E(\overline A\otimes_AR\text{ on
    }p)\ar[d] \\
E(A_0\otimes_AR)\ar[r] & E(\overline A_0\otimes_AR)
}\]
is homotopy cartesian. The top right term is $m$-truncated by case 2, and the bottom two terms are $m$-truncated by case 1. Therefore the top left term is $m$-truncated.

{\em Case 4}: The general case. If $p=0$ in $\roi$ then we may appeal to case $1$. Otherwise, similarly to case 3, we use an excision trick, this time applied to the Milnor square of valuation rings
\[\xymatrix{
\roi\ar[r]\ar[d] & \roi_{\frak p}\ar[d]\\
\roi/\frak p\ar[r] & k(\frak p)
}\]
where $\frak p=\sqrt{p\roi}$. Note that $\roi_{\frak p}$ is a perfectoid valuation ring of mixed characteristic of rank one (argue as at the end of \Cref{corol_t-smooth} to see $p$-completeness), and the bottom two terms are perfect valuation rings. Base changing along $\roi\to A$, we claim that 
\[\xymatrix{
A\ar[r]\ar[d] & A\otimes_\roi\roi_{\frak p}\ar[d]\\
A\otimes_\roi\roi/\frak p\ar[r] & A\otimes_\roi k(\frak p)
}\]
is an Milnor square of perfectoid rings. Firstly, the terms are perfectoid rings
after $p$-adic completion by \cite[Prop.~2.1.11(2)]{CS19}; so the bottom two
terms, which are $\bb F_p$-algebras, are perfect. But $A\otimes_\roi\roi_{\frak
p}$ is already $p$-adically complete since $p(A\otimes_\roi\roi_{\frak
p})\subseteq A$ and $A$ is $p$-adically complete; so $A\otimes_\roi\roi_{\frak
p}$ is perfectoid. Secondly, 
the square is a Milnor square because it is a base-change of a Milnor square
and 
since $A \otimes_{\mathcal{O}}^{\mathbb{L}} \mathcal{O}/\mathfrak{p}$ is
discrete.\footnote{In fact, if $B_0, B_1, B_2$ are perfectoid rings with maps
$B_0 \to B_1, B_0 \to B_2$, the derived tensor product $B_1
\otimes_{B_0}^{\mathbb{L}} B_1$ has discrete derived $p$-completion; in fact,
this follows from the result for perfect $\mathbb{F}_p$-algebras
\cite[Lem.~3.16]{bhatt-scholze-grass}, which implies the analogous results for
their Wit vectors, and \cite[Lem.~3.13]{BMS1}.} 
Tilting each term forms a Milnor square of perfect $\mathbb F_p$-algebras
\cite[Prop.~2.1.4]{CS19}. The sheaf $\mathcal{F}_{\roi,A,R}$ satisfies excision, so the square
\[\xymatrix{
    E(R\text{ on }pR)\ar[r]\ar[d] & E(\overline A\otimes_AR\text{ on }p)\ar[d] \\
E(A_0\otimes_AR)\ar[r] & E(\overline A_0\otimes_AR)
}\]
is homotopy cartesian. The top right term is $m$-truncated by case 2, and the
bottom two terms are $m$-truncated by case 1. Therefore the top left term is
$m$-truncated.
\end{proof}

\begin{theorem} 
    \label{Invertingp}
Let $\roi$ be a perfectoid valuation ring, $A$ a perfectoid $\mathcal{O}$-algebra, and $R$ a smooth $A$-algebra such that $A/pA\to R/pR$ has relative dimension $\le d$. Then $\K(R; \mathbb{F}_p) \to \K(R[1/p]; \mathbb F_p)$ is $d$-truncated.
\end{theorem}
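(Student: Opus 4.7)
The plan is to apply \Cref{generalreduction} directly to the localizing invariant $E = \K(-;\mathbb{F}_p)$ on $H\mathbb{Z}$-linear stable $\infty$-categories, with truncation bound $m = d$. Since $\K$-theory is a localizing invariant and mod-$p$ reduction preserves this property, this is a legitimate input to \Cref{generalreduction}. What remains is to verify the hypothesis of that proposition: for every perfectoid valuation ring $V$ and every smooth $V$-algebra $R_V$ with $V/pV \to R_V/pR_V$ of relative dimension $\le d$, the spectrum $\K(R_V \text{ on } pR_V;\mathbb{F}_p)$ is $d$-truncated.

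The verification splits naturally into two cases according to the characteristic of $V$. In the equicharacteristic case, $V$ is a perfect $\mathbb{F}_p$-algebra and $p = 0$ in $V$, so the hypothesis reduces to $V \to R_V$ being smooth of relative dimension $\le d$; moreover $\Perf(R_V \text{ on } pR_V) = \Perf(R_V)$ because the support condition is vacuous. Thus it suffices to show $\K(R_V;\mathbb{F}_p)$ is $d$-truncated, which is precisely \Cref{perfectFpvanish}. In the mixed characteristic case, we invoke \Cref{smoothoverperfdval}: since $\mathrm{Frac}(V) = V[1/p]$ for such $V$, the map $\K(R_V;\mathbb{Z}_p) \to \K(R_V[1/p];\mathbb{Z}_p)$ is an equivalence, so $\K(R_V \text{ on } pR_V;\mathbb{Z}_p) = 0$ by the fiber sequence of \Cref{locregdivisor} (noting that smooth algebras over perfectoid valuation rings are weakly regular stably coherent by \Cref{smooth_over_val_are_nice}), and a fortiori $\K(R_V \text{ on } pR_V;\mathbb{F}_p) = 0$, which is vacuously $d$-truncated.

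Given the heavy lifting already performed in \Cref{generalreduction} — the reduction from arbitrary perfectoid $\mathcal{O}$-algebras to henselian valuation rings via cdh descent on the tilt, Milnor excision for perfectoid Milnor squares, and the finite-valuative-dimension trick — the proof of \Cref{Invertingp} itself is essentially an organized case analysis. The main obstacle is conceptual rather than technical: one must recognize that \Cref{smoothoverperfdval} (which gives an \emph{equivalence} in mixed characteristic) and \Cref{perfectFpvanish} (which only gives $d$-truncatedness in characteristic $p$) combine to yield a uniform $d$-truncated statement, with the characteristic-$p$ case providing the genuine bound and the mixed characteristic case providing vanishing. The whole strategy reflects the general philosophy used throughout the paper: reduce to the tilt, apply cdh descent, and evaluate on henselian valuation rings where classical methods suffice.
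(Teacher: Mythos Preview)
Your proof is correct and follows essentially the same approach as the paper: apply \Cref{generalreduction} with $E=\K(-;\mathbb{F}_p)$ and $m=d$, then verify the valuation-ring hypothesis via \Cref{perfectFpvanish} in characteristic $p$ and \Cref{smoothoverperfdval} in mixed characteristic. The only minor remark is that your citation of \Cref{locregdivisor} in the mixed characteristic case is unnecessary---the identification of $\K(R_V\text{ on }pR_V)$ with the fiber of $\K(R_V)\to\K(R_V[1/p])$ is just the Thomason--Trobaugh localization sequence, which requires no regularity hypothesis.
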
 
\begin{proof}
We are claiming that $\K(R \text{ on } pR ; \mathbb F_p)$ is $d$-truncated. It
    suffices to check that the hypothesis of \Cref{generalreduction} are
    satisfied, namely that $\K(R_V \text{ on } pR_V ; \mathbb F_p)$ is
    $d$-truncated whenever $R_V$ is a smooth algebra over a perfectoid
    valuation ring $V$ such that $V/pV\to R_V/pR_V$ has relative dimension $\le
    d$. This follows from \Cref{smoothoverperfdval} (in the case of mixed
    characteristic $V$) and \Cref{perfectFpvanish} (in the case of $V$ of
    characteristic $p$).
\end{proof} 

\begin{lemma} 
\label{KisKHvalring}
Let $V$ be a valuation ring and $R$ a smooth $V$-algebra. Then, for any $t\in
    V$, the map $\K(R \text{ on } tR) \to \KH(R \text{ on } tR)$
    is an equivalence.
\end{lemma}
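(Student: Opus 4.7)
The plan is to reduce directly to the case of smooth algebras over valuation rings, where the equivalence $\K \simeq \KH$ is already known, by comparing localization sequences.

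First, I would observe that the interesting case is $t \neq 0$, since if $t = 0$ then $\perf(R \text{ on } tR) = \perf(R)$ and the claim is immediate from $\Cref{smooth_over_val_are_nice}$ together with $\Cref{stablycohfflat}(2)$. When $t \neq 0$, the key preliminary point is that $V[1/t]$, a localization of a valuation ring at a multiplicative set, is itself a valuation ring (for any $x \in \mathrm{Frac}(V)^\times$, either $x$ or $x^{-1}$ lies in $V$, hence in $V[1/t]$). Consequently $R[1/t]$ is smooth over the valuation ring $V[1/t]$, and hence both $R$ and $R[1/t]$ are weakly regular stably coherent by $\Cref{smooth_over_val_are_nice}$.

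Next, by the very definition of $\perf(R \text{ on } tR)$ recalled in the introduction, one has an exact sequence of stable $\infty$-categories
\[
\perf(R \text{ on } tR) \longrightarrow \perf(R) \longrightarrow \perf(R[1/t]).
\]
Applying the two localizing invariants $\K$ and $\KH$ then yields a commutative diagram of fiber sequences
\[
\xymatrix{
\K(R \text{ on } tR) \ar[r]\ar[d] & \K(R) \ar[r]\ar[d] & \K(R[1/t])\ar[d] \\
\KH(R \text{ on } tR) \ar[r] & \KH(R) \ar[r] & \KH(R[1/t]).
}
\]
By $\Cref{stablycohfflat}(2)$ applied to the weakly regular stably coherent rings $R$ and $R[1/t]$, the middle and right vertical maps are equivalences. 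The long exact sequence on homotopy groups then forces the left vertical map $\K(R \text{ on } tR) \to \KH(R \text{ on } tR)$ to be an equivalence as well.

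There is essentially no obstacle here: the only subtlety is recognising that $V[1/t]$ remains a valuation ring so that $R[1/t]$ falls within the scope of $\Cref{smooth_over_val_are_nice}$; once this is noted, the statement is a formal consequence of the earlier results together with the localization property of $\K$ and $\KH$.
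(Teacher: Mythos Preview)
Your proof is correct and follows essentially the same approach as the paper: reduce via the Thomason--Trobaugh localization sequence to showing $\K \xrightarrow{\sim} \KH$ for $R$ and $R[1/t]$ separately, and then invoke \Cref{smooth_over_val_are_nice} and \Cref{stablycohfflat}(2). You have simply made explicit the points the paper leaves implicit (the case $t=0$, and the observation that $V[1/t]$ is again a valuation ring so that $R[1/t]$ falls under \Cref{smooth_over_val_are_nice}).
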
 
\begin{proof} 
By comparing $R$ and $R[1/t]$, we see that it suffices to prove the stronger statement that $\K(R)\xrightarrow{\sim} \KH(R)$ and $\K(R[\tfrac1t])\xrightarrow{\sim} \KH(R[\tfrac1t])$. This follows from \Cref{smooth_over_val_are_nice} and \Cref{stablycohfflat}. 
\end{proof} 

\begin{theorem} 
    \label{KisKHperfectoid}
Let $\roi$ be a perfectoid valuation ring, $A$ a perfectoid $\mathcal{O}$-algebra, and $R$ a smooth $A$-algebra. Then the canonical map $\K(R; \mathbb{Z}_p) \to \KH(R; \mathbb{Z}_p)$ is an equivalence.
\end{theorem}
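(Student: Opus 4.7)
The plan is to exhibit $\K(R;\mathbb Z_p)\to\KH(R;\mathbb Z_p)$ as an equivalence by comparing the localization fiber sequences associated with the open immersion $\Spec(R[1/p])\hookrightarrow\Spec(R)$. Since $\K$ and $\KH$ are both localizing invariants and the Verdier sequence $\mathrm{Perf}(R\text{ on }pR)\to\mathrm{Perf}(R)\to\mathrm{Perf}(R[1/p])$ is exact, we obtain after $p$-completion compatible fiber sequences
\[ \K(R\text{ on }pR;\mathbb Z_p)\to\K(R;\mathbb Z_p)\to\K(R[1/p];\mathbb Z_p), \]
\[ \KH(R\text{ on }pR;\mathbb Z_p)\to\KH(R;\mathbb Z_p)\to\KH(R[1/p];\mathbb Z_p), \]
linked by the natural transformation $\K\to\KH$. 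It therefore suffices to prove that the outermost vertical maps are equivalences.

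The right-hand map is an equivalence because $p$ is a unit in $R[1/p]$, by \cite[Prop.~1.6]{WeibelKH}, exactly as at the end of the proof of \Cref{KisKHpadicWittperfect}.

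To handle the left-hand map, I apply \Cref{generalreduction} to $E = \fib\bigl(\K(-;\mathbb Z_p)\to\KH(-;\mathbb Z_p)\bigr)$, which is a localizing invariant of $H\mathbb Z$-linear stable $\infty$-categories valued in spectra since $p$-completion and fibres of localizing invariants preserve the localizing property. The hypothesis required is that $E(R_V\text{ on }pR_V)$ be $m$-truncated for every smooth algebra $R_V$ over a perfectoid valuation ring $V$; this is supplied by \Cref{KisKHvalring} with $t=p$, which produces an equivalence $\K(R_V\text{ on }pR_V)\xto{\sim}\KH(R_V\text{ on }pR_V)$ integrally, hence $E(R_V\text{ on }pR_V)=0$. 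Invoking \Cref{generalreduction} for every $m\in\mathbb Z$ (with $d$ chosen as an upper bound on the relative dimension of $R$ over $A$) then forces $E(R\text{ on }pR)$ to be $m$-truncated for all $m$, so it vanishes, as desired.

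The only conceptual step is the decision to attack the closed piece $\K(R\text{ on }pR)\to\KH(R\text{ on }pR)$ via the cdh/tilting formalism behind \Cref{generalreduction}; this works precisely because the valuation-ring base case is already supplied by \Cref{KisKHvalring}, while the generic-fibre side is disposed of by Weibel's classical theorem. Once these two inputs are isolated the argument is largely formal, and no further reduction to Milnor squares of perfectoid rings, nor any coherence analysis of $A/pA$, is required.
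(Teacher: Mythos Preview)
Your argument is correct and follows the same strategy as the paper: reduce to the valuation-ring case via \Cref{generalreduction} together with \Cref{KisKHvalring}, and handle the generic fibre with Weibel's result. The only difference is cosmetic: the paper applies \Cref{generalreduction} to the single-variable functor $\mathrm{NK}(\mathcal{C})=\mathrm{cofib}\bigl(\K(\mathcal{C};\mathbb{F}_p)\to\K(\mathcal{C}\otimes\mathrm{Perf}(\mathbb{Z}[T]);\mathbb{F}_p)\bigr)$ to obtain $\K(R;\mathbb{Z}_p)\xrightarrow{\sim}\K(R[T];\mathbb{Z}_p)$, which then must be iterated (applying the theorem to $R[T_1,\dots,T_n]$) to reach $\KH$; you instead take $E=\mathrm{fib}(\K(-;\mathbb{Z}_p)\to\KH(-;\mathbb{Z}_p))$ directly, which is a harmless streamlining since $\KH$, as a filtered colimit of functors $\K(-\otimes\mathrm{Perf}(\mathbb{Z}[\Delta^n]))$, is again a localizing invariant.
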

\begin{proof} 
We consider the spectra-valued localizing invariant $\mathrm{NK}$ of $H\mathbb Z$-linear stable $\infty$-categories
\[\mathrm{NK}\colon\mathcal
C\mapsto\mathrm{hocofib}(\K(\mathcal C;\mathbb F_p)\to \K(\mathcal
    C\otimes_{\mathrm{Perf}(\mathbb Z)}\mathrm{Perf}(\mathbb Z[T]);\mathbb
    F_p).\] \Cref{KisKHvalring} (with $t=p$) shows that $\mathrm{NK}(V\text{ on
    }pV)$ vanishes on any valuation ring $V$, whence we may apply
    \Cref{generalreduction} with any value of $m$ and so deduce that
    $\mathrm{NK}(R\text{ on }pR)$ vanishes. In other words, the square
\[ \xymatrix{
\K(R; \mathbb{Z}_p) \ar[r] \ar[d]  &  \K(R[T]; \mathbb{Z}_p) \ar[d]  \\
\K(R[\tfrac1p]; \mathbb{Z}_p) \ar[r] &  \K(R[\tfrac1p][T]; \mathbb{Z}_p)
}\]
is homotopy cartesian. The bottom horizontal arrow is an equivalence since $\K(-;\bb Z_p)$ is homotopy invariant on $\bb Z[\tfrac1p]$-algebras \cite[Prop.~1.6]{WeibelKH}, so the top horizontal arrow is also an equivalence.
\end{proof} 

\begin{remark}[The $p$-smooth case]\label{rem_p_smooth}
Similarly to \Cref{corol_t-smooth}, Theorems \ref{Invertingp} and \ref{KisKHperfectoid} remain true more generally if the $A$-algebra $R$ is merely required to be $p$-smooth rather than smooth. Here we say that an $A$-algebra $R$ is {\em $p$-smooth} if it satisfies the following equivalent conditions:
\begin{enumerate}
\item there exists a smooth $A$-algebra $R'$ such that the derived $p$-adic completions\footnote{For the sake of clarify, we remark that the derived $p$-completion of an $A$-module $M$ is $\holim_nM/^{\mathbb L}p^n$ where $M/^{\mathbb L}p^n:=M\otimes_{\mathbb Z[T],T\mapsto p^n}^{\mathbb L}\mathbb Z$ is the Koszul complex associated to multiplication by $p^n$} of $R$ and $R'$ are equivalent as animated $A$-algebras;
\item the $A/pA$-algebra $R\otimes_A^{\mathbb L}A/pA$ is discrete and smooth.
\end{enumerate}

To obtain the stronger versions of the theorems from the smooth versions, we
    use characterisation (1) and the equivalence $\mathrm{Perf}(R\text{ on
    }pR)\stackrel{\sim}\to\mathrm{Perf}(\widehat R\text{ on }p\widehat{R})$, resulting from
    the equivalence $\widehat R\otimes^{\mathbb L}_RR/pR\simeq R/pR$ and a
    strong form of Thomason's excision theorem
    \cite[Lem.~5.12(2)]{BhattTannaka} (and similarly for $R'$ in place of $R$);
    here hats denote derived $p$-adic completions. In the case of Theorem
    \ref{KisKHperfectoid} we also use the analogous equivalences for $R[T]$ and
    $R'[T]$, as well as the fact that $\K(-;\mathbb Z_p)$ is homotopy invariant
    on $\mathbb Z[\tfrac1p]$-algebras.

Lacking a simple reference, we explain why conditions (1) and (2) are indeed
equivalent.\footnote{The proof works over any base ring $A$, though we remark
that if its $p$-power torsion is not bounded then the derived $p$-completions
appearing in (1) might not be discrete.} It is clear that $(1)$ implies $(2)$,
so suppose that $R$ is an $A$-algebra satisfying condition (2). Let $R'$ be any
smooth $A$-algebra lifting the smooth $A/pA$-algebra $R\otimes_A^{\mathbb
L}A/pA$ \cite[Tag 07M8]{stacks-project}. By derived deformation theory
(cf.~\cite[Sec.~3.4]{LurieDAG}) the canonical map $R'\to R'/pR'=R\otimes_A^{\mathbb L}A/pA=\widehat{R}\otimes_A^{\mathbb L}A/pA$ may be lifted to a morphism $R'\to \widehat{R}$, which induces a morphism $\widehat{R'}\to \widehat{R}$. The fibre $F$ of the latter morphism satisfies $F\otimes_A^{\mathbb L}A/pA\simeq0$; therefore $F\otimes_A^{\mathbb L}A/^{\mathbb L}p\simeq0$ (as $A/^{\mathbb L}p$ is a bounded complex with cohomologies being $A/pA$-modules), and so $F\simeq 0$ since it is derived $p$-complete.
\end{remark}

\begin{question}
Do Theorems \ref{Invertingp} and \ref{KisKHperfectoid} hold for any perfectoid ring $A$, without assuming that it is an algebra over a perfectoid valuation ring $\roi$? This assumption appeared in the proof of case 2 of \Cref{generalreduction}, where it was used to reduce to a situation of finite valuative dimension. Indeed, the theorems therefore remain true for any perfectoid ring $A$ which receives a map $A_0\to A$ from another perfectoid ring $A_0$ having the property that $A_0^\flat$ has finite valuative dimension.

\end{question}

\appendix

\section{Localization sequences via $t$-structures}

In the appendix, we explain how to deduce the d\'evissage results of Section 2
(and slight generalizations)
from 
the  theorem of the heart due to Barwick \cite{Bar15} and
Antieau--Gepner--Heller \cite{AGH}.  
Let $\mathcal{C}$ be a presentable stable
$\infty$-category equipped with a $t$-structure that is compatible with filtered
colimits. 
We say that $\mathcal{C}$ is \emph{regular coherent} if $\mathcal{C}$
is compactly generated and the $t$-structure
on $\mathcal{C}$ restricts to a \emph{bounded} $t$-structure on the compact
objects $\mathcal{C}^{\omega} \subset \mathcal{C}$. 
In particular, this implies that all compact objects are truncated, and that the
truncations of a compact object remain compact. 
Consequently, the compact objects of the
heart $\mathcal{C}^{\heartsuit}$ form an abelian category
$\mathcal{C}^{\heartsuit \omega}$, and $\mathcal{C}^{\heartsuit}$ is the
$\mathrm{Ind}$-completion of $\mathcal{C}^{\heartsuit \omega} $. 
For example, if $R$ is a weakly regular coherent ring, then the derived
$\infty$-category $\mathcal{D}(R)$ of $R$-module spectra with its usual
$t$-structure is regular
coherent. 

For each connective $\mathbb{E}_\infty$-ring $S$, one has the presentable stable $\infty$-category $\mathcal{C} \otimes S$ of
$S$-module objects in $\mathcal{C}$, which also inherits a $t$-structure where
connectivity and coconnectivity are checked along restriction of scalars
$\mathcal{C} \otimes S \to \mathcal{C}$. 
Let $\mathbb{S}[t]$ denote the suspension spectrum of the commutative monoid
$\mathbb{Z}_{\geq 0}$ and $\mathbb{S}[t_1, \dots, t_n] $ be the $n$-fold smash
product of $\mathbb{S}[t]$; similarly we define $\mathbb{S}[t_1^{\pm 1}, \dots,
t_n^{\pm 1}]$ by inverting the generators. 
We say that $\mathcal{C}$
is \emph{stably regular coherent} if $\mathcal{C} \otimes
\mathbb{S}[t_1, \dots, t_n]$ is regular coherent for each $n \geq 0$. 
For example, if $R$ is a weakly regular stably coherent ring, then
$\mathcal{D}(R)$  is stably regular coherent. 
An important example is that there is an analog of Hilbert's basis theorem: if $\mathcal{C}$ is regular coherent and
$\mathcal{C}^{ \heart \omega}$ is noetherian, then $\mathcal{C} $ is stably regular
coherent \cite[Cor.~3.17]{AGH}. 

\begin{theorem}[Barwick \cite{Bar15} and Antieau--Gepner--Heller \cite{AGH}] 
\label{thmofheart}
Let $\mathcal{C}$ be a presentable stable
$\infty$-category equipped with a $t$-structure which is compatible with
filtered colimits. Suppose $\mathcal{C}$ is stably regular coherent. 
Then 
the natural map identifies the (connective) $\K$-theory of the abelian category
$(\mathcal{C}^{\heartsuit})^{\omega}$ with the $\K$-theory of the stable
$\infty$-category $\mathcal{C}^{\omega}$. In particular, $\K_{i}(
\mathcal{C}^{\omega}) =0 $ for $i < 0$. 
\end{theorem}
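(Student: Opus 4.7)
The plan is to split the statement into two parts and handle them separately. The first part, that the natural map identifies $\K((\mathcal{C}^{\heartsuit})^{\omega})$ with $\K(\mathcal{C}^{\omega})$ in connective degrees, should follow from a direct application of Barwick's theorem of the heart to the small stable $\infty$-category $\mathcal{C}^{\omega}$. The second part, the vanishing $\K_i(\mathcal{C}^{\omega}) = 0$ for $i<0$, should be obtained by Bass delooping combined with the stably regular coherent hypothesis applied to the polynomial extensions $\mathcal{C} \otimes \mathbb{S}[t_1,\dots,t_n]$.

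For part (i), the first step is to unpack the regular coherent hypothesis. By assumption $\mathcal{C}$ is compactly generated, the $t$-structure on $\mathcal{C}$ restricts to a \emph{bounded} $t$-structure on $\mathcal{C}^{\omega}$, and the heart of this restriction is precisely the abelian category $(\mathcal{C}^{\heartsuit})^{\omega}$ (compact objects of the heart form an abelian category because truncations preserve compactness). This places us squarely in the hypotheses of Barwick's theorem of the heart \cite{Bar15}, which produces an equivalence $\K((\mathcal{C}^{\heartsuit})^{\omega}) \xrightarrow{\sim} \K(\mathcal{C}^{\omega})$ of connective $\K$-theory spectra. Strictly speaking one must identify the connective $\K$-theory in the sense of \cite{Bar15} with the connective cover of the nonconnective $\K$-theory as defined in \cite{bgt1}; this is standard.

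For part (ii), the strategy is to bootstrap from connective to nonconnective $\K$-theory using the Bass fundamental theorem in the form available for stable $\infty$-categories (see \cite[Thm.~6.1]{TT90} for the classical scheme-theoretic version, and the analogous result in the $\infty$-categorical framework). The fundamental theorem yields exact sequences of the form
\[
\K_i(\mathcal{C}^\omega \otimes \mathbb{S}[t]) \oplus \K_i(\mathcal{C}^\omega \otimes \mathbb{S}[t^{-1}]) \to \K_i(\mathcal{C}^\omega \otimes \mathbb{S}[t^{\pm 1}]) \to \K_{i-1}(\mathcal{C}^\omega) \to 0.
\]
The stably regular coherent hypothesis guarantees that each of the three tensor products $\mathcal{C} \otimes \mathbb{S}[t]$, $\mathcal{C} \otimes \mathbb{S}[t^{-1}]$, and $\mathcal{C} \otimes \mathbb{S}[t^{\pm 1}]$ is regular coherent, so by part (i) their $\K$-theories coincide with the $\K$-theories of the corresponding hearts, which are in particular connective. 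Taking $i = 0$, one reduces $\K_{-1}(\mathcal{C}^\omega) = 0$ to the surjectivity of $\K_0$ on heart-level abelian categories, which follows from a Grothendieck-style argument: the heart of $\mathcal{C} \otimes \mathbb{S}[t^{\pm 1}]$ (as a Serre quotient of the heart of $\mathcal{C} \otimes \mathbb{S}[t]$) receives all classes after passing through $\mathcal{C}^\omega$. Iterating this inductively through all negative degrees, using that all further polynomial extensions remain stably regular coherent, yields the desired vanishing.

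The main obstacle is the second step: extracting the Grothendieck-type surjectivity at the heart level from the abstract regular coherent hypothesis. Concretely, one needs to verify that the base-change maps between the abelian hearts of $\mathcal{C}^\omega \otimes \mathbb{S}[t^{\pm}]$, together with appropriate devissage for the Serre subcategories of $t$-torsion and $t$-inverted objects, yield the required surjectivity on $\K_0$. This is exactly the content of \cite[Cor.~3.17]{AGH} and the surrounding arguments, and once this step is in place the induction runs automatically. The first part, by contrast, is an essentially formal application of the theorem of the heart to a bounded $t$-structure on a small stable $\infty$-category.
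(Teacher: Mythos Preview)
Your treatment of part (i) is correct and matches the paper: the regular coherent hypothesis gives a bounded $t$-structure on $\mathcal{C}^{\omega}$ with heart $(\mathcal{C}^{\heartsuit})^{\omega}$, and Barwick's theorem of the heart identifies connective $\K$-theory.

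For part (ii), your inductive scheme via Bass delooping is essentially the same mechanism the paper uses (the paper phrases it as the localization sequence coming from $\mathrm{Perf}(\mathbb{S}[x] \text{ on } 0) \to \mathrm{Perf}(\mathbb{S}[x]) \to \mathrm{Perf}(\mathbb{S}[x^{\pm 1}])$ tensored with $\mathcal{C}^{\omega}$, and observes that $\K(\mathcal{C}^{\omega})$ is a retract of the first term mapping to zero in the second). The inductive step is fine in both approaches.

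The gap is in your base case. The paper does \emph{not} attempt to prove $\K_{-1}(\mathcal{C}^{\omega}) = 0$ by a surjectivity argument on $\K_0$; it invokes \cite[Th.~2.35]{AGH} directly, which is the main vanishing theorem of that paper: $\K_{-1}$ vanishes for any small stable $\infty$-category admitting a bounded $t$-structure. Your proposed route reduces $\K_{-1} = 0$ to the surjectivity of
\[
\K_0\bigl((\mathcal{C} \otimes \mathbb{S}[t])^{\heartsuit\omega}\bigr) \oplus \K_0\bigl((\mathcal{C} \otimes \mathbb{S}[t^{-1}])^{\heartsuit\omega}\bigr) \longrightarrow \K_0\bigl((\mathcal{C} \otimes \mathbb{S}[t^{\pm 1}])^{\heartsuit\omega}\bigr),
\]
but this surjectivity is exactly as hard as the vanishing you want: it amounts to extending compact heart objects across the ``origin'', which in the abstract stably regular coherent setting is not available for free and is essentially the content of the Antieau--Gepner--Heller argument. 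Your citation to \cite[Cor.~3.17]{AGH} is also off: that corollary is the Hilbert basis theorem (noetherian hearts remain regular coherent under polynomial extension), not the $\K_0$-surjectivity or $\K_{-1}$-vanishing you need. The fix is simply to invoke \cite[Th.~2.35]{AGH} for the base case, after which your induction runs exactly as you describe.
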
 
\begin{proof} 
Our assumptions yield that $\mathcal{C}^{\omega}$ admits a bounded $t$-structure
with heart $\mathcal{C}^{\heart \omega}$. 
By the connective theorem of the heart \cite{Bar15}, it follows that 
$\K(\mathcal{C}^{\heart \omega}) = \K_{\geq 0}( \mathcal{C}^{\omega}) $. 
It remains to show that $\K_{-i}( \mathcal{C}^{\omega}) = 0$ for $i < 0$. This
is proved exactly as in 
the proof of 
\cite[Th.~3.6]{AGH}; we reproduce a sketch of the argument for the convenience of the
reader. 

We prove $\K_{i}( \mathcal{C}^{\omega}) = 0$ for $i < 0$ by
induction. 
First, by \cite[Th.~2.35]{AGH}, it follows that $\K_{-1}( \mathcal{C}^{\omega})
= 0$. Suppose $i < -1$. Consider the localization sequence in $\catst$ given by 
$\mathrm{Perf}( \mathbb{S}[x] \text{ on } 0 ) \to
\mathrm{Perf}(\mathbb{S}[x]) \to \mathrm{Perf}(\mathbb{S}[x^{\pm 1}])$. 
Tensoring with $\mathcal{C}^{\omega}$, we obtain another localization sequence
in $\catst$, leading to a fiber sequence
$$ \K( 
\mathrm{Perf}( \mathbb{S}[x] \text{ on } 0 ) \otimes \mathcal{C}^{\omega})
 \to \K( \mathrm{Perf}(\mathbb{S}[x]) \otimes \mathcal{C}^{\omega}) \to \K(
 \mathrm{Perf}( \mathbb{S}[x^{\pm 1}]) \otimes \mathcal{C}^{\omega}).
$$
As in the proof of \cite[Th.~3.6]{AGH}, it follows that 
$\K(\mathcal{C}^{\omega})$ is a retract of 
$\K(\mathrm{Perf}( \mathbb{S}[x] \text{ on } 0) \otimes \mathcal{C}^{\omega})$
and that this summand maps to zero in $\K( \mathrm{Perf}( \mathbb{S}[x]) \otimes
\mathcal{C}^{\omega})$. 
In particular, it follows that $\K_i( \mathcal{C}^{\omega})$ is a summand of
a quotient of $\K_{i+1}( \mathrm{Perf}( \mathbb{S}[x^{\pm 1}]) \otimes
\mathcal{C}^{\omega})$. However, 
our hypotheses imply that 
$\mathrm{Perf}( \mathbb{S}[x^{\pm 1}]) \otimes \mathcal{C}^{\omega}$ is the
subcategory of 
compact objects in 
$\mathcal{C} \otimes \mathbb{S}[x^{\pm 1}]$, which is stably regular coherent by
assumption. 
Inductively, it follows 
$\K_{i+1}( \mathrm{Perf}( \mathbb{S}[x^{\pm 1}] )\otimes \mathcal{C}^{\omega}) =
0$, whence $\K_i(\mathcal{C}^{\omega}) =0 $ as desired. 
\end{proof} 

\begin{corollary} 
\label{KisKHwhenboundedt}
Let $\mathcal{C}$ be a presentable $H\mathbb{Z}$-linear stable $\infty$-category
equipped with a $t$-structure which is compatible with filtered colimits, and
suppose $\mathcal{C}$ is stably regular coherent. Then $\K(\mathcal{C}^{\omega})
\xrightarrow{\sim} \KH( \mathcal{C}^{\omega})$. 
\end{corollary}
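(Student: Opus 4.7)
The plan is to prove the $\mathbb A^1$-invariance statement
\[\K(\mathcal C^\omega)\xto{\sim} \K\bigl(\mathcal C^\omega\otimes_{\mathrm{Perf}(H\mathbb Z)}\mathrm{Perf}(\mathbb Z[t])\bigr),\]
from which the corollary follows: $\KH(\mathcal C^\omega)$ is by definition the geometric realization of the simplicial spectrum $[n]\mapsto \K(\mathcal C^\omega\otimes\mathrm{Perf}(\mathbb Z[\Delta^n]))$, and iterating the above equivalence (applied to the stably regular coherent categories $\mathcal C\otimes\mathrm{Perf}(\mathbb Z[t_1,\dots,t_n])$) makes this simplicial spectrum essentially constant with value $\K(\mathcal C^\omega)$.

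The key reduction uses the theorem of the heart (\Cref{thmofheart}). First observe that $\mathcal C\otimes\mathrm{Perf}(\mathbb Z[t])$ inherits all the hypotheses of the corollary: it is presentable $H\mathbb Z$-linear with a compatible $t$-structure, and is stably regular coherent, since $(\mathcal C\otimes\mathrm{Perf}(\mathbb Z[t]))\otimes\mathrm{Perf}(\mathbb Z[t_1,\dots,t_n])\simeq \mathcal C\otimes\mathrm{Perf}(\mathbb Z[t,t_1,\dots,t_n])$ is regular coherent by assumption on $\mathcal C$. Applying \Cref{thmofheart} to both categories then identifies their $\K$-theories with the connective $\K$-theories of the respective hearts $\mathcal A:=(\mathcal C^\heartsuit)^\omega$ and $\mathcal B:=((\mathcal C\otimes\mathrm{Perf}(\mathbb Z[t]))^\heartsuit)^\omega$. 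The problem therefore reduces to showing that the exact functor $\mathcal A\to\mathcal B$, $M\mapsto M\otimes_{\mathbb Z}\mathbb Z[t]$, induces an equivalence on connective $\K$-theory.

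This final assertion is the natural categorical generalization of Quillen's classical theorem that $\K(R)\xto\sim \K(R[t])$ for a regular Noetherian ring $R$, and my plan is to transcribe his argument. One exhibits, via a Hilbert-basis-style degree filtration on objects of $\mathcal B$, a finite resolution of every $N\in\mathcal B$ by ``free'' objects of the form $M\otimes_{\mathbb Z}\mathbb Z[t]$ with $M\in\mathcal A$; termination of the iterative construction is ensured by the finite flat dimension built into the regular coherent hypothesis on $\mathcal C$. Quillen's resolution theorem then delivers the required equivalence of connective $\K$-spectra.

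The main obstacle is precisely exhibiting this finite $\mathcal A$-resolution. An object $N\in\mathcal B$, although compact as a $\mathbb Z[t]$-module object in $\mathcal C^\heartsuit$, need not be compact in $\mathcal C^\heartsuit$ itself (compare $\mathbb Z[t]$ as a $\mathbb Z$-module). Classically one constructs the resolution by choosing a surjection onto $N$ from a free $\mathbb Z[t]$-module and inductively applying a degree filtration, but carrying this out purely at the abstract abelian-heart level, without reference to a specific compact generator of $\mathcal C^\heartsuit$, is the technical crux. It is also exactly the step at which the regular coherent and finite flat-dimensional hypotheses on $\mathcal C$ enter decisively, so any cleaner packaging should still rely on these.
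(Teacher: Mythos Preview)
Your reduction to the $\mathbb{A}^1$-invariance statement and the application of \Cref{thmofheart} to both $\mathcal{C}$ and $\mathcal{C}\otimes\mathrm{Perf}(\mathbb{Z}[t])$ are correct and match the paper's opening move. However, the paper does \emph{not} attempt to compare the hearts directly via a resolution argument; instead it runs the standard $\mathbb{P}^1$-trick. One base-changes the localization sequence $\mathrm{Perf}(\mathbb{P}^1_{\mathbb{Z}}\text{ on }\infty)\to\mathrm{Perf}(\mathbb{P}^1_{\mathbb{Z}})\to\mathrm{Perf}(\mathbb{A}^1_{\mathbb{Z}})$ along $\mathcal{C}^\omega$, identifies the first term with $\K(\mathcal{C}^\omega)$ using \Cref{thmofheart} on the $y$-torsion subcategory of $\mathcal{C}\otimes\mathbb{Z}[y]$ together with Quillen d\'evissage on its heart (objects of $\mathcal{C}^{\heartsuit\omega}$ with a nilpotent endomorphism), and then combines this with the projective bundle formula $\K(\mathcal{C}^\omega)^{\oplus 2}\xrightarrow{\sim}\K(\mathcal{C}^\omega\otimes\mathrm{Perf}(\mathbb{P}^1_{\mathbb{Z}}))$ to conclude.

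The difference matters because the step you flag as ``the main obstacle'' is a genuine gap in your approach, not a mere technicality. To run Quillen's resolution theorem you would need every object of $\mathcal{B}$ to admit a finite resolution by objects in the essential image of $M\mapsto M\otimes_{\mathbb{Z}}\mathbb{Z}[t]$, and moreover that this image is closed under kernels of epimorphisms in $\mathcal{B}$. The classical degree-filtration argument supplying such resolutions uses the explicit grading on $R[t]$-modules and the ability to choose generators; in an abstract abelian heart with no distinguished projective generator there is no evident substitute, and ``finite flat dimension'' for $\mathcal{C}$ gives you boundedness of the $t$-structure on $\mathcal{C}^\omega$, not a resolution of $\mathcal{B}$-objects by extended ones. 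The $\mathbb{P}^1$-argument bypasses all of this: the only d\'evissage input is on the nilpotent-endomorphism category, where it is immediate, and the rest is formal (Thomason--Trobaugh localization plus the projective bundle formula, both of which base-change cleanly along $\otimes\,\mathcal{C}^\omega$). You should replace your resolution plan with this argument.
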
 
\begin{proof} 
It suffices to show that $\K( \mathcal{C}^{\omega}) \xrightarrow{\sim} \K( (
\mathcal{C} \otimes_{\mathbb{Z}} \mathbb{Z}[x])^{\omega})$, and the classical proof in
algebraic $\K$-theory works. 
We switch now to geometric notation (writing schemes instead of rings). 
By our assumptions above, we have a fiber sequence
\begin{equation} \label{devA1gm} \K( \mathcal{C}^{\omega}) \to \K(\mathrm{Perf}(
\mathbb{P}^1_{\mathbb{Z}})  \otimes_{\mathbb{Z}} \mathcal{C}^{\omega} ) \to \K(
\mathrm{Perf}(\mathbb{A}^1_{\mathbb{Z}}) \otimes_{\mathbb{Z}} 
\mathcal{C}^\omega 
), \end{equation}
where the first map is obtained from the pushforward $\perf( \mathbb{Z}) \to \perf(
\mathbb{P}^1_{\mathbb{Z}})$ at the section $\infty$. 
Indeed, this follows from Thomason--Trobaugh localization \cite{TT90} and 
\Cref{thmofheart}, since 
$
\perf( \mathbb{P}^1_{\mathbb{Z}} \text{ on } \infty ) \otimes_{\mathbb{Z}}
\mathcal{C}^{\omega} $ 
is the compact objects of the stably regular coherent 
$\infty$-category of $y$-torsion objects in $\mathcal{C} \otimes_{\mathbb{Z}}
\mathbb{Z}[y]$ (for $y$ a coordinate near $\infty$ on
$\mathbb{P}^1_{\mathbb{Z}}$). 
In particular, this gives a bounded $t$-structure on 
$
\perf( \mathbb{P}^1_{\mathbb{Z}} \text{ on } \infty ) \otimes_{\mathbb{Z}}
\mathcal{C}^{\omega} $
whose heart is the category of objects in $\mathcal{C}^{\heartsuit \omega}$ with a nilpotent
endomorphism; now use Quillen d\'evissage 
\cite[Sec.~5]{Qui72}
to identify its $\K$-theory
with that of $\mathcal{C}^{\omega}$. 

Now we have two maps $f_1, f_2 \colon \mathcal{C}^{\omega} \to 
\mathrm{Perf}(\mathbb{P}^1_{\mathbb{Z}}) \otimes_{\mathbb{Z}} 
\mathcal{C}^{\omega} 
$ given by tensoring with the structure sheaf and
given by pushing forward at $\infty$. By the projective bundle formula for
$\mathbb{P}^1_{\mathbb{Z}}$, these establish an equivalence 
$(f_1, f_2) \colon \K( \mathcal{C}^{\omega})^{\oplus 2} \xrightarrow{\sim} \K(
\mathcal{C}^{\omega} \otimes_{\mathbb{Z}} \mathrm{Perf}( \mathbb{P}^1_{\mathbb{Z}}))$. 
Combining this with \eqref{devA1gm}, we find that 
pullback induces an equivalence $\K( \mathcal{C}^\omega) \xrightarrow{\sim} \K(
\mathcal{C}^\omega \otimes_{\mathbb{Z}} \mathrm{Perf}( \mathbb{A}^1_{\mathbb{Z}}))$ as
desired. 
\end{proof}

Finally, we explain how to deduce \Cref{Quillendevcoherent} and  a
generalization of 
\Cref{locregdivisor}. 

\begin{proof}[Alternative proof of \Cref{Quillendevcoherent}]
By Thomason--Trobaugh localization \cite{TT90}, the fiber of $\K(R) \to \K(
\spec R \setminus V(I))$ is given by the $\K$-theory of the stable $\infty$-category 
$\mathrm{Perf}(R \text{ on } I)$ of 
perfect $R$-module spectra which are $I$-power torsion. 
These are the compact objects in the presentable stable
$\infty$-category $\mathcal{D}(R)_{I-\mathrm{tors}} \subset \mathcal{D}(R)$ of
$I$-power torsion objects.  
Our assumption implies that 
the usual $t$-structure
on 
$\mathcal{D}(R)_{I-\mathrm{tors}}$
is stably regular coherent in the above sense. 
In particular, $\mathrm{Perf}(R \text{ on } I)$
has a bounded $t$-structure 
with heart given by the abelian category $\modfg(R)_{I-\mathrm{tors}}$ of finitely presented $R$-modules which are
$I$-power torsion. 
It follows from \Cref{thmofheart} and Quillen d\'evissage \cite[Sec.~5]{Qui72} that 
$\G(R/I) \simeq \K( \modfg(R)_{I-\mathrm{tors}}) \simeq \K
(\mathrm{Perf}(R \text{ on } I))$. 
\end{proof} 

\begin{proposition} \label{prop:torsion}
Let $R$ be a ring and let $I \subset R$ be a finitely generated regular ideal. Suppose
$R/I$ is stably coherent and weakly regular. 
Then there is a fiber sequence of spectra
\[ \K(R/I) \to \K(R) \to \K( \spec (R) \setminus V(I)),  \]
and similarly in $\KH$. 
\end{proposition}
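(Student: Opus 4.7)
The plan is to follow the alternative proof of \Cref{Quillendevcoherent} given earlier in the appendix, adapted to accommodate the weaker hypothesis that only $R/I$ (rather than $R$ itself) is stably coherent and weakly regular. By Thomason--Trobaugh \cite{TT90}, the fiber of $\K(R) \to \K(\spec(R) \setminus V(I))$ is $\K(\mathrm{Perf}(R \text{ on } I))$, where $\mathrm{Perf}(R \text{ on } I)$ denotes the compact objects of the presentable stable $\infty$-category $\mathcal{D}(R)_{I-\mathrm{tors}} \subset \mathcal{D}(R)$. The goal is to identify this fiber with $\K(R/I)$.

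I will apply the theorem of the heart (\Cref{thmofheart}) to $\mathrm{Perf}(R \text{ on } I)$, taking as candidate heart the abelian category $\modfg(R)_{I-\mathrm{tors}} = \bigcup_n \modfg(R/I^n)$ of finitely presented $I$-power torsion $R$-modules. For this to apply, $\mathcal{D}(R)_{I-\mathrm{tors}}$ must be stably regular coherent. The key inputs are: (i) regularity of $I$, which via the Koszul resolution implies that $R/I$ (and hence each $R/I^n$) has finite Tor-dimension over $R$; (ii) coherence of $R/I$ together with the fact that $I^k/I^{k+1}$ is a finitely generated free $R/I$-module (as $I$ is generated by a regular sequence), which allows an inductive argument extending \cite[Lem.~3.26]{BMS1} to show each $R/I^n$ is coherent and that $\modfg(R)_{I-\mathrm{tors}}$ is an abelian subcategory of $\mathrm{Mod}(R)$; (iii) weak regularity of $R/I$, which via the filtration of a finitely presented $I$-power torsion module by powers of $I$ (with subquotients finitely presented over $R/I$) ensures that every such module has bounded Tor-dimension over $R$, giving the required boundedness of the induced $t$-structure on compacts. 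Stability under polynomial extensions follows from \Cref{fflatpassesup} together with stable coherence of $R/I$, since $I[t_1, \dots, t_n]$ remains a finitely generated regular ideal of $R[t_1, \dots, t_n]$ whose quotient is $(R/I)[t_1,\dots,t_n]$.

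With this in hand, \Cref{thmofheart} identifies $\K(\mathrm{Perf}(R \text{ on } I))$ with the connective $\K$-theory of $\modfg(R)_{I-\mathrm{tors}}$. Applying Quillen d\'evissage \cite[Sec.~5]{Qui72} along the filtration by powers of $I$ identifies this further with $\K(\modfg(R/I)) = \G(R/I)$, and \Cref{stablycohfflat}(3) yields $\G(R/I) \simeq \K(R/I)$, giving the desired fiber sequence in $\K$-theory. For the $\KH$ version, the hypotheses on $(R,I)$ are preserved under $R \rightsquigarrow R[T_0, \dots, T_n]$ (the $R/I$ side by stable coherence and \Cref{fflatpassesup}; the ideal side because the same regular sequence generates it), so the $\K$-theory statement applies simplicial-degreewise and its geometric realisation gives the fiber sequence in $\KH$.

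The main obstacle is verifying the stably regular coherent hypothesis for $\mathcal{D}(R)_{I-\mathrm{tors}}$: specifically, extending \cite[Lem.~3.26]{BMS1} to show that $R/I^n$ is coherent for all $n$ under our hypotheses, and establishing that truncations of objects in $\mathrm{Perf}(R \text{ on } I)$ remain perfect over $R$. Both demand careful bookkeeping of how finiteness properties propagate from $R/I$-modules to $R/I^n$-modules and thence to general $I$-power torsion $R$-modules, but the required ingredients are in hand: Koszul perfection of $R/I$ as an $R$-module, coherence of $R/I$, and weak regularity of $R/I$.
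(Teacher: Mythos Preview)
Your proposal is correct and follows essentially the same route as the paper: Thomason--Trobaugh localization reduces to identifying $\K(\mathrm{Perf}(R \text{ on } I))$ with $\K(R/I)$, which is done by showing $\mathcal{D}(R)_{I-\mathrm{tors}}$ is stably regular coherent (via inductive coherence of $R/I^n$ using \cite[Lem.~3.26]{BMS1} and perfection over $R$ of objects in the heart), then applying the theorem of the heart and Quillen d\'evissage. The paper phrases $I^k/I^{k+1}$ as finitely generated \emph{projective} (via $\mathrm{Sym}^k_{R/I}(I/I^2)$) rather than free, and does not spell out the $\KH$ case---your degreewise-plus-geometric-realisation argument is the intended one.
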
 
\begin{proof} 
We have a fiber sequence
$\K( R \text{ on } I) \to \K(R) \to \K( \spec(R) \setminus V(I)) $. 
Thus, it suffices to show that 
$\K(R/I) \xrightarrow{\sim} \K( R \text{ on } I)$. 

Consider the presentable stable $\infty$-categories
$\mathcal{D}(R)_{I-\mathrm{tors}}$ of $I$-power torsion objects in
$\mathcal{D}(R)$ and $\mathcal{D}(R/I)$. 
Then $\perf(R \text{ on } I)$ is the compact objects in 
$\mathcal{D}(R)_{I-\mathrm{tors}}$ 
and $\perf(R/I)$ is the compact objects in $\mathcal{D}(R/I)$.  
We show that $\mathcal{D}(R)_{I-\mathrm{tors}}, \mathcal{D}(R/I)$ (with the
natural $t$-structures) are stably
regular coherent. For $\mathcal{D}(R/I)$, this is part of our assumption. 
Since the hypotheses of the result apply to a polynomial ring over $R$, it
suffices to show that 
$\mathcal{D}(R)_{I-\mathrm{tors}}$ is regular coherent, i.e., that it admits a
bounded $t$-structure.

First, since $I$ is a regular ideal, $I/I^2$ is a finitely generated projective
$R/I$-module, and $\mathrm{Sym}^i_{R/I}(I/I^2) \xrightarrow{\sim} I^i/I^{i+1}$
for each $i \geq 0$. 
We argue that $R/I^i$ is a coherent ring for each $i \geq 1$ by induction. For $i = 1$ this
is the assumption. If $R/I^{i}$ is coherent, 
then consider the short exact sequence
$0 \to I^i/I^{i+1} \to R/I^{i+1} \to R/I^i \to 0$. Our assumptions give that
$I^i/I^{i+1} $ is a finitely presented (indeed, finitely generated projective)
$R/I$-module, whence a finitely presented $R/I^i$-module \cite[Lem.~3.25(i)]{BMS1}. 
Therefore, by \cite[Lem.~3.26]{BMS1}, it follows that $R/I^{i+1}$ is coherent. 

It follows 
that we have an abelian category $\mathcal{A}$ of finitely presented $R$-modules
which are annihilated by $I^n$ for some $n \gg 0$ (it is then equivalent
to assuming they are finitely presented $R/I^n$-modules,
\cite[Lem.~3.25(i)]{BMS1}). 
We claim that the usual $t$-structure on 
$\mathcal{D}(R)_{I-\mathrm{tors}}$
restricts to a bounded $t$-structure on 
$\perf(R \text{ on } I)$ with heart $\mathcal{A}$. 
We observe that any object $M \in \mathcal{A}$ is perfect as an
$R$-module; this reduces to the case where $M$ is an $R/I$-module, when it
follows because $R/I$ is regular coherent and is perfect as an $R$-module by
regularity of $I$. 
This easily implies that the homology groups of any object of $\perf(R \text{ on
} I)$ belong to $\perf(R \text{ on } I)$, whence the claim.

Thus, we have shown that 
$\mathcal{D}(R)_{I-\mathrm{tors}}, \mathcal{D}(R/I)$ are stably regular
coherent. The hearts in the compact objects $\perf(R \text{ on } I)$ and
$\perf(R/I)$ are given by $\mathcal{A} $ and the category of finitely presented
$R/I$-modules respectively. Using Quillen d\'evissage and \Cref{thmofheart}, we
conclude. 
\end{proof} 

\begin{remark}
    Much of the argument in the proof of Proposition~\ref{prop:torsion} can be
    established instead with the recent main theorem of~\cite{burklund-levy}, which
    gives a criterion for when, for a coconnective ring spectrum $A$, the natural
    map $\pi_0(A)\rightarrow A$ induces an equivalence on $\K$-theory. The
    hypotheses of their theorem holds in the setting of the proposition for the
    natural map $R/I\rightarrow R\Map_R(R/I,R/I)$, the derived endomorphism
    spectrum of the $R$-module $R/I$. But,
    $\Perf(R\Map_R(R/I,R/I))\we\Perf(R\text{ on }I)$ by derived Morita theory.
\end{remark}

\small
\bibliographystyle{amsalpha}
\bibliography{perfectoidk}

\end{document}